\newif\ifdraft
\newcommand{\btwo}{best-of-two\xspace}
\newcommand{\bthree}{best-of-three\xspace}
\newcommand{\bk}{best-of-$k$\xspace}
\newcommand{\Bk}{Best-of-$k$\xspace}
\newcommand{\Maj}{Majority\xspace}
\newcommand{\qmaj}{quasi-majority\xspace}
\newcommand{\Qmaj}{Quasi-majority\xspace}
\newcommand{\fvoting}{functional voting\xspace}
\newcommand{\Fvoting}{Functional voting\xspace}
\renewcommand\Pr{\mathop{\mathbf{Pr}}}
\DeclareMathOperator{\E}{\mathbf{E}}
\newcommand{\Var}{\mathop{\mathbf{Var}}}
\def\defeq{\mathrel{\mathop:}=}
\def\prodp<#1>{\left \langle #1\right \rangle_{\pi}}
\newcommand\cons{\mathrm{cons}}
\newtheorem{theorem}{Theorem}[section]
\newtheorem{lemma}[theorem]{Lemma}
\newtheorem{corollary}[theorem]{Corollary}
\newtheorem{definition}[theorem]{Definition}
\newtheorem{claim}[theorem]{Claim}
\newtheorem{remark}[theorem]{Remark}
\crefname{equation}{}{}
\crefname{assumption}{Assumption}{Assumption}
\crefname{figure}{Figure}{Figure}
\crefname{enumi}{}{}
\title{Quasi-majority Functional Voting on Expander Graphs}
\author{Nobutaka Shimizu\thanks{The University of Tokyo, Japan. {\ttfamily nobutaka\_shimizu@mist.i.u-tokyo.ac.jp}} \and Takeharu Shiraga\thanks{Chuo University, Japan. {\ttfamily shiraga.076@g.chuo-u.ac.jp}}}
\date{\today}
\begin{document}

\maketitle

\begin{abstract}
Consider a distributed graph where each vertex holds one of two distinct opinions.
In this paper, we are interested in synchronous \emph{voting processes} where each vertex updates its opinion according to a predefined common local updating rule.
For example, each vertex adopts the majority opinion among
1)~itself and two randomly picked neighbors in \emph{best-of-two} or
2)~three randomly picked neighbors in \emph{best-of-three}. 
Previous works intensively studied specific rules including best-of-two and best-of-three individually.

In this paper, 
we generalize and extend previous works of best-of-two and best-of-three on expander graphs
by proposing a new model, \emph{quasi-majority functional voting}.
This new model contains best-of-two and best-of-three as special cases.
We show that, on expander graphs with sufficiently large initial bias, any quasi-majority functional voting
reaches consensus within $O(\log n)$ steps with high probability.  
Moreover, we show that, for any initial opinion configuration, any quasi-majority functional voting  on expander graphs with higher expansion (e.g., Erd\H{o}s-R\'enyi graph $G(n,p)$ with $p=\Omega(1/\sqrt{n})$) reaches consensus within $O(\log n)$ with high probability. 
Furthermore, we show that the consensus time is $O(\log n/\log k)$ of best-of-$(2k+1)$ for $k=o(n/\log n)$.

\ 

\noindent \textbf{Keywords}:  Distributed voting, consensus problem, expander graph, Markov chain
\end{abstract}

\section{Introduction}
Consider an undirected graph $G=(V,E)$ where each vertex $v\in V$ initially holds an opinion $\sigma\in \Sigma$ from a finite set $\Sigma$.
In \emph{synchronous voting process} (or simply, \emph{voting process}), in each round, every vertex communicates with its neighbors and then all vertices simultaneously update their opinions according to a predefined protocol. 
The aim of the protocol is to reach a \emph{consensus configuration}, i.e., a configuration where all vertices have the same opinion.
Voting process has been extensively studied in several areas including biology, network analysis, physics and distributed computing~\cite{CMP09,MNT14,Liggett85,FLM86,GK10,AABHBB11}.
For example, in distributed computing, voting process plays an important role in the consensus problem~\cite{FLM86,GK10}.

This paper is concerned with the {\em consensus time} of voting processes over {\em binary} opinions $\Sigma=\{0,1\}$.
Then voting processes have state space $2^V$.
A state of $2^V$ is called a \emph{configuration}.
The \emph{consensus time} is the number of steps needed to reach a consensus configuration.


\subsection{Previous works of specific updating rules}

In \emph{pull voting}, in each round, every vertex adopts the opinion of a randomly selected neighbor. 
This is one of the most basic voting process, which has been well explored in the past~\cite{NIY99,HP01, CEOR13, CR16, BGKMT16}.
In particular, the expected consensus time of this process has been extensively studied in the literature.
For example, Hassin and Peleg~\cite{HP01} showed that the expected consensus time is $O(n^3\log n)$ for all non-bipartite graphs and all initial opinion configurations, where $n$ is the number of vertices.
From the result of Cooper, Els\"{a}sser, Ono, and Radzik~\cite{CEOR13}, it is known that on the complete graph $K_n$, the expected consensus time is $O(n)$ for any initial opinion configuration.


In \emph{\btwo} (a.k.a.~\emph{2-Choices}), each vertex $v$ samples two random neighbors (with replacement) and, if both hold the same opinion, $v$ adopts the opinion.
Otherwise, $v$ keeps its own opinion.
Doerr, Goldberg, Minder, Sauerwald, and Scheideler~\cite{DGMSS11} showed that, on the complete graph $K_n$, the consensus time of \btwo is $O(\log n)$ with high probability\footnote{In this paper ``with high probability'' (w.h.p.) means probability at least $1-n^{-c}$ for a constant $c>0$.} for an arbitrary initial opinion configuration.
Since \btwo is simple and is faster than pull voting on the complete graphs, this model gathers special attention in distributed computing and related area~\cite{GL18,CER14,CERRS15,CRRS17,CNNS18,CNS19,SS19}.
%
There is a line of works that study \btwo on expander graphs~\cite{CER14,CERRS15,CRRS17}, which we discuss later.

In \emph{\bthree} (a.k.a.~\emph{3-Majority}), each vertex $v$ randomly selects three random neighbors (with replacement).
Then, $v$ updates its opinion to match the majority among the three.
It follows directly from Ghaffari and Lengler~\cite{GL18} that, on $K_n$ with any initial opinion configuration, the consensus time of \bthree is $O(\log n)$ w.h.p.
Kang and Rivera~\cite{KR19}
considered the consensus time of
\bthree on
graphs with large minimum degree
starting from a random initial
configuration.
Shimizu and Shiraga~\cite{SS19} showed that, for any initial configurations, \btwo and \bthree reach consensus in $O(\log n)$ steps w.h.p.~if the graph is an Erd\H{o}s-R\'enyi graph $G(n,p)$\footnote{Recall that the Erd\H{o}s-R\'enyi random graph $G(n,p)$ is a graph on $n$ vertices where each of possible $\binom{n}{2}$ vertex pairs forms an edge with probability $p$ independently.} of $p=\Omega(1)$.

\emph{\Bk} ($k\geq 1$) is a generalization of pull voting, \btwo and \bthree.
In each round, every vertex $v$ randomly selects $k$ neighbors (with replacement) and then if at least $\lfloor k/2 \rfloor+1$ of them have 
the same opinion, the vertex $v$ adopts it.
Note that the best-of-$1$ is equivalent to pull voting.
Abdullah and Draief~\cite{AD15} studied
a variant of 
\bk ($k\geq 5$ is odd) 
on a specific class
of sparse graphs
that includes $n$-vertex random $d$-regular graphs\footnote{An $n$-vertex random $d$-regular graph $G_{n,d}$ is a graph selected uniformly at random from the set of all labelled $n$-vertex $d$-regular graphs.} $G_{n,d}$
of $d=o(\sqrt{\log n})$ with a random initial configuration.
To the best of our knowledge,
\bk has not been studied explicitly
so far.

In \emph{Majority} (a.k.a.~\emph{local majority}), each vertex $v$ updates its opinion to match the majority opinion among the neighbors.
This simple model has been extensively studied in previous works~\cite{BCOTT16,Berger01,GZ18,Peleg98,Peleg02,Zehmakan18}.
For example, \Maj on certain families of graphs including the Erd\H{o}s-R\'enyi random graph~\cite{BCOTT16,Zehmakan18}, random regular graphs~\cite{GZ18}
have been investigated.
See~\cite{Peleg02} for further details.

\paragraph*{Voting process on expander graphs.} 
Expander graph gathers special attention in the context of Markov chains on graphs,
yielding a wide range of
theoretical applications.
A graph $G$ is $\lambda$-\emph{expander}
if $\max\{|\lambda_2|,|\lambda_n|\}\leq \lambda$,
where $1=\lambda_1\geq \lambda_2\geq\cdots \geq \lambda_n\geq -1$
are the eigenvalues of
the transition matrix $P$ 
of the simple random walk on $G$.
For example, an Erd\H{o}s-R\'enyi graph $G(n,p)$ of $p\geq (1+\epsilon)\frac{\log n}{n}$ for an arbitrary constant $\epsilon>0$ is $O(1/\sqrt{np})$-expander w.h.p.~\cite{C07}.
An $n$-vertex random $d$-regular graph $G_{n,d}$ of $3 \leq d \leq n/2$ is $O(1/\sqrt{d})$-expander~w.h.p.~\cite{CGJ18,TY19}.

Cooper~et~al.~\cite{CEOR13} showed that the expected consensus time of pull voting is $O(n/(1-\lambda))$ on $\lambda$-expander regular graphs for any initial configuration.
Compared to pull voting, the study of \btwo on general graphs seems much harder.
Most of the previous works concerning \btwo on expander graphs put some assumptions on the initial configuration.
Let $A$ denote the set of vertices of opinion $0$ and $B=V\setminus A$.
Cooper, Els\"asser, and Radzik~\cite{CER14} showed that, for any regular $\lambda$-expander graph, the consensus time is $O(\log n)$ w.h.p.~if $\bigl||A|-|B|\bigr|=\Omega(\lambda n)$.
This result was improved by Cooper, Els\"asser, Radzik, Rivera, and Shiraga~\cite{CERRS15}.
Roughly speaking, they proved that, on $\lambda$-expander graphs, the consensus time is $O(\log n)$ if 
$|d(A)-d(B)|=\Omega(\lambda^2 d(V))$, where $d(S)=\sum_{v\in S}\deg(v)$ denotes the
volume of $S\subseteq V$.
To the best of our knowledge, the worst case consensus time of \bk on expander graphs has not been studied.


\subsection{Our model}
In this paper, we propose a new class \emph{\fvoting} of voting process,
which contains many known voting processes as a special case.
Let $A\subseteq V$ be the set of vertices of opinion $0$ and $A'$ be the set in the next round.
Let $B=V\setminus A$ and $B'=V\setminus A'$.
For $v\in V$ and $S\subseteq V$, let $N(v)=\{w\in V:\{v,w\}\in E\}$ and $\deg_S(v)=|N(v)\cap S|$.

\begin{definition}[\Fvoting] \label{def:fVoting}
Let $f:\mathbb{R}\to \mathbb{R}$ be
a function satisfying $f([0,1])=[0,1]$ and $f(0)=0$.
A \emph{\fvoting with respect to $f$} is a synchronous voting process defined~as
\begin{align*}
\begin{aligned}
&\Pr[v\in A'] =
f\left(\frac{\deg_A(v)}{\deg(v)}\right)
\quad \text{if $v\in B$}, \\
&\Pr[v\in B'] = f\left(\frac{\deg_{B}(v)}{\deg(v)}\right) \quad \text{if $v\in A$}.
\end{aligned}
\end{align*}
We call the function $f$ a \emph{betrayal function} and the function 
\begin{align*}
    H_f(x)\defeq x\bigl(1-f(1-x)\bigr)+(1-x)f(x)
\end{align*}
an \emph{updating function}.
\end{definition}
Since $f(0)=0$, consensus configurations are absorbing states.
Hence the consensus time is well-defined\footnote{For disconnected graphs, there exist initial configurations that never reach consensus.
Henceforth, we are concerned with connected graphs.}.
%
The intuition behind the updating function $H_f$ is that, 
letting $\alpha=|A|/n$ and $\alpha'=|A'|/n$, on a complete graph $K_n$ (with self-loop), 
the \fvoting with respect to $f$ satisfies $\E[\alpha']=\frac{|A|}{n}\left(1-f\left(\frac{|B|}{n}\right)\right)+\frac{|B|}{n}f\left(\frac{|A|}{n}\right)= H_f(\alpha)$.


\Fvoting contains many existing models as special cases.
For example, pull voting, \btwo, and \bthree are functional votings with respect to $x$, $x^2$ and 
$3x^2-2x^3$, respectively.
In general, \bk is a \fvoting with respect to
\begin{align}
f_k(x)=\sum_{i=\lfloor k/2 \rfloor+1}^k \binom{k}{i} x^i (1-x)^{k-i}. \label{eq:bokbetrayal}
\end{align}
It is straightforward to check that $H_{f_k}(x)=f_k(x)$ if $k$ is odd and $H_{f_k}(x)=f_{k+1}(x)$ if $k$ is even.
\Maj is a functional voting with respect to
\begin{align}
    f(x)=\begin{cases}
    0 & \text{if $x<\frac{1}{2}$},\\
    \frac{1}{2} & \text{if $x=\frac{1}{2}$},\\
    1 & \text{if $x>\frac{1}{2}$}
    \end{cases} \label{eq:majoritybetrayal}
\end{align}
if a vertex adopts the random opinion when it meets the tie.

\paragraph*{\Qmaj \fvoting.}
%
In this paper, we focus on \fvoting with respect to $f$ satisfying the following property.
\begin{definition}[\Qmaj.]
\label{def:quasi-majority}
A function $f$ is \emph{\qmaj} if $f$ satisfies the following conditions.
\begin{enumerate}[label=(\arabic*)]
    \item\label{cond:fC2} $f$ is $C^2$,
    \item\label{cond:f1/2} $0<f(1/2)<1$,
    \item\label{cond:Hfx>x} $H_f(x)<x$ whenever $x\in(0,1/2)$.
    \item\label{cond:katamuki_center} $H'_f(1/2)>1$,
    \item\label{cond:katamuki_edge}$H'_f(0)<1$.
\end{enumerate}
A voting process is a \emph{\qmaj \fvoting} if it is a \fvoting with respect to a \qmaj function $f$.
\end{definition}
%
Note that $H_f(x)$ is symmetric (i.e., $H_f(1-x)=1-H_f(x)$) and thus the condition \cref{cond:Hfx>x} implies $H_f(x)>x$ for every $x\in (1/2,1)$.
Intuitively, the conditions \cref{cond:Hfx>x} to \cref{cond:katamuki_edge} ensure the drift 
towards consensus.
The conditions~\cref{cond:fC2} and \cref{cond:f1/2} are due to a technical reasons.

\begin{figure}[t]
\centering
\includegraphics[width=8cm]{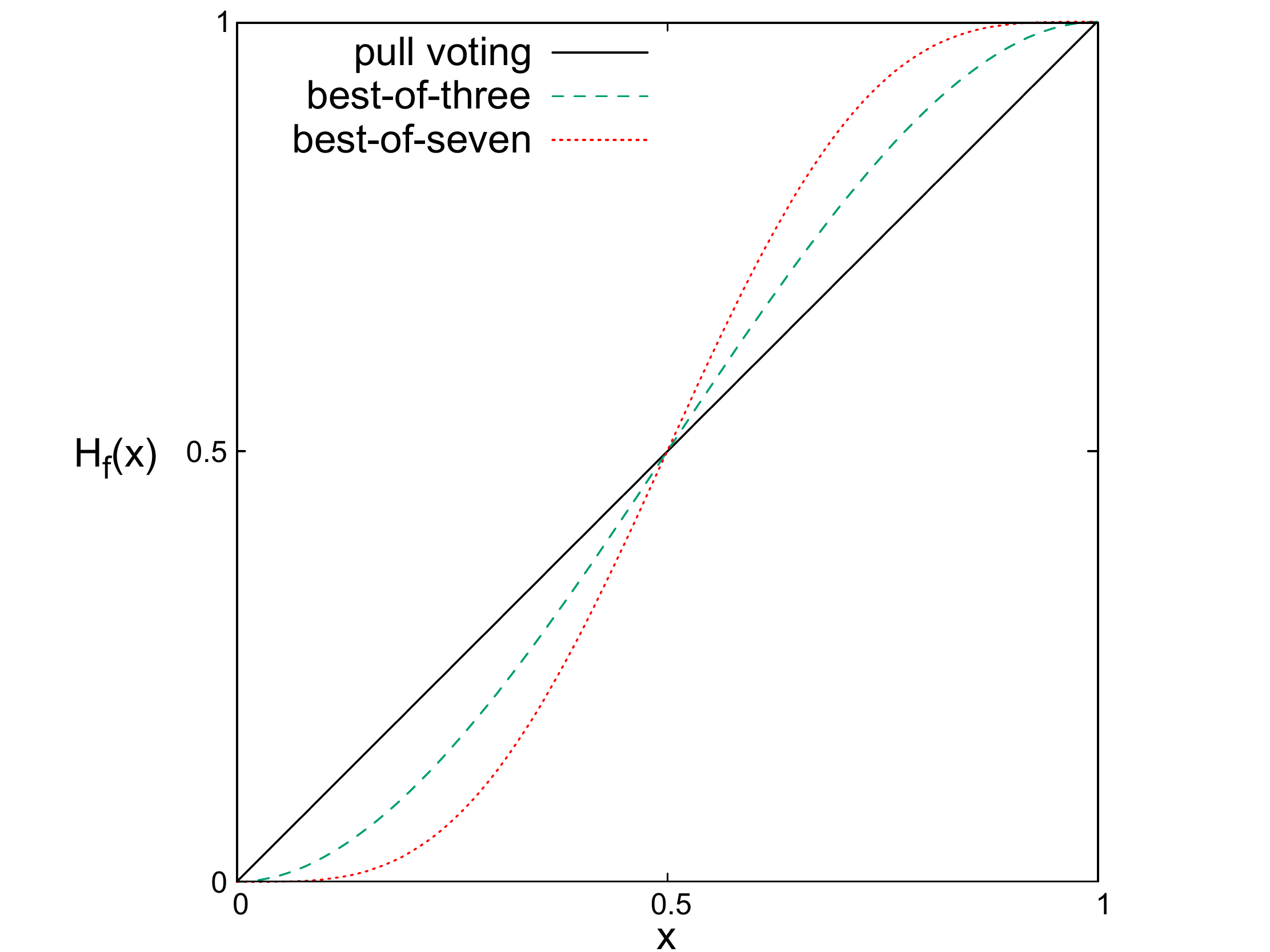}
\caption{The update functions $H_f(x)$ of pull voting (solid line), \bthree (dashed line) and best-of-seven (dotted line).
One can easily observe that \bthree and best-of-seven are \qmaj \fvoting.
Intuitively speaking, \qmaj \fvoting processes have updating functions $H_f$ with the property so-called ``the rich get richer", which coincides with \cref{def:quasi-majority}.
\label{fig:qmajplot}
}
\end{figure}

For each constant $k\geq 2$, \bk is
\qmaj \fvoting but pull voting and \Maj are not.
Indeed, if $H_{f_k}$ is the updating function of \bk, then
$
    H_{f_{2\ell}}'(x) = H_{f_{2\ell+1}}'(x) = (2\ell+1)\binom{2\ell}{\ell}x^\ell(1-x)^\ell.
$
It is straightforward to check that this function satisfies the conditions \cref{cond:Hfx>x} to \cref{cond:katamuki_edge} if $\ell\neq 0$ (pull-voting).
See \cref{fig:qmajplot} for depiction of the updating functions of pull voting, \bthree and best-of-seven.

\subsection{Our result}
In this paper, we study the consensus time of \qmaj \fvoting on expander graphs\footnote{Throughout the paper, we consider sufficiently large $n=|V|$.}.
Let $T_{\cons}(A)$ denote the consensus time starting from the initial configuration $A\subseteq V$.
For a graph $G=(V,E)$, let $\pi=(\pi(v))_{v\in V}$ denote the {\em degree distribution} defined as
\begin{align}
\pi(v)=\frac{\deg(v)}{2|E|}.\label{def:degreedist}
\end{align}
Note that $\sum_{v\in V}\pi(v)=1$ holds.
We denote by $\|x\|_p\defeq \left(\sum_{v\in V}|x_v|^p\right)^{1/p}$ the $\ell^p$ norm of $x\in \mathbbm{R}^V$.
For $\pi\in [0,1]^V$ and $A\subseteq V$, let $\pi(A)\defeq \sum_{v\in A}\pi(v)$. Let 
\begin{align*}
\delta(A)\defeq \pi(A)-\pi(V\setminus A) = 2\pi(A)-1
\end{align*}
denote the {\em bias} between $A$ and $V\setminus A$.

\begin{theorem}[Main theorem]
\label{thm:mainthm}
Consider a \qmaj \fvoting with respect to~$f$ on an $n$-vertex $\lambda$-expander graph with degree distribution $\pi$.
Then, the following holds:
\begin{enumerate}[label=(\roman*)]
    \item \label{thm:worstcasefvoting} Let $C_1>0$ be an arbitrary constant and $\varepsilon:\mathbb{N}\to\mathbb{R}$ be an arbitrary function satisfying $\varepsilon(n)\to 0$ as $n\to \infty$.
    Suppose that $\lambda \leq C_1n^{-1/4}$, $\|\pi\|_2 \leq C_1/\sqrt{n}$ and $\|\pi\|_3\leq \varepsilon / \sqrt{n}$.
    Then, for any $A\subseteq V$, $T_\cons(A)=O(\log n)$ w.h.p.
    \item \label{thm:cons_initial_bias} Let $C_2$ be a positive constant depending only on $f$.
    Suppose that $\lambda\leq C_2$ and $\|\pi\|_2\leq C_2/\sqrt{\log n}$. 
    Then, for any $A\subseteq V$ satisfying $|\delta(A)|\geq C_2\max\{\lambda^2,\|\pi\|_2\sqrt{\log n}\}$, $T_{\cons}(A)=O(\log n)$ w.h.p. 
\end{enumerate}
\end{theorem}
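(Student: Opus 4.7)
The plan is to track the bias $\delta(A_t)=2\pi(A_t)-1$ across rounds and decompose the dynamics by the size of $|\delta(A_t)|$. The technical workhorse is a one-step drift-and-concentration lemma. Since each vertex samples its own neighbors independently, $\pi(A_{t+1})=\sum_{v\in V}\pi(v)\mathbf{1}[v\in A_{t+1}]$ is a weighted sum of independent $\{0,1\}$ indicators, so Hoeffding's inequality gives $|\pi(A_{t+1})-\E[\pi(A_{t+1})\mid A_t]|=O(\|\pi\|_2\sqrt{\log n})$ w.h.p. For the drift, I would Taylor-expand $f\in C^2$ around $\pi(A_t)$, sum against $\pi(v)$, and use the expander mixing lemma to control both the linear error (whose prefactor $f'(\pi(A_t))-f'(\pi(B_t))$ is $O(|\delta(A_t)|)$) and the quadratic error $\sum_v\pi(v)(\deg_A(v)/\deg(v)-\pi(A_t))^2\leq \lambda^2\pi(A_t)\pi(B_t)$, obtaining $\E[\pi(A_{t+1})\mid A_t]=H_f(\pi(A_t))+O(\lambda|\delta(A_t)|+\lambda^2)$.

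Granting this one-step lemma, part (ii) proceeds in three sub-phases. In the \emph{amplification} phase, while $|\delta(A_t)|\in[C_2\max\{\lambda^2,\|\pi\|_2\sqrt{\log n}\},1/2]$, the quasi-majority conditions $H_f'(1/2)>1$ and $H_f(x)<x$ for $x<1/2$ yield $|2H_f(\pi(A_t))-1|\geq(1+\eta)|\delta(A_t)|$ for some constant $\eta>0$ that dominates the drift and noise errors; thus $|\delta(A_{t+1})|\geq(1+\eta/2)|\delta(A_t)|$ w.h.p., and $O(\log n)$ rounds push $|\delta|$ past any fixed constant. In the \emph{minority-contraction} phase, once WLOG $\pi(B_t)\leq 1/4$, the condition $H_f'(0)<1$ combined with $f(0)=0$ gives $\pi(B_{t+1})\leq(H_f'(0)+o(1))\pi(B_t)$ w.h.p., so $\pi(B_t)$ shrinks geometrically down to the noise floor $\Theta(\|\pi\|_2\sqrt{\log n})$ in $O(\log n)$ rounds. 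In the \emph{elimination} phase I switch from the weighted $\pi(B_t)$ to the unweighted $|B_t|$ and argue via a more refined expander estimate that almost every $v\in B_t$ has $\deg_A(v)/\deg(v)\geq 1-o(1)$, hence independently joins $A$ with probability $\geq 1-f(o(1))=1-o(1)$; a Chernoff bound then gives $|B_{t+1}|\leq c|B_t|$ w.h.p.\ for some constant $c<1$, so $B_t=\emptyset$ within $O(\log n)$ further rounds.

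Part (i) requires an additional \emph{symmetry-breaking} phase prepended to the above. From any $A_0$ with $\pi(A_0)\in[1/4,3/4]$ (otherwise $|\delta(A_0)|$ is already a constant and we start directly from minority contraction), the condition $0<f(1/2)<1$ together with $\|\pi\|_2=\Theta(1/\sqrt{n})$ forces $\Var[\pi(A_1)\mid A_0]=\Theta(1/n)$. The hypothesis $\|\pi\|_3\leq \varepsilon/\sqrt{n}$ controls the fourth central moment via $\|\pi\|_4^4\leq \|\pi\|_\infty \|\pi\|_3^3=o(\|\pi\|_2^4)$, and a Paley--Zygmund bound then gives $|\delta(A_1)|\geq c/\sqrt{n}$ with probability $1-o(1)$. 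Under the hypotheses of (i) the amplification threshold is only $O(\sqrt{(\log n)/n})$, so $O(\log\log n)$ amplification steps bridge the gap and the analysis of (ii) takes over. The main obstacle I anticipate is the elimination phase: drift and noise are comparable there, so the $\pi$-weighted recursion becomes uninformative and the argument must descend to individual minority vertices, requiring the expander property at each vertex's neighborhood rather than only in aggregate; the fourth-moment calculation driving symmetry breaking is a secondary technical hurdle.
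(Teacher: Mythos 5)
Your overall phase decomposition (one-step drift lemma via Taylor expansion and expander mixing, then amplification using $H_f'(1/2)>1$, contraction using $H_f(x)<x$ and $H_f'(0)<1$) matches the paper's Lemma 4.1, and the drift estimate $\E[\pi(A')]=H_f(\pi(A))+O(\lambda|\delta|+\lambda^2)$ is exactly Lemma 3.5. However, your symmetry-breaking argument for part (i) has a genuine gap. Paley--Zygmund (or Berry--Esseen, which the paper uses) only shows that $|\delta(A_1)|\geq c/\sqrt{n}$ with probability bounded away from zero by a \emph{constant} --- not $1-o(1)$ --- since the relevant second-moment-over-fourth-moment ratio is $\Theta(1)$. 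Worse, in the intermediate regime $c/\sqrt{n}\leq|\delta|\leq C\sqrt{\log n/n}$, the one-step multiplicative growth $|\delta'|\geq(1+\eta)|\delta|$ holds only with probability $1-\exp(-\Theta(n\delta^2))$, which is again merely a constant when $\delta=\Theta(1/\sqrt{n})$; so your claim that ``$O(\log\log n)$ amplification steps bridge the gap'' w.h.p.\ is false --- each step can fail and push the bias back toward zero with constant probability. Handling this is the hard part of the theorem: the paper invokes Lemma 4.5 of [CGGNPS18], which combines the constant-probability escape to $\Omega(1/\sqrt{n})$ with the constant-probability-of-failure amplification via a potential-function argument (tracking $\E[\exp(-\Psi(R_i)/\sqrt{n})]$ over repeated escape attempts) to conclude that $|\delta|\geq C\log n/\sqrt{n}$ within $O(\log n)$ rounds w.h.p. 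Your proposal contains no substitute for this mechanism.

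Your elimination phase also fails as stated. Spectral expansion gives only the aggregate bound $\sum_{v}\pi(v)(P(v,B)-\pi(B))^2\leq\lambda^2\pi(B)(1-\pi(B))$, so it cannot guarantee that \emph{every} (or even every non-$\pi$-negligible) vertex of $B_t$ has $\deg_{A}(v)/\deg(v)\geq 1-o(1)$; individual vertices may have most neighbors inside $B_t$. Moreover, even granting independence, a Chernoff bound cannot give $|B_{t+1}|\leq c|B_t|$ w.h.p.\ once $|B_t|=O(\log n)$, and you must survive $\Theta(\log n)$ such rounds. The paper sidesteps both issues by never leaving the weighted quantity: it shows $\E[\pi(A')]\leq K\pi(A)$ with $K<1$ whenever $\pi(A)\leq\alpha_*$, iterates this in expectation (conditioning on the w.h.p.\ event that $\pi(A_t)$ stays below $\alpha_*$) to get $\E[\pi(A_t)]=O(n^{-3})$ after $O(\log n)$ steps, and then applies Markov's inequality together with $\pi_{\min}\geq n^{-2}$ to conclude $\Pr[A_t\neq\emptyset]=O(1/n)$. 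You should replace your vertex-by-vertex Chernoff argument with this expectation-plus-Markov endgame.
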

The following result which we show in \cref{app:lower} indicates that the consensus time of \cref{thm:mainthm}\cref{thm:worstcasefvoting} is optimal up to a constant factor. 
\begin{theorem}[Lower bound] \label{thm:lowerboundconsensustime}
Under the same assumption of \cref{thm:mainthm}\cref{thm:worstcasefvoting}, $T_{\cons}(A) = \Omega(\log n)$ w.h.p.~for some $A\subseteq V$.
\end{theorem}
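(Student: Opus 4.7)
The plan is to exhibit an initial configuration whose bias is essentially as small as possible and show that it cannot be amplified from $O(n^{-1/2})$ up to $\Omega(1)$ in fewer than $\Omega(\log n)$ rounds. Since $\|\pi\|_\infty \leq \|\pi\|_2 = O(1/\sqrt n)$, a greedy construction produces $A_0 \subseteq V$ with $|\delta(A_0)| \leq \|\pi\|_\infty = O(1/\sqrt n)$. Write $\alpha_t = \pi(A_t)$ and $\delta_t = \alpha_t - 1/2$. Because $H_f$ is symmetric we have $H_f(1/2) = 1/2$, and by hypothesis $\gamma := H_f'(1/2) > 1$; fix a small $\epsilon > 0$ and put $\gamma^\ast = \gamma + \epsilon$.

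The main ingredient is a one-step drift inequality: for a suitable small constant $\eta > 0$, on the event $|\delta_t| \leq \eta$, with probability at least $1 - n^{-10}$,
\[
|\delta_{t+1}| \;\leq\; \gamma^\ast\,|\delta_t| \;+\; c_1\lambda \;+\; c_2 \|\pi\|_2 \sqrt{\log n}.
\]
This is obtained in three pieces. Let $x_v = \deg_{A_t}(v)/\deg(v) = (P \mathbbm{1}_{A_t})(v)$, where $P$ is the simple-random-walk transition matrix. The $\lambda$-expander assumption (equivalently, that $P$ contracts by $\lambda$ on the orthogonal complement of $\mathbf 1$) yields
\[
\sum_{v \in V} \pi(v)(x_v - \alpha_t)^2 \;\leq\; \lambda^2\,\alpha_t(1-\alpha_t) \;\leq\; \lambda^2/4,
\]
and combining this with Cauchy--Schwarz and the Lipschitz continuity of $f$ (implied by $f \in C^2$) gives $|\E[\alpha_{t+1} \mid A_t] - H_f(\alpha_t)| = O(\lambda)$. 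Since $H_f \in C^2$ with $H_f(1/2) = 1/2$, a Taylor expansion at $1/2$ gives $|H_f(\alpha_t) - 1/2| \leq \gamma^\ast |\delta_t|$ provided $|\delta_t| \leq \eta$. Finally, given $A_t$ the indicators $\mathbbm{1}_{v \in A_{t+1}}$ are independent, so Hoeffding's inequality applied to $\alpha_{t+1} = \sum_v \pi(v) \mathbbm{1}_{v \in A_{t+1}}$ yields $|\alpha_{t+1} - \E[\alpha_{t+1} \mid A_t]| = O(\|\pi\|_2 \sqrt{\log n})$ with the claimed probability.

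Iterating the drift inequality for $T = c \log n$ rounds and union-bounding the failure events gives, with probability $1 - o(1)$,
\[
|\delta_t| \;\leq\; \gamma^{\ast t}\,|\delta_0| \;+\; \frac{\gamma^{\ast t}}{\gamma^\ast - 1}\bigl(c_1 \lambda + c_2 \|\pi\|_2 \sqrt{\log n}\bigr) \;=\; O\bigl(\gamma^{\ast t}\,n^{-1/4}\bigr)
\]
for every $t \leq T$, where we used $|\delta_0| + \lambda + \|\pi\|_2 \sqrt{\log n} = O(n^{-1/4})$. Choosing $c = 1/(5 \log \gamma^\ast)$ makes the right-hand side $o(1)$ uniformly in $t \leq T$, so $|\delta_t| < 1/2$ (in particular $|\delta_t| \leq \eta$, retroactively validating the induction) and $A_t$ is not in consensus. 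Hence $T_\cons(A_0) \geq c \log n = \Omega(\log n)$ w.h.p.

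The step I expect to be most delicate is the expander-mixing estimate $|\E[\alpha_{t+1} \mid A_t] - H_f(\alpha_t)| = O(\lambda)$: the Lipschitz bound on $f$ must be an $n$-independent constant (ensured by $f \in C^2$ on the compact interval $[0,1]$), and the pointwise error $|f(x_v) - f(\alpha_t)|$ must be passed to a global bound via Cauchy--Schwarz against $\pi$, otherwise a naive $\ell^1$ aggregation would degrade the error to $O(1)$. Everything else---the Taylor approximation of $H_f$ near $1/2$, the Hoeffding concentration, and the geometric iteration---is routine given the quantitative assumptions $\lambda = O(n^{-1/4})$ and $\|\pi\|_2 = O(n^{-1/2})$.
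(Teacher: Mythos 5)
Your proposal is correct and follows essentially the same route as the paper's proof (Theorem~\ref{thm:newlower}): bound the one-step growth of the bias by a constant factor $>1$ plus an additive $O(\lambda+\|\pi\|_2\sqrt{\log n})$ error, apply Hoeffding, and iterate geometrically for $c\log n$ rounds so that an initially $O(n^{-1/2})$ bias stays $o(1)$. The only differences are cosmetic: you make the greedy construction of the low-bias initial set explicit (the paper leaves existence implicit), and you use a first-order Lipschitz/Cauchy--Schwarz estimate giving an $O(\lambda)$ expectation error where the paper's second-order Taylor estimate gives $O(\lambda^2+\lambda|\delta|)$ --- both suffice under $\lambda=O(n^{-1/4})$.
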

\begin{theorem}[Fast consensus for $H_f'(0)=0$]
\label{thm:fastconsensus}
Consider a \qmaj \fvoting with respect to $f$ on an $n$-vertex $\lambda$-expander graph with degree distribution $\pi$.
Let $C>0$ be a constant depending only on $f$. 
Suppose that $H_f'(0)=0$, $\lambda\leq C$ and $\|\pi\|_2\leq C/\sqrt{\log n}$. 
Then, for any $A\subseteq V$ satisfying $|\delta(A)|\geq C\max\{\lambda^2,\|\pi\|_2\sqrt{\log n}\}$, it holds w.h.p.~that
$$T_{\cons}(A)=O\left(\log \log n +\log |\delta(A)|^{-1} + \frac{\log n}{\log \lambda^{-1}}+ \frac{\log n}{\log (|\pi\|_2\sqrt{\log n})^{-1}}\right).$$ 
\end{theorem}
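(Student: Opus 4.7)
I would exploit the hypothesis $H_f'(0) = 0$: combined with $H_f(0) = 0$ and $H_f \in C^2$ (\cref{cond:fC2}), Taylor expansion gives $H_f(\alpha) \le K \alpha^2$ for a constant $K = K(f)$ and all $\alpha \in [0, 1/4]$, so near the absorbing states the expected drift is quadratic rather than linear as in the generic analysis behind \cref{thm:mainthm}\cref{thm:cons_initial_bias}. The proof then tracks the minority mass $\alpha_t \defeq \min\{\pi(A_t), 1 - \pi(A_t)\}$ through three phases, driven by a one-step recursion of the form
\begin{align*}
    \alpha_{t+1} \le H_f(\alpha_t) + \eta_t \quad \text{w.h.p.,}
\end{align*}
where the noise term $\eta_t$ is controlled by the expander mixing lemma for the conditional expectation together with a Chernoff/Bernstein-type concentration for the deviations around it.

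\textbf{Phases~1 and~2.} Phase~1 handles the regime $|\delta(A_t)|\in [C\max\{\lambda^2,\|\pi\|_2\sqrt{\log n}\},\,c^\star]$ for a small constant $c^\star>0$: condition~\cref{cond:katamuki_center} gives $H_f'(1/2)>1$, and the drift analysis underlying \cref{thm:mainthm}\cref{thm:cons_initial_bias} amplifies $|\delta(A_t)|$ by a multiplicative factor $1+c_1$ per round, so $\alpha_t\le 1/4$ is reached in $O(\log|\delta(A)|^{-1})$ rounds. Phase~2 starts at $\alpha_t\le 1/4$, where $H_f(\alpha_t)\le K\alpha_t^2$. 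Writing $\gamma=O(\lambda^2+\|\pi\|_2\sqrt{\log n})$ for the additive noise budget, while $\alpha_t\ge\sqrt{\gamma/K}$ the quadratic term dominates and $\alpha_{t+1}\le 2K\alpha_t^2$ w.h.p., so $\alpha_t$ decays doubly exponentially; this phase terminates in $O(\log\log n)$ rounds.

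\textbf{Phase~3 and main obstacle.} The technical heart is Phase~3, where $\alpha_t$ lies below the Phase~2 noise floor. The additive noise bound would stall the process at $\alpha\approx\sqrt{\gamma/K}$, so one must upgrade the recursion to one in which the noise itself scales multiplicatively with $\alpha_t$. The key observation is that $H_f'(0)=0$ together with $f([0,1])=[0,1]$ and $f(0)=0$ forces $f'(0)=0$ and $f(1)=1$, so for small minorities the per-vertex flip probability is pointwise $f(\deg_A(v)/\deg(v))=O((\deg_A(v)/\deg(v))^2)$ on the majority side and $1-f(\deg_B(v)/\deg(v))=O(\deg_A(v)/\deg(v))$ on the minority side. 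Combining these pointwise estimates with expander mixing for the conditional mean (yielding a correction of order $\lambda^2\alpha_t$ rather than $\lambda^2$) and a Bernstein- rather than Hoeffding-type concentration for the fluctuations (whose variance is $O(\alpha_t\|\pi\|_2^2)$ in the small-minority regime) should give
\begin{align*}
    \alpha_{t+1}\le \max\bigl(2K\alpha_t^2,\; c_2\lambda\alpha_t,\; c_2\|\pi\|_2\sqrt{\log n}\,\alpha_t\bigr) \quad \text{w.h.p.}
\end{align*}
Hence $\alpha_t$ contracts by a multiplicative factor $\gamma'=O(\lambda+\|\pi\|_2\sqrt{\log n})<1$ per round once it is small (the constant $C$ depending on $f$ is chosen small enough to guarantee $\gamma'<1$), and since $\pi(v)\ge 1/n^{O(1)}$ for every vertex the event $\alpha_t<1/n^{O(1)}$ forces $A_t=\emptyset$. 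This costs an additional $O(\log n/\log(\gamma')^{-1})=O(\log n/\log\lambda^{-1})+O(\log n/\log(\|\pi\|_2\sqrt{\log n})^{-1})$ rounds, and a union bound over the $O(\log n)$ total rounds of the three phases concludes. The main obstacle is precisely establishing this $\alpha_t$-multiplicative recursion: the additive noise bound that suffices for \cref{thm:mainthm}\cref{thm:cons_initial_bias} and for Phase~2 here must be sharpened on both the deterministic (expander mixing) and random (Bernstein concentration) sides so that each shrinks linearly in $\alpha_t$.
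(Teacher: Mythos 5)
Your phase structure tracks the paper's proof closely: the paper proves \cref{thm:fastconsensus} by concatenating Phases \cref{lab:phase2}, \cref{lab:phase3} and \cref{lab:phase5} of \cref{lem:mainlem}, and your Phases~1 and~2 are essentially Phases \cref{lab:phase2}--\cref{lab:phase3} (multiplicative amplification of $|\delta|$ via $H_f'(1/2)>1$, then constant-step descent) and Case~1 of Phase \cref{lab:phase5} (quadratic decay $\alpha'\leq \frac{7K(f)}{2}\alpha^2$ w.h.p., giving the $O(\log\log n)$ term). One remark on your "main obstacle": no sharpening of the expander-mixing error is actually needed, since \cref{lem:expectation} already gives $|\E[\alpha']-H_f(\alpha)|\leq K_2(f)\lambda(|\delta|+\lambda)\,\alpha(1-\alpha)=O(\lambda\alpha)$, i.e.\ the deterministic error term comes with the factor $\alpha(1-\alpha)$ for free.

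The genuine gap is in your Phase~3. You claim a per-step w.h.p.\ multiplicative contraction $\alpha_{t+1}\leq\gamma'\alpha_t$ with $\gamma'<1$ all the way down to $\alpha_t<n^{-O(1)}$, but such a bound cannot hold once $\alpha_t$ is polynomially small: with variance $\Theta(\alpha_t\pi_{\max})$ and increments bounded by $\pi_{\max}$, the best Bernstein/Chernoff tail at deviation $\gamma'\alpha_t$ is $\exp(-\Theta(\gamma'\alpha_t/\pi_{\max}))$, which is not $n^{-\Omega(1)}$ once $\alpha_t=o(\pi_{\max}\log n)$ (already $\alpha_t\approx\log n/n$ on regular graphs); below that floor a single vertex flipping the wrong way can exceed $\gamma'\alpha_t$, so the recursion fails with non-negligible probability and your union bound over $O(\log n)$ rounds breaks. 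The paper circumvents this in \cref{lem:consensus_lessthanone}: it establishes the contraction only \emph{in expectation}, $\E[\alpha']\leq K\alpha$ with $K=O(\lambda+\sqrt{\|\pi\|_2\sqrt{\log n}})$, uses Hoeffding solely to guarantee that the process stays below the threshold $\alpha_*$ (so the expectation recursion remains applicable), iterates to get $\E[\alpha_t]\leq K^t+O(t^2 n^{-4})=O(n^{-3})$ after $t=O(\log n/\log K^{-1})$ steps, and then applies Markov's inequality against $\pi_{\min}\geq n^{-2}$ to conclude $\alpha_t=0$ w.h.p. To repair your argument you would need to replace the claimed w.h.p.\ recursion in the sub-$\log n/n$ regime by exactly this kind of expectation-plus-Markov (or an optional-stopping/supermartingale) argument.
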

For example, for each constant $k\geq 2$, best-of-$k$ is quasi-majority with $H_f'(0)=0$.
\begin{remark}
Roughly speaking, for $p\geq 2$, $\|\pi\|_p$ measures the imbalance of the degrees.
For any graphs, $\|\pi\|_p\geq n^{-1+1/p}$ and the equality holds
if and only if the graph is regular.
For star graphs, we have $\|\pi\|_p\approx 1$.
\end{remark}
\paragraph*{Results of \bk.}
Our results above do not explore \Maj since it is not \qmaj.
A plausible approach is to consider \bk for $k=k(n)=\omega(1)$ since
each vertex is likely to choose
the majority opinion if the number of neighbor sampling increases.
Also, note that
the betrayal function $f_k$ of \bk given in \cref{eq:bokbetrayal} converges to that of \Maj (i.e., $f_k(x)\to f(x)$ as $k\to\infty$ for each $x\in[0,1]$, where $f$ is the betrayal function~\cref{eq:majoritybetrayal} of Majority).
On the other hand, if $k=O(1)$, there is a tremendous gap between \bk and \Maj:
For any \fvoting on the complete graph $K_n$, 
$T_{\cons}(A)=\Omega(\log n)$ for some $A\subseteq V$~from \cref{thm:lowerboundconsensustime}.
\Maj on $K_n$ reaches the consensus in a single step if $|A|<|V\setminus A|-1$.
This motivates us to consider \bk for $k=k(n)\to \infty$ as $n\to \infty$.
For simplicity,
we focus on best-of-$(2k+1)$ and prove the following result in \cref{sec:bokproof}.

\begin{theorem} \label{thm:bok}
Let $k=k(n)$ be such that $k=\omega(1)$ and $k=o(n/\log n)$.
Let $C$ be an arbitrary positive constant.
Consider best-of-$(2k+1)$ on an $n$-vertex $\lambda$-expander graph with degree distribution $\pi$ such that $\lambda\leq Ck^{-1/2}n^{-1/4}$, $\|\pi\|_2\leq Cn^{-1/2}$ and $\|\pi\|_3\leq C k^{-1/6}n^{-1/2}$.
Then, $T_{\cons}(A)=O\left(\frac{\log n}{\log k}\right)$ holds w.h.p.~for any $A\subseteq V$.
\end{theorem}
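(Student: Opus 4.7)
The plan is to refine the proof framework of \cref{thm:mainthm} for best-of-$(2k+1)$, tracking the dependence on $k$ of the updating function $H_{f_{2k+1}} = f_{2k+1}$. Two quantitative features of $H_{f_{2k+1}}$ drive the improvement over the generic $O(\log n)$ bound. Near the balanced state, $H_{f_{2k+1}}'(1/2) = (2k+1)\binom{2k}{k}/4^k = \Theta(\sqrt{k})$ by Stirling, so the expected bias is amplified by a factor $\Theta(\sqrt{k})$ per step. Near the absorbing states, $f_{2k+1}(x) = \Theta(x^{k+1})$ as $x \to 0$, so once the minority density is small, one step of the dynamics raises it to the $(k+1)$-st power, giving rapid polynomial contraction.

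The first technical step would be a one-step drift and variance estimate. Conditional on $A_t$, the indicators $\mathbf{1}[v \in A_{t+1}]$ are independent across $v \in V$. A standard expander-mixing computation shows that $\E[\pi(A_{t+1}) \mid A_t]$ equals $H_{f_{2k+1}}(\pi(A_t))$ up to an error controlled by $\lambda$ (and by derivatives of $f_{2k+1}$), while the conditional variance and higher central moments of $\pi(A_{t+1})$ are controlled by $\|\pi\|_2^2$ and $\|\pi\|_3^3$. The hypotheses $\lambda \le Ck^{-1/2}n^{-1/4}$ and $\|\pi\|_3 \le Ck^{-1/6}n^{-1/2}$ are tuned so that these error terms are asymptotically smaller than the drift $\Theta(\sqrt{k})|\delta_t|$ throughout the amplification regime $|\delta_t| = \Omega(n^{-1/2})$.

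The main argument then runs in three phases. In a symmetry-breaking phase, if $|\delta_0|$ is very small the one-step variance is $\Omega(\|\pi\|_2^2)$, so an anti-concentration argument pushes $|\delta_t|$ to $\Omega(n^{-1/2})$ within $O(1)$ steps w.h.p. In the amplification phase, iterating the drift estimate gives $|\delta_{t+1}| \ge c\sqrt{k}\,|\delta_t|$ whenever $|\delta_t|$ lies in a fixed sub-interval of $(0,1/2)$, so $|\delta_t|$ reaches a positive constant within $O(\log n / \log k)$ steps. In the consensus phase, once the minority fraction $\alpha_t$ is a small constant, the contraction $\alpha_{t+1} \le C_k \alpha_t^{k+1}$ iterated forces $\alpha_t$ below $1/n$ within a further $O(\log\log n / \log k)$ steps; a final Chernoff step drives the minority to zero. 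Summing the three phases yields $T_{\cons}(A) = O(\log n / \log k)$ w.h.p.

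The main obstacle is the amplification phase. Because the drift per step is only $\Theta(\sqrt{k}\,|\delta_t|)$, both the deterministic error in the drift estimate and the one-step random fluctuation must be smaller than the drift by a factor of order $\sqrt{k}$, uniformly down to $|\delta_t|$ as small as $\Theta(n^{-1/2})$. Since the higher derivatives of $H_{f_{2k+1}}$ grow polynomially in $k$ and enter the error estimate through Taylor expansion, matching these bounds is precisely what forces the $k$-dependent strengthening of the hypotheses on $\lambda$ and $\|\pi\|_3$ over \cref{thm:mainthm}\cref{thm:worstcasefvoting}. Given this uniform noise control, the iteration and Azuma-type concentration along each phase follow the template developed for the main theorem.
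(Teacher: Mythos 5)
Your overall phase decomposition and the two quantitative inputs you identify ($H_{f_{2k+1}}'(1/2)=\Theta(\sqrt{k})$ near the balanced state, $f_{2k+1}(x)\le (4x)^k$ near the absorbing states) match the paper's proof, but there are two genuine gaps. The first is the symmetry-breaking phase. You claim that because $\Var[\delta_{t+1}\mid A_t]=\Omega(\|\pi\|_2^2)$, ``an anti-concentration argument pushes $|\delta_t|$ to $\Omega(n^{-1/2})$ within $O(1)$ steps w.h.p.'' This does not follow: Berry--Esseen anti-concentration only shows that $|\delta_{t+1}|\ge h/\sqrt{nk}$ with probability bounded away from zero (in the paper, $1-C_2/\sqrt{k}$), and even after escaping to that level the subsequent $\sqrt{k}$-amplification fails with probability $\exp(-\Theta(h^2))$, a constant, at the bottom of the range; the process can fall back below the escape level. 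Turning constant-probability escape plus amplification into a w.h.p.\ statement is precisely the content of the paper's \cref{lem:nazolemma_bok} (a modification of Lemma~4.5 of~\cite{CGGNPS18}), which runs a potential-function/supermartingale argument over $\Theta(\log n/\log k)$ rounds; the conclusion is that $|\delta_t|\ge C\log n/\sqrt{n}$ within $O(\log n/\log k)$ steps, not $O(1)$. Your proposal is missing this mechanism entirely, and it is the heart of the worst-case (no initial bias) claim.

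The second gap is the endgame. You propose iterating $\alpha_{t+1}\le C_k\alpha_t^{k+1}$ down to $1/n$ and finishing ``with a final Chernoff step.'' But the one-step w.h.p.\ bound is $\alpha'\le f_{2k+1}(\alpha)+O(k\lambda^2)+O(\|\pi\|_2\sqrt{\log n})$, and the additive fluctuation term $\|\pi\|_2\sqrt{\log n}=\Theta(\sqrt{\log n/n})$ is a hard floor for any concentration-based argument: Hoeffding cannot distinguish $\alpha=0$ from $\alpha=\Theta(\sqrt{\log n/n})$, so the pointwise contraction stalls far above the extinction threshold $\pi_{\min}\ge n^{-2}$. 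The paper instead uses the linear contraction $f_{2k+1}(\alpha)\le \alpha/(4k)$ for $\alpha\le 0.1$ to get $\E[\pi(A_t)]\le (1/(4k)+0.4k\lambda^2)^t+n^{-3+o(1)}$ and finishes with Markov's inequality, $\Pr[A_t\neq\emptyset]\le n^2\E[\pi(A_t)]$. Some first-moment argument of this kind is needed to close your consensus phase. (A smaller imprecision: the multiplicative amplification $|\delta_{t+1}|\ge c\sqrt{k}|\delta_t|$ holds only for $|\delta_t|=O(1/\sqrt{k})$, not on a fixed sub-interval of $(0,1/2)$; the paper bridges from $1.25/\sqrt{k}$ to constant bias in a single step via the binomial tail bound $f_{2k+1}(1/2+\delta)\ge 1-\mathrm{e}^{-2k\delta^2}$.)
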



\subsection{Application} \label{sec:application}
Here, we apply our main theorem to specific graphs and derive some useful results.

For any $p\geq (1+\epsilon)\frac{\log n}{n}$ for an arbitrary constant $\epsilon>0$, $G(n,p)$ is connected and $O(1/\sqrt{np})$-expander w.h.p~\cite{C07,FK16}.
\begin{corollary}
\label{cor:Gnp}
Consider a \bk on an Erd\H{o}s-R\'enyi graph $G(n,p)$ for an arbitrary constant $k\geq 2$.
Then, $G(n,p)$ w.h.p.~satisfies the following:
\begin{enumerate}[label=(\roman*)]
    \item \label{lab:Gnp1} Suppose that $p =\Omega(n^{-1/2})$. Then
    \begin{enumerate}
        \item for any $A\subseteq V$, $T_\cons(A)=O(\log n)$ w.h.p.
        \item for some $A\subseteq V$, $T_\cons(A)=\Omega(\log n)$ w.h.p.
    \end{enumerate}
    \item \label{lab:Gnp2} 
    Suppose that $p\geq (1+\epsilon)\frac{\log n}{n}$ for an arbitrary constant $\epsilon>0$.
    Then, for any $A\subseteq V$ satisfying $|\delta(A)|\geq C\max\Bigl\{\frac{1}{np},\sqrt{\frac{\log n}{n}}\Bigr\}$, 
    $T_{\cons}(A)=O\left(\log \log n +\log |\delta(A)|^{-1} + \frac{\log n}{\log (np)} \right)$ w.h.p., 
    where $C>0$ is a constant depending only on $f$.
\end{enumerate}
\end{corollary}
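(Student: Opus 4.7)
The plan is to derive all three parts of the corollary by plugging standard spectral and degree concentration estimates for $G(n,p)$ into \cref{thm:mainthm} (for (i)(a)), \cref{thm:lowerboundconsensustime} (for (i)(b)), and \cref{thm:fastconsensus} (for (ii)). The two ingredients I will invoke about $G(n,p)$ are: first, whenever $p\geq (1+\epsilon)\log n/n$, the graph is connected and $\lambda$-expander with $\lambda=O(1/\sqrt{np})$ w.h.p.~\cite{C07,FK16}; second, a Chernoff plus union bound argument shows that every vertex has degree $(1\pm o(1))np$ w.h.p., so $2|E|=(1\pm o(1))n^2 p$ and $\pi(v)=(1\pm o(1))/n$ uniformly.

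For part~\cref{lab:Gnp1}(a), I first observe that best-of-$k$ for constant $k\geq 2$ is quasi-majority, as already checked right after \cref{def:quasi-majority}. From $p=\Omega(n^{-1/2})$ I get $\lambda=O(1/\sqrt{np})=O(n^{-1/4})$, and from the degree concentration $\|\pi\|_2=\Theta(1/\sqrt{n})$ and $\|\pi\|_3=\Theta(n^{-2/3})=n^{-1/6}\cdot(1/\sqrt{n})$. Setting $\varepsilon(n)=n^{-1/6}$ (which tends to $0$), the three hypotheses $\lambda\leq C_1n^{-1/4}$, $\|\pi\|_2\leq C_1/\sqrt{n}$, $\|\pi\|_3\leq \varepsilon(n)/\sqrt{n}$ of \cref{thm:mainthm}\cref{thm:worstcasefvoting} are all satisfied, so $T_{\cons}(A)=O(\log n)$ w.h.p.\ for every $A\subseteq V$. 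Part \cref{lab:Gnp1}(b) is then immediate from \cref{thm:lowerboundconsensustime} applied under the same assumptions.

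For part~\cref{lab:Gnp2}, I want to apply \cref{thm:fastconsensus} with $H_f'(0)=0$, which is valid because for every $k\geq 2$ one has $H_{f_k}'(0)=0$ by the derivative formula displayed after \cref{fig:qmajplot}. Under $p\geq (1+\epsilon)\log n/n$, the bounds $\lambda=O(1/\sqrt{np})=O(1/\sqrt{\log n})\leq C$ and $\|\pi\|_2=O(1/\sqrt{n})\leq C/\sqrt{\log n}$ hold w.h.p., so the precondition of \cref{thm:fastconsensus} is met. The bias assumption $|\delta(A)|\geq C\max\{\lambda^2,\|\pi\|_2\sqrt{\log n}\}$ becomes $|\delta(A)|\geq C\max\{1/(np),\sqrt{\log n/n}\}$. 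Finally I simplify the four-term bound: $\log\lambda^{-1}=\frac{1}{2}\log(np)+O(1)$, while $\log(\|\pi\|_2\sqrt{\log n})^{-1}=\frac{1}{2}\log(n/\log n)=\Theta(\log n)$, so $\log n/\log(\|\pi\|_2\sqrt{\log n})^{-1}=O(1)$ and is absorbed into $\log\log n$, yielding the stated $O(\log\log n+\log|\delta(A)|^{-1}+\log n/\log(np))$.

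There is no real obstacle: the corollary is a translation exercise. The only care-requiring step is checking that the Chernoff bound for edge counts in $G(n,p)$ gives simultaneous control over $\|\pi\|_2$ and $\|\pi\|_3$ with the right prefactors, and that the expansion bound of \cite{C07} applies in the full range $p=\Omega(n^{-1/2})$ needed in~\cref{lab:Gnp1} (which it does since this range is well above the $(1+\epsilon)\log n/n$ threshold). Everything else is substitution into the hypotheses of \cref{thm:mainthm,thm:lowerboundconsensustime,thm:fastconsensus}.
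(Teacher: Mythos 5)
Your proposal is correct and takes essentially the same route as the paper, which derives \cref{cor:Gnp} from \cref{cor:avedegree} (the condition $d_{\max}\leq C_1 d_{\mathrm{avr}}$, giving $\|\pi\|_p=\Theta(n^{-(1-1/p)})$) and then from \cref{thm:mainthm,thm:lowerboundconsensustime,thm:fastconsensus}; your direct substitution of $\lambda=O(1/\sqrt{np})$ and the $\ell^p$ norms of $\pi$ is the same computation without the intermediate corollary. One small caution: at $p=(1+\epsilon)\frac{\log n}{n}$ the degrees do \emph{not} concentrate two-sidedly to $(1\pm o(1))np$ (the minimum degree can be $o(np)$ for small constant $\epsilon$), but only the upper bound $d_{\max}=O(np)$, hence $\pi(v)=O(1/n)$, is needed to bound $\|\pi\|_2$ and $\|\pi\|_3$, so your argument for part~\cref{lab:Gnp2} still goes through.
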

In \cref{cor:Gnp}\cref{lab:Gnp1}, we stress that the worst-case consensus time on $G(n,p)$ was known for $p=\Omega(1)$~\cite{SS19}.
If $\frac{\log n}{\log (np)}=O(\log \log n)$ (or equivalently, $np=n^{\Omega(1/\log \log n)}$), 
\cref{cor:Gnp}\cref{lab:Gnp2} implies
$T_{\cons}(A)=O(\log \log n+\log |\delta(A)|^{-1})$ w.h.p.
%
\begin{corollary}
\label{cor:bokGnp}
Let $k=k(n)$ be such that $k=\omega(1)$ and $k=O(\sqrt{n})$. 
Consider best-of-$(2k+1)$ on $G(n,p)$ for $p=\Omega(k/\sqrt{n})$.
Then, for any $A\subseteq V$, $T_{\cons}(A)=O\left(\frac{\log n}{\log k}\right)$ holds w.h.p.
\end{corollary}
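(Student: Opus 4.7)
The plan is to deduce \cref{cor:bokGnp} directly from \cref{thm:bok} by verifying its three hypotheses—namely, the bounds on the spectral expansion $\lambda$, on $\|\pi\|_2$, and on $\|\pi\|_3$—for the Erd\H{o}s--R\'enyi graph $G(n,p)$ with $p=\Omega(k/\sqrt{n})$ and $k=\omega(1)$, $k=O(\sqrt{n})$. Note that $k=\omega(1)$ and $k=O(\sqrt{n})$ certainly satisfy $k=o(n/\log n)$, and $p=\Omega(k/\sqrt{n})=\omega(1/\sqrt{n})=\omega(\log n /n)$, so the relevant concentration and spectral results for $G(n,p)$ are applicable.

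First I would establish degree concentration: since $np=\omega(\log n)$, a standard Chernoff bound together with a union bound over the $n$ vertices gives $\deg(v)=(1\pm o(1))np$ for every $v\in V$ w.h.p. Consequently $2|E|=(1\pm o(1))n\cdot np$, and $\pi(v)=\deg(v)/(2|E|)=(1\pm o(1))/n$ uniformly in $v$. This immediately yields
\begin{align*}
\|\pi\|_2 = (1\pm o(1))\cdot n^{-1/2}=O(n^{-1/2}), \qquad \|\pi\|_3=(1\pm o(1))\cdot n^{-2/3}.
\end{align*}
For the third-norm hypothesis, we need $\|\pi\|_3\leq Ck^{-1/6}n^{-1/2}$, i.e., $n^{-2/3}\lesssim k^{-1/6}n^{-1/2}$, which is equivalent to $k\lesssim n$. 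Since $k=O(\sqrt{n})$, this is comfortably satisfied.

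Next I would handle the spectral gap. By the result of Chung et al.\ cited in the paper (\cite{C07}), $G(n,p)$ is $O(1/\sqrt{np})$-expander w.h.p.\ whenever $p\geq (1+\epsilon)\log n/n$, which again holds by our assumption on $p$. Plugging in $np=\Omega(k\sqrt{n})$ gives
\begin{align*}
\lambda = O\!\left(\tfrac{1}{\sqrt{np}}\right) = O\!\left(\tfrac{1}{\sqrt{k\sqrt{n}}}\right) = O\!\left(k^{-1/2}n^{-1/4}\right),
\end{align*}
which matches the hypothesis $\lambda\leq Ck^{-1/2}n^{-1/4}$ of \cref{thm:bok}.

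With all three hypotheses verified w.h.p.\ for $G(n,p)$, a direct application of \cref{thm:bok} yields $T_{\cons}(A)=O(\log n/\log k)$ w.h.p.\ for every $A\subseteq V$, conditioning on the high-probability event that the graph satisfies the required spectral and degree bounds. I do not foresee any real obstacle here: the work is essentially routine verification, and the only mildly subtle point is making sure the degree concentration is strong enough to bound $\|\pi\|_3$—but because $np=\omega(\sqrt{n})$ grows quickly, even a very tight Chernoff deviation holds simultaneously for all vertices w.h.p., and the bound $k=O(\sqrt{n})$ gives enough slack in the $\|\pi\|_3$ inequality.
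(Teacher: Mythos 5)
Your proposal is correct and follows essentially the same route as the paper: the paper derives \cref{cor:bokGnp} from \cref{cor:bok_avedegree} (which itself is \cref{thm:bok} plus the observation that $d_{\max}\leq Cd_{\mathrm{avr}}$ forces $\|\pi\|_p=\Theta(n^{-1+1/p})$), and your degree-concentration argument for $G(n,p)$ is exactly the verification of that bounded-degree-ratio condition, combined with the same spectral bound $\lambda=O(1/\sqrt{np})=O(k^{-1/2}n^{-1/4})$. The arithmetic checks out, including the slack $k=O(\sqrt{n})\Rightarrow n^{-2/3}\lesssim k^{-1/6}n^{-1/2}$ for the $\|\pi\|_3$ hypothesis.
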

From \mbox{\cref{cor:bokGnp}}, best-of-$n^{\epsilon}$ on $G(n,n^{-1/2+\epsilon})$ for any constant $\epsilon\in(0,1/2)$ reaches consensus in $O(1)$ steps.
It is known that \Maj on $G(n,Cn^{-1/2})$ satisfies $T_{\mathrm{cons}}(A)\leq 4$ for large constant $C$ and random $A\subseteq V$ with constant probability~\cite{BCOTT16}.

For $3 \leq d \leq n/2$, $n$-vertex random $d$-regular graph $G_{n,d}$ is
connected and
$O(1/\sqrt{d})$-expander w.h.p.~\cite{CGJ18,TY19}. 
\begin{corollary}
\label{cor:Gnd}
Consider a \bk on an $n$-vertex random $d$-regular graph $G_{n,d}$ for an arbitrary constant $k\geq 2$.
Then, $G_{n,d}$ w.h.p.~satisfies the following:
\begin{enumerate}[label=(\roman*)]
    \item 
    Suppose that $d =\Omega(n^{1/2})$ and $d\leq n/2$. Then, 
    \begin{enumerate}
        \item for any $A\subseteq V$, $T_\cons(A)=O(\log n)$ w.h.p.
        \item for some $A\subseteq V$, $T_\cons(A)=\Omega(\log n)$ w.h.p.
    \end{enumerate}
    \item 
    Suppose that $d\geq C$ and $d\leq n/2$ for a constant $C>0$ depending only on $f$.
    Then, for any $A\subseteq V$ satisfying $|\delta(A)|\geq C\max\Bigl\{\frac{1}{d},\sqrt{\frac{\log n}{n}}\Bigr\}$, it holds w.h.p.~that 
    $T_{\cons}(A)=O\left(\log \log n +\log |\delta(A)|^{-1} + \frac{\log n}{\log d} \right)$.
\end{enumerate}
\end{corollary}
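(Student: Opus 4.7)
The plan is to derive \cref{cor:Gnd} as a direct consequence of \cref{thm:mainthm}, \cref{thm:lowerboundconsensustime}, and \cref{thm:fastconsensus} by verifying that $G_{n,d}$ satisfies the required expansion and degree-distribution hypotheses. Since $G_{n,d}$ is $d$-regular, the degree distribution is uniform: $\pi(v)=1/n$ for every $v$, hence $\|\pi\|_2 = n^{-1/2}$ and $\|\pi\|_3 = n^{-2/3}$. Moreover, by the cited results of~\cite{CGJ18,TY19}, a random $d$-regular graph with $3\le d\le n/2$ is connected and $O(1/\sqrt{d})$-expander w.h.p., so we may fix $\lambda=O(1/\sqrt{d})$ in what follows.

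For part (i), assume $d=\Omega(n^{1/2})$ and $d\le n/2$. Then $\lambda = O(d^{-1/2}) = O(n^{-1/4})$, $\|\pi\|_2 = n^{-1/2}$, and $\|\pi\|_3 = n^{-2/3} = n^{-1/6}\cdot n^{-1/2}$, so we may take $\varepsilon(n)=n^{-1/6}\to 0$. All hypotheses of \cref{thm:mainthm}\cref{thm:worstcasefvoting} are satisfied, yielding $T_\cons(A)=O(\log n)$ w.h.p.\ for every $A\subseteq V$. The matching lower bound for some $A\subseteq V$ follows immediately from \cref{thm:lowerboundconsensustime} applied in the same regime.

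For part (ii), we use \cref{thm:fastconsensus}; recall that best-of-$k$ for constant $k\ge 2$ is \qmaj with $H_f'(0)=0$ (this is verified in the paper via the expression $H_{f_{2\ell+1}}'(x)=(2\ell+1)\binom{2\ell}{\ell}x^{\ell}(1-x)^{\ell}$, which vanishes at $x=0$ whenever $\ell\ge 1$). Choose the constant $C$ of the corollary large enough to dominate the constant of \cref{thm:fastconsensus} and the implicit constant in $\lambda=O(1/\sqrt{d})$. Then $d\ge C$ ensures $\lambda\le C$, while $\|\pi\|_2=1/\sqrt{n}\le C/\sqrt{\log n}$ holds for all large $n$. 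The hypothesis $|\delta(A)|\ge C\max\{1/d,\sqrt{\log n/n}\}$ in the corollary implies $|\delta(A)|\ge C\max\{\lambda^2,\|\pi\|_2\sqrt{\log n}\}$ up to adjusting $C$, since $\lambda^2=O(1/d)$ and $\|\pi\|_2\sqrt{\log n}=\sqrt{\log n/n}$. Applying \cref{thm:fastconsensus} gives
\begin{equation*}
T_\cons(A) = O\!\left(\log\log n + \log|\delta(A)|^{-1} + \frac{\log n}{\log \lambda^{-1}} + \frac{\log n}{\log(\|\pi\|_2\sqrt{\log n})^{-1}}\right)
\end{equation*}
w.h.p. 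Finally, $\log\lambda^{-1}=\Omega(\log d)$ and $\log(\|\pi\|_2\sqrt{\log n})^{-1}=\tfrac{1}{2}\log(n/\log n)=\Omega(\log n)$, so the last term is $O(1)$ and absorbed, yielding the stated bound $O(\log\log n + \log|\delta(A)|^{-1} + \log n/\log d)$.

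There is essentially no obstacle beyond bookkeeping: all heavy lifting is done by the three theorems cited, and the only non-trivial step is matching the quantities $\lambda^2$, $1/d$, and $\|\pi\|_2\sqrt{\log n}$ in the threshold on $|\delta(A)|$ and simplifying the resulting time bound. The mildly delicate point is that the constant $C$ in the statement of the corollary must be chosen to simultaneously cover (a) the constant depending on $f$ in \cref{thm:fastconsensus}, (b) the hidden constant in the expansion bound $\lambda=O(1/\sqrt{d})$ for $G_{n,d}$, and (c) the inequality $C/\sqrt{\log n}\ge 1/\sqrt{n}$ for large $n$; this is routine.
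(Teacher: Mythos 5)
Your proposal is correct and follows essentially the same route as the paper: the paper derives \cref{cor:Gnd} from \cref{cor:avedegree} (which itself is immediate from \cref{thm:mainthm,thm:lowerboundconsensustime,thm:fastconsensus} once one notes $\|\pi\|_p=\Theta(n^{-1+1/p})$ for near-regular graphs), whereas you simply inline that intermediate step by computing $\|\pi\|_2=n^{-1/2}$ and $\|\pi\|_3=n^{-2/3}$ directly for the $d$-regular case and invoking the $O(1/\sqrt{d})$-expansion of $G_{n,d}$. The constant-matching and the absorption of the term $\log n/\log(\|\pi\|_2\sqrt{\log n})^{-1}$ are handled correctly.
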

\begin{corollary}
\label{cor:bokGnd}
Let $k=k(n)$ be such that $k=\omega(1)$ and $k=O(\sqrt{n})$.
Consider best-of-$(2k+1)$ on an $n$-vertex random $d$-regular graph $G_{n,d}$ such that $d=\Omega(k\sqrt{n})$ and $d\leq n/2$.
Then, for any $A\subseteq V$, $T_{\cons}(A)=O\left(\frac{\log n}{\log k}\right)$ holds w.h.p.
\end{corollary}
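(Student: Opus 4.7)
My plan is to apply \cref{thm:bok} directly to $G_{n,d}$ once I verify that the three hypotheses on $\lambda$, $\|\pi\|_2$, and $\|\pi\|_3$ hold w.h.p.\ under the stated parameter regime. The situation is much cleaner than in the $G(n,p)$ case (\cref{cor:bokGnp}) because $G_{n,d}$ is regular, so the degree-distribution norms become exact rather than concentration statements.

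First, I would invoke the cited fact that for $3\leq d\leq n/2$ the random $d$-regular graph $G_{n,d}$ is connected and $O(1/\sqrt{d})$-expander w.h.p. Since $G_{n,d}$ is $d$-regular, the degree distribution $\pi$ from \cref{def:degreedist} satisfies $\pi(v)=1/n$ for every $v\in V$. This immediately gives the deterministic identities $\|\pi\|_2 = n^{-1/2}$ and $\|\pi\|_3 = n^{-2/3}$.

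Next, I would check the three conditions of \cref{thm:bok} for a suitable absolute constant $C>0$. The assumption $d=\Omega(k\sqrt{n})$ yields $1/\sqrt{d} = O(k^{-1/2}n^{-1/4})$, so $\lambda \leq C k^{-1/2} n^{-1/4}$ after enlarging $C$ to absorb the implicit constants from expansion and from the $\Omega$. The bound $\|\pi\|_2 = n^{-1/2} \leq C n^{-1/2}$ is immediate. Finally, the inequality $\|\pi\|_3 = n^{-2/3} \leq C k^{-1/6} n^{-1/2}$ reduces to $k \leq C^6 n$, which comfortably holds since $k=O(\sqrt{n})$. Applying \cref{thm:bok} then gives $T_{\cons}(A)=O(\log n/\log k)$ w.h.p.\ for any $A\subseteq V$.

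I do not anticipate any substantive obstacle: \cref{cor:bokGnd} is essentially a direct substitution of the regular-graph parameters into the general expander theorem. The only mild care required is choosing a single constant $C$ large enough to simultaneously absorb the hidden constant in the $O(1/\sqrt{d})$ expansion bound, the hidden constant in the $\Omega(k\sqrt{n})$ assumption on $d$, and the constant $C$ appearing inside \cref{thm:bok}; this is a routine constant chase.
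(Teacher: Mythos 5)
Your proposal is correct and matches the paper's argument in substance: the paper derives \cref{cor:bokGnd} by first noting that $d_{\max}\leq Cd_{\mathrm{avr}}$ implies $\|\pi\|_p=\Theta(n^{-1+1/p})$ (giving \cref{cor:bok_avedegree}) and then plugging in the $O(1/\sqrt{d})$-expansion of $G_{n,d}$, which is exactly your constant chase specialized via the intermediate corollary. Your direct substitution of $\pi(v)=1/n$ into \cref{thm:bok} is the same computation, so there is nothing further to add.
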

%
We can apply \cref{thm:mainthm,thm:fastconsensus} if the ratio of the maximum and average degree is constant as follows.
\begin{corollary}
\label{cor:avedegree}
Consider a \qmaj \fvoting with respect to $f$ on an $n$-vertex $\lambda$-expander graph with degree distribution $\pi$. 
Suppose that $d_{\max}\leq C_1d_{{\rm ave}}$ for an arbitrary constant $C_1>0$,
where $d_{\max}$ and $d_{\mathrm{avr}}$ denote the maximum and average degree, respectively.
Then, the following holds:
\begin{enumerate}[label=(\roman*)]
    \item 
    Suppose that $\lambda \leq C_1n^{-1/4}$. Then
    \begin{enumerate}
        \item for any $A\subseteq V$, $T_\cons(A)=O(\log n)$ w.h.p.
        \item for some $A\subseteq V$, $T_\cons(A)=\Omega(\log n)$ w.h.p.
    \end{enumerate}
    \item 
    Suppose that $\lambda\leq C_2$ for some constant $C_2>0$ depending only on $f$.
    Then, for any $A\subseteq V$ satisfying $|\delta(A)|\geq C_2\max\bigl\{\lambda^2,\sqrt{\frac{\log n}{n}}\bigr\}$, $T_{\cons}(A)=O(\log n)$ w.h.p.
    \item In addition to the same assumption as (ii), suppose that
    $H_f'(0)=0$.
    Then, it holds w.h.p.~that
    $T_{\cons}(A)=O\left(\log \log n +\log |\delta(A)|^{-1} + \frac{\log n}{\log \lambda^{-1}} \right)$.
\end{enumerate}
\end{corollary}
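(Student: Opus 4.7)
The plan is to reduce \cref{cor:avedegree} directly to \cref{thm:mainthm}, \cref{thm:lowerboundconsensustime}, and \cref{thm:fastconsensus}, by converting the hypothesis $d_{\max}\le C_1 d_{\mathrm{avr}}$ into explicit $\ell^p$-norm estimates on the degree distribution~$\pi$. The key observation is that $2|E|=\sum_v\deg(v)=n\, d_{\mathrm{avr}}$, so
\begin{align*}
  \pi(v)\;=\;\frac{\deg(v)}{n\, d_{\mathrm{avr}}}\;\le\;\frac{d_{\max}}{n\,d_{\mathrm{avr}}}\;\le\;\frac{C_1}{n}
\end{align*}
for every $v\in V$. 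Combining this pointwise bound with $\sum_v\pi(v)=1$ and the trivial inequality $\|\pi\|_p^p\le \|\pi\|_\infty^{p-1}\|\pi\|_1$ gives $\|\pi\|_2\le \sqrt{C_1/n}$ and $\|\pi\|_3\le C_1^{2/3}n^{-2/3}=(C_1^{2/3}n^{-1/6})/\sqrt{n}$.

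I would then verify the hypotheses of each referenced theorem in turn. For~(i)(a), setting $\varepsilon(n):=C_1^{2/3}n^{-1/6}\to 0$ and enlarging the constant in \cref{thm:mainthm}\cref{thm:worstcasefvoting} to absorb the factor $\sqrt{C_1}$, all three conditions $\lambda\le C_1 n^{-1/4}$, $\|\pi\|_2=O(1/\sqrt{n})$, and $\|\pi\|_3\le \varepsilon/\sqrt{n}$ are immediate. Part~(i)(b) is a direct application of \cref{thm:lowerboundconsensustime} (whose hypotheses coincide with those of~(i)(a)). For~(ii), the estimate $\|\pi\|_2=O(1/\sqrt{n})=o(1/\sqrt{\log n})$ secures the second hypothesis of \cref{thm:mainthm}\cref{thm:cons_initial_bias} for large $n$, while
\begin{align*}
  \|\pi\|_2\sqrt{\log n}\;\le\;\sqrt{C_1\log n/n}
\end{align*}
shows that the bias hypothesis of the corollary implies that of the theorem after absorbing $\sqrt{C_1}$ into the constant. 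Part~(iii) follows analogously from \cref{thm:fastconsensus}; in addition, the inequality above yields $\log\bigl((\|\pi\|_2\sqrt{\log n})^{-1}\bigr)\ge \tfrac12\log(n/(C_1\log n))=\Theta(\log n)$, so the fourth term $\log n/\log((\|\pi\|_2\sqrt{\log n})^{-1})$ appearing in the running-time bound of \cref{thm:fastconsensus} is $O(1)$ and may be dropped from the statement.

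There is essentially no technical obstacle: the content of the corollary is precisely that a bounded ratio $d_{\max}/d_{\mathrm{avr}}$ forces the degree distribution to be ``almost uniform'' in every $\ell^p$ sense, which is exactly the regime handled by the main theorems. The only care required is bookkeeping the constants, in particular choosing the constant $C$ in parts~(ii) and~(iii) large enough that the implicit factor $\sqrt{C_1}$ (arising when one trades $\|\pi\|_2\sqrt{\log n}$ for $\sqrt{\log n/n}$) is absorbed. Since $C_1$ is a fixed constant from the hypothesis, this rescaling is harmless.
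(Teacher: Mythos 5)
Your proposal is correct and follows essentially the same route as the paper: the paper likewise observes that $d_{\max}\leq C_1 d_{\mathrm{avr}}$ forces $\pi(v)=\deg(v)/(n d_{\mathrm{avr}})=O(1/n)$, hence $\|\pi\|_p=\Theta(n^{-(1-1/p)})$, and then invokes \cref{thm:mainthm,thm:lowerboundconsensustime,thm:fastconsensus} directly. Your additional bookkeeping (choosing $\varepsilon(n)=C_1^{2/3}n^{-1/6}$ and noting that $\log\bigl((\|\pi\|_2\sqrt{\log n})^{-1}\bigr)=\Theta(\log n)$ kills the fourth term in the bound of \cref{thm:fastconsensus}) just makes explicit what the paper leaves implicit.
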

\begin{corollary}
\label{cor:bok_avedegree}
Let $k=k(n)$ be such that $k=\omega(1)$ and $k=o(n/\log n)$.
Let $C$ be an arbitrary constant.
Consider best-of-$(2k+1)$ on an $n$-vertex $\lambda$-expander graph with degree distribution $\pi$ such that $\lambda\leq Ck^{-1/2}n^{-1/4}$, and $d_{\max}\leq C d_{\mathrm{avr}}$, where $d_{\max}$ and $d_{\mathrm{avr}}$ denote the maximum and average degree, respectively.
Then, $T_{\cons}(A)=O\left(\frac{\log n}{\log k}\right)$ holds
w.h.p.~for any $A\subseteq V$.
\end{corollary}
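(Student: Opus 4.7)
The plan is to reduce \cref{cor:bok_avedegree} to \cref{thm:bok} by verifying that the bounded ratio $d_{\max} \leq C d_{\mathrm{avr}}$ forces the degree distribution $\pi$ to satisfy the $\ell^2$ and $\ell^3$ norm hypotheses of \cref{thm:bok}. The eigenvalue hypothesis $\lambda \leq C k^{-1/2} n^{-1/4}$ is assumed directly, so only the two norm inequalities require work.

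First I would observe that, since $2|E| = n \cdot d_{\mathrm{avr}}$, the bound $d_{\max} \leq C d_{\mathrm{avr}}$ yields
\begin{align*}
\pi(v) = \frac{\deg(v)}{2|E|} \leq \frac{d_{\max}}{n \cdot d_{\mathrm{avr}}} \leq \frac{C}{n}
\end{align*}
for every vertex $v$. Combining this pointwise bound with $\sum_v \pi(v)=1$ immediately gives
\begin{align*}
\|\pi\|_2^2 = \sum_{v\in V} \pi(v)^2 \leq \left(\max_v \pi(v)\right)\sum_{v\in V}\pi(v) \leq \frac{C}{n},
\end{align*}
so $\|\pi\|_2 \leq \sqrt{C}/\sqrt{n}$, which fits the $\|\pi\|_2 \leq C' n^{-1/2}$ hypothesis of \cref{thm:bok} for a suitable constant $C'$. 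Similarly,
\begin{align*}
\|\pi\|_3^3 = \sum_{v \in V} \pi(v)^3 \leq \left(\max_v \pi(v)\right)^2\sum_{v\in V}\pi(v) \leq \frac{C^2}{n^2},
\end{align*}
which gives $\|\pi\|_3 \leq C^{2/3} n^{-2/3}$.

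It remains to confirm that $n^{-2/3}$ absorbs the factor $k^{-1/6} n^{-1/2}$ required by \cref{thm:bok}. Writing $n^{-2/3} = k^{-1/6} n^{-1/2} \cdot (k/n)^{1/6}$ and using the standing assumption $k = o(n/\log n)$, we get $(k/n)^{1/6} = o(1)$, so $\|\pi\|_3 \leq C^{2/3}(k/n)^{1/6}\cdot k^{-1/6} n^{-1/2} \leq C'' k^{-1/6} n^{-1/2}$ for large $n$. All three hypotheses of \cref{thm:bok} thus hold (with a constant depending only on $C$), and invoking that theorem yields $T_{\cons}(A) = O(\log n / \log k)$ w.h.p.\ for any $A \subseteq V$.

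There is no real obstacle here; the argument is a direct parameter check. The one mildly delicate point is that the constant labelled $C$ in the corollary is being reused to bound two different quantities ($\lambda$ and $d_{\max}/d_{\mathrm{avr}}$), while \cref{thm:bok} also fixes a single constant $C$ controlling three conditions. I would make explicit that the constant appearing in \cref{thm:bok} may be chosen as a (computable) function of the constant in \cref{cor:bok_avedegree}, so that the quantification ``arbitrary constant'' is honestly preserved across the reduction.
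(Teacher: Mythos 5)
Your proposal is correct and follows essentially the same route as the paper: the paper likewise notes that $d_{\max}\leq C d_{\mathrm{avr}}$ forces $\pi(v)=O(1/n)$, hence $\|\pi\|_p=\Theta(n^{-(1-1/p)})$, and then invokes \cref{thm:bok} directly. Your explicit check that $n^{-2/3}\leq C'' k^{-1/6}n^{-1/2}$ under $k=o(n/\log n)$ is the only detail the paper leaves implicit, and it is carried out correctly.
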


\Cref{cor:avedegree,cor:bok_avedegree} immediately follow from \cref{thm:mainthm,thm:lowerboundconsensustime,thm:fastconsensus,thm:bok} since $\|\pi\|_2 =O(n^{-1/2})$.
Note that if the ratio of the maximum degree $d_{\max}$ and average degree $d_{\mathrm{avr}}$ is constant,
$\|\pi\|_p=\Theta(1/n^{1-1/p})$ since $\pi(v)=O(1/n)$ for all $v\in V$. 
We obtain \cref{cor:Gnp,cor:bokGnp,cor:Gnd,cor:bokGnd} from \cref{cor:avedegree,cor:bok_avedegree}. 

\paragraph*{Other \qmaj \fvoting.}
We can consider the \emph{$\rho$-lazy} variant of a voting process, i.e., every vertex $v$ individually tosses its private coin and operates the voting process with probability $\rho$, while $v$ does nothing with probability $1-\rho$.
Berenbrink, Giakkoupis, Kermarrec, and Mallmann-Trenn~\cite{BGKMT16} studies $1/2$-lazy pull voting.
If the original voting process is a \qmaj \fvoting with respect to $f$, then the corresponding $\rho$-lazy variant is \qmaj functional voting with respect to $\rho f:x\mapsto \rho f(x)$.
Indeed, $H_{\rho f}(x) = (1-\rho)x+\rho H_f(x)$.

\begin{corollary}
Consider a $\rho$-lazy \qmaj \fvoting on $G(n,p)$ for an arbitrary constant $\rho\in (0,1]$. Suppose that $p=\Omega(1/\sqrt{n})$.
Then, for any $A\subseteq V$, $T_{\cons}(A)=O(\log n)$ w.h.p.
\end{corollary}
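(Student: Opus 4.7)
The plan is to reduce the statement to \cref{thm:mainthm}\cref{thm:worstcasefvoting} applied to the betrayal function $\rho f$, together with the spectral and degree-concentration estimates for $G(n,p)$ already invoked earlier in the paper.

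First, I would verify that $\rho f$ is itself \qmaj whenever $f$ is and $\rho\in(0,1]$. The identity $H_{\rho f}(x)=(1-\rho)x+\rho H_f(x)$, noted in the paragraph introducing $\rho$-lazy voting, makes conditions \cref{cond:fC2}--\cref{cond:Hfx>x} of \cref{def:quasi-majority} immediate: $\rho f$ is $C^2$ since $f$ is; we have $0<(\rho f)(1/2)=\rho f(1/2)<1$; and $H_{\rho f}(x)-x=\rho(H_f(x)-x)<0$ for $x\in(0,1/2)$ because $\rho>0$ and $f$ satisfies \cref{cond:Hfx>x}. Differentiating gives $H'_{\rho f}(x)=1+\rho(H'_f(x)-1)$, from which \cref{cond:katamuki_center} and \cref{cond:katamuki_edge} also transfer: $H'_{\rho f}(1/2)-1=\rho(H'_f(1/2)-1)>0$ and $H'_{\rho f}(0)-1=\rho(H'_f(0)-1)<0$. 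Hence a $\rho$-lazy \qmaj \fvoting with respect to $f$ is a (non-lazy) \qmaj \fvoting with respect to $\rho f$, so \cref{thm:mainthm}\cref{thm:worstcasefvoting} is applicable once the graph hypotheses are checked.

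Second, I would verify those graph hypotheses for $G(n,p)$ with $p=\Omega(1/\sqrt{n})$. The bound of Chung~\cite{C07} cited in the paper gives $\lambda=O(1/\sqrt{np})=O(n^{-1/4})$ w.h.p., meeting the spectral requirement. Since $np=\Omega(\sqrt{n})\gg \log n$, standard Chernoff concentration yields $d_{\max}=O(np)$ and $d_{\min}=\Omega(np)$ w.h.p., so that $\pi(v)=\Theta(1/n)$ uniformly over $v\in V$. This gives $\|\pi\|_2=O(1/\sqrt{n})$ and $\|\pi\|_3=O(n^{1/3}/n)=O(n^{-2/3})=o(n^{-1/2})$, satisfying all three hypotheses of \cref{thm:mainthm}\cref{thm:worstcasefvoting}.

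Combining the two observations and applying \cref{thm:mainthm}\cref{thm:worstcasefvoting} to the functional voting with respect to $\rho f$ on this $G(n,p)$ yields $T_{\cons}(A)=O(\log n)$ w.h.p.~for any $A\subseteq V$. The only non-routine point is the joint high-probability guarantee on spectral expansion and degree concentration, but both statements are classical and were already used in \cref{cor:Gnp}, so no new analysis is required; the real content lies entirely in the stability of the \qmaj property under the lazy transform.
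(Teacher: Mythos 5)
Your proposal is correct and matches the paper's own (largely implicit) argument: the paper justifies the corollary precisely by noting $H_{\rho f}(x)=(1-\rho)x+\rho H_f(x)$, so that the lazy variant is a \qmaj \fvoting with respect to $\rho f$, and then applying \cref{thm:mainthm}\cref{thm:worstcasefvoting} with the standard spectral and degree bounds for $G(n,p)$, $p=\Omega(1/\sqrt{n})$. Your write-up simply makes explicit the verification of conditions \cref{cond:fC2}--\cref{cond:katamuki_edge} for $\rho f$ and of the norm hypotheses $\|\pi\|_2=O(n^{-1/2})$, $\|\pi\|_3=O(n^{-2/3})=o(n^{-1/2})$, which is exactly what the paper leaves to the reader.
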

This implies the following interesting observation.
In voting processes, the number of neighbor sampling queries per each vertex at each step affects the performance.
In pull voting, each vertex communicates with one neighbor but it has a drawback on the slow consensus time.
In \btwo, each vertex communicates with two random neighbors and its consensus time is much faster than that of pull voting.
In $\rho$-lazy \btwo, each vertex queries $2\rho$ vertices at each round in expectation, that is less queries than pull voting if $\rho<1/2$. 
On the other hand, the consensus time is much faster than pull voting.

Additionally, we can deal with {\em $k$-careful voting}.
In this model, each vertex $v$ selects $k$ random neighbors (with replacement), and if these sampled $k$ opinions are the same one, $v$ adopts it.
Note that one-careful voting and two-careful voting are equivalent to pull voting and \btwo, respectively.
One can check easily that, for any constant $k\geq 2$, this model is a \qmaj \fvoting with respect to
$f(x)=x^k$.
Note that 
$H_f'(0)=0$ and $H_f'(1/2)=1+\frac{k-1}{2^{k-1}}$.


\begin{corollary}
Consider a $k$-careful voting on $G(n,p)$ for an arbitrary constant $k\geq 2$. Suppose that $p=\Omega(1/\sqrt{n})$.
Then, for any $A\subseteq V$, $T_{\cons}(A)=O(\log n)$ w.h.p.
\end{corollary}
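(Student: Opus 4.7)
The plan is to derive the corollary as an immediate consequence of \cref{thm:mainthm}\cref{thm:worstcasefvoting}. Two ingredients are required: first, that $k$-careful voting is a \qmaj \fvoting in the sense of \cref{def:quasi-majority}; second, that $G(n,p)$ with $p=\Omega(1/\sqrt{n})$ satisfies the three hypotheses $\lambda=O(n^{-1/4})$, $\|\pi\|_2=O(1/\sqrt{n})$, and $\|\pi\|_3 = \varepsilon(n)/\sqrt{n}$ with $\varepsilon(n)\to 0$, all w.h.p.\ over the graph.

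For the first ingredient, the betrayal function is $f(x)=x^k$ so the updating function is
\[
H_f(x) \;=\; x\bigl(1-(1-x)^k\bigr) \;+\; (1-x)\,x^k.
\]
Conditions~\cref{cond:fC2} and~\cref{cond:f1/2} are immediate since $f$ is a polynomial with $f(1/2)=2^{-k}\in(0,1)$. For condition~\cref{cond:Hfx>x} I would use the factorisation $H_f(x)-x = x(1-x)\bigl(x^{k-1}-(1-x)^{k-1}\bigr)$, which is strictly negative on $(0,1/2)$ because $x<1-x$ and $k\geq 2$. A direct differentiation then gives $H_f'(0)=0<1$ and $H_f'(1/2)=1+(k-1)/2^{k-1}>1$, which settles conditions~\cref{cond:katamuki_edge} and~\cref{cond:katamuki_center}; these are the numerical values already quoted in the paragraph introducing $k$-careful voting above.

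For the second ingredient, I would argue as follows. Since $p=\Omega(1/\sqrt{n})$ we have $np=\Omega(\sqrt{n})$, so the spectral bound of~\cite{C07} gives $\lambda=O(1/\sqrt{np})=O(n^{-1/4})$ w.h.p. Moreover, $np=\omega(\log n)$, so a standard multiplicative Chernoff estimate combined with a union bound over the $n$ vertices yields $\deg(v)=(1\pm o(1))\,np$ uniformly in $v\in V$ w.h.p., and hence $\pi(v)=\Theta(1/n)$ for every $v$. From this one reads off $\|\pi\|_2 = O(1/\sqrt{n})$ and $\|\pi\|_3 = O(n^{-2/3}) = \varepsilon(n)/\sqrt{n}$ with $\varepsilon(n)=O(n^{-1/6})\to 0$, which matches the hypotheses of \cref{thm:mainthm}\cref{thm:worstcasefvoting} exactly.

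No step in this plan is genuinely delicate: the corollary is essentially a dictionary translation from the general theorem to a concrete rule on a concrete random graph. The only point that requires a small amount of care is the degree-concentration step, because the $\|\pi\|_3$ hypothesis cannot absorb even a handful of anomalously high-degree vertices; fortunately the regime $np=\omega(\log n)$ is comfortably above the threshold at which the Chernoff bound yields \emph{uniform} concentration over $V$, so this causes no trouble. With these two ingredients in place, invoking \cref{thm:mainthm}\cref{thm:worstcasefvoting} and combining the w.h.p.\ events over the random graph and the voting process completes the proof.
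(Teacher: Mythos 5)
Your proposal is correct and follows essentially the same route as the paper: verify that $f(x)=x^k$ is \qmaj (the paper records exactly the values $f(1/2)=2^{-k}$, $H_f'(0)=0$ and $H_f'(1/2)=1+\frac{k-1}{2^{k-1}}$ that you derive), check that $G(n,p)$ with $p=\Omega(1/\sqrt{n})$ is w.h.p.\ an $O(n^{-1/4})$-expander with $\pi(v)=\Theta(1/n)$ uniformly, and invoke \cref{thm:mainthm}\cref{thm:worstcasefvoting}. The only cosmetic difference is that the paper routes the degree-concentration step through \cref{cor:avedegree} (the condition $d_{\max}\leq C d_{\mathrm{avr}}$) rather than verifying the $\|\pi\|_2$ and $\|\pi\|_3$ bounds directly, which is the same argument.
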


\subsection{Related work}
In asynchronous voting process,
in each round,
a vertex is selected uniformly at random
and only the selected vertex updates its opinion.
Cooper and Rivera~\cite{CR16}
introduced \emph{linear voting model}.
In this model, an opinion
configuration
is represented as a vector $v\in \Sigma^V$ and the
vector $v$ updates according to the rule $v\leftarrow Mv$, where
$M$ is a random matrix sampled from
some probability space.
This model captures a wide variety model
including asynchronous push/pull voting and synchronous pull voting.
Note that \btwo and \bthree are not included in linear voting model.
Schoenebeck and Yu~\cite{SY18}
proposed an asynchronous variant of our \fvoting.
The authors of \cite{SY18} proved that, if the function $f$ is symmetric (i.e., $f(1-x)=1-f(x)$), smooth and has ``majority-like'' property (i.e., $f(x)>x$ whenever $1/2<x<1$), then the expected consensus time is $O(n\log n)$ w.h.p.~on $G(n,p)$ with $p=\Omega(1)$.
This perspective has also been investigated in physics~(see, e.g.,~\cite{CMP09}).

Several researchers have studied \btwo and \bthree on complete graphs initially involving $k\geq 2$ opinions~\cite{BCNPT16, BCNPST17, BCEKMN17, GL18}.
For example, the consensus time of \bthree is $O(k\log n)$ if $k=O(n^{1/3}/\sqrt{\log n})$~\cite{GL18}.
Cooper, Radzik, Rivera, and Shiraga~\cite{CRRS17} considered \btwo and \bthree on regular expander graphs that hold more than two opinions.

Recently, Cruciani, Natale, and Scornavacca~\cite{CNS19} 
studied 
\btwo with a random initial configuration on a clustered regular graph.
Shimizu and Shiraga~\cite{SS19} obtained phase-transition results of \btwo and \bthree on stochastic block models.

\section{Preliminary and technical result}
\subsection{Formal definition}
Let $G=(V,E)$ be an undirected and connected graph.
Let $P\in[0,1]^{V\times V}$ be the matrix defined as
\begin{align}
P(u,v)&\defeq \frac{\mathbbm{1}_{\{u,v\}\in E}}{\deg(u)} \ \ \ \forall (u,v)\in V\times V
\label{def:SRW}
\end{align}
where $\mathbbm{1}_{Z}$ denotes the indicator
of an event $Z$.
For $v\in V$ and $S\subseteq V$,
we write $P(v,S)=\sum_{s\in S}P(v,s)$.

Now, let us describe the formal definition of \fvoting.
For a given $A\subseteq V$, let $(X_v)_{v\in V}$ be independent
binary random variables defined as
\begin{align}
\begin{aligned}
\Pr[X_v=1]=f\bigl(P(v,A)\bigr) \quad \text{if $v\in B$}, \\ \Pr[X_v=0]=f\bigl(P(v,B)\bigr)  \quad \text{if $v\in A$},
\end{aligned}
\label{eq:Xvdef}
\end{align}
where $B=V\setminus A$.
For $A\subseteq V$ and $(X_v)$ above, define $A'=\{v\in V:X_v=1\}$.
Note that this definition coincides with \cref{def:fVoting} since $P(v,A)=\frac{\deg_A(v)}{\deg(v)}$.
Then, a \fvoting is a Markov chain $A_0,A_1,\ldots$ where $A_{t+1}=(A_t)'$.

For $A\subseteq V$, let $T_{\cons}(A)$ denote the consensus time of the \fvoting starting from the initial configuration $A$.
Formally, $T_{\cons}(A)$ is the stopping time defined as
\begin{align*}
    T_{\cons}(A)\defeq \min\left\{t\geq 0: A_t\in \{\emptyset,V\}, A_0=A\right\}.
\end{align*}

\subsection{Technical background} \label{sec:technicalbackground}
Consider  
\btwo
on a complete graph $K_n$ (with self loop on each vertex) with a current configuration $A\subseteq V$.
Let $\alpha=|A|/n$.
%
We have $P(v,A)=\alpha$ for any $v\in V$ and $A\subseteq V$.
Then, for any $A\subseteq V$, $\E[\alpha']=H_f(\alpha)=3\alpha^2-2\alpha^3$. 
Thus, in each round, $\alpha'= 3\alpha^2-2\alpha^3 \pm O(\sqrt{\log n/n})$ holds w.h.p.~from the Hoeffding bound.
Therefore, the behavior of $\alpha$ can be written as the iteration of applying $H_f$.

The most technical part is the symmetry breaking at $\alpha=1/2$.
Note that $H_f(1/2)=1/2$ and thus, the argument above does not work in the case of $|\alpha-1/2|=o(\sqrt{\log n/n})$.
To analyze this case, the authors of \cite{DGMSS11,CGGNPS18} proved the following
technical lemma asserting that $\alpha$ w.h.p.~escapes from
the area in $O(\log n)$ rounds.
\begin{lemma}[Lemma 4.5 of \cite{CGGNPS18} (informal)] \label{lem:nazolemma_informal}
For any constant $C$, it holds w.h.p.~that $|\alpha-1/2|\geq C\sqrt{\log n/n}$ in $O(\log n)$ rounds (the hidden constant factor depends on $C$) if
\begin{enumerate}[label=(\roman*)]
\item \label{cond:nazo_cond1_informal}
For any constant $h$, there is a constant $C_0>0$ such that, if $|\alpha-1/2|=O(\sqrt{\log n/n})$ then $\Pr[|\alpha'-1/2|> h/\sqrt{n}] > C_0$.

\item \label{cond:nazo_cond2_informal}
If $|\alpha-1/2|=O(\sqrt{\log n/n})$ and $|\alpha-1/2|=\Omega(1/\sqrt{n})$,
$\Pr[|\alpha'-1/2| \leq (1+\epsilon) |\alpha-1/2|] \leq \exp(-\Theta((\alpha-1/2)^2n))$  
for some constant $\epsilon>0$.
\end{enumerate}
\end{lemma}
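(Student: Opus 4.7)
The plan is to escape the $O(\sqrt{\log n/n})$-window around $1/2$ by designing a ``trial'' of length $T = \Theta(\log \log n)$ that combines one ``kick-start'' round based on condition~\cref{cond:nazo_cond1_informal} with a subsequent geometric-growth phase based on condition~\cref{cond:nazo_cond2_informal}, and then boosting its constant success probability to $1-n^{-\Omega(1)}$ by running trials back-to-back. Conceptually, \cref{cond:nazo_cond1_informal} supplies a constant-probability push from the innermost $O(1/\sqrt{n})$ strip out to distance $h/\sqrt{n}$, while \cref{cond:nazo_cond2_informal} multiplies any bias of order $1/\sqrt{n}$ by $(1+\epsilon)$ per round with doubly exponentially decaying failure probability.

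The trial analysis proceeds as follows. Fix a constant $h$ (to be chosen) and let $T = \lceil \log_{1+\epsilon}(C\sqrt{\log n}/h)\rceil + 1 = \Theta(\log \log n)$. Starting from any $\alpha_0$ with $|\alpha_0 - 1/2| \leq C\sqrt{\log n/n}$, invoke \cref{cond:nazo_cond1_informal} with parameter $h$ to get $|\alpha_1 - 1/2| > h/\sqrt{n}$ with probability at least $C_0 = C_0(h)$. Conditional on $|\alpha_{t-1} - 1/2| \geq (1+\epsilon)^{t-2} h/\sqrt{n}$, \cref{cond:nazo_cond2_informal} gives $|\alpha_t - 1/2| \geq (1+\epsilon)|\alpha_{t-1} - 1/2|$ with probability at least $1 - \exp(-\Theta(h^2(1+\epsilon)^{2(t-2)}))$. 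Chaining these events by an inductive union bound shows that $|\alpha_T - 1/2| \geq C\sqrt{\log n/n}$ with probability at least
\begin{align*}
C_0 - \sum_{s=0}^{T-2} \exp\!\left(-\Theta(h^2(1+\epsilon)^{2s})\right) \;\geq\; C_0 - O\!\left(\exp(-\Theta(h^2))\right),
\end{align*}
the geometric sum being dominated by its $s=0$ term. Choosing $h$ a sufficiently large constant (depending only on $\epsilon$ and on the implicit constants in the two hypotheses), this lower bound is at least $C_0/2$.

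By the Markov property, running $K$ trials back-to-back and truncating as soon as a trial endpoint falls outside the bad window yields total failure probability at most $(1-C_0/2)^K$, which is $n^{-\Omega(1)}$ for $K = \Theta(\log n)$. A naive count gives $KT = O(\log n \cdot \log \log n)$ rounds; to sharpen this to the claimed $O(\log n)$ one amortizes across trials, noting that once a growth phase is initiated by a successful kick, it either exits the bad window or drifts back into the inner $O(1/\sqrt{n})$ strip, in which case only the constant-length kick needs to be retried. Thus, summed across all trials, the expected number of rounds spent in growth phases is $O(\log \log n)$, while the kick phase is attempted at most $O(\log n)$ times, giving $O(\log n)$ rounds w.h.p.\ overall.

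The main obstacle is the crossover regime $|\alpha - 1/2| \asymp 1/\sqrt{n}$: here the bound $\exp(-\Theta((\alpha-1/2)^2 n))$ in~\cref{cond:nazo_cond2_informal} is only a small constant rather than $n^{-\omega(1)}$, so the geometric-growth phase succeeds only with constant probability per trial and cannot be made w.h.p.\ on its own—this is precisely what forces one to combine~\cref{cond:nazo_cond2_informal} with the kick supplied by~\cref{cond:nazo_cond1_informal} and then to boost by repetition. A secondary subtlety is that $C_0 = C_0(h)$ may itself shrink as $h$ grows, so one must verify $C_0(h) > O(\exp(-\Theta(h^2)))$ for some constant $h$; this is safe because the growth-failure term decays at a rate controllable through~\cref{cond:nazo_cond2_informal} (via $\epsilon$ and its implicit constant), while a Gaussian-type anti-concentration produces a $C_0(h)$ that does not decay faster than $e^{-O(h^2)}$.
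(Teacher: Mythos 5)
Your restart-based argument takes a genuinely different route from the one in the literature: the paper itself does not prove this lemma (it is imported from \cite{CGGNPS18}), but its proof of the modified version, \cref{lem:nazolemma_bok} in \cref{sec:proofofnazolemma_bok}, shows the intended mechanism. There one looks at the chain only at the hitting times of the region $\{\Psi\geq h\sqrt{n}\}$, shows that the potential $\exp(-\Psi(R_i)/\sqrt{n})$ contracts geometrically in expectation along that embedded chain (so $O(\log n)$ embedded steps suffice, by Markov's inequality applied to the potential), and then uses condition~\cref{cond:nazo_cond1_informal} together with stochastic domination by i.i.d.~Bernoullis to show that $O(\log n)$ real rounds contain $\Omega(\log n)$ embedded steps w.h.p. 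That design never needs to track how far a failed excursion falls back, which is exactly where your argument runs into trouble.

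The genuine gap is in your amortization from $O(\log n\cdot\log\log n)$ down to $O(\log n)$. The dichotomy ``a failed growth phase either exits the window or drifts back into the inner $O(1/\sqrt{n})$ strip'' is not implied by~\cref{cond:nazo_cond2_informal}: a failed step only means the bias did not grow by the factor $(1+\epsilon)$; it may well remain in the intermediate region, and nothing in the hypotheses controls where it lands. More importantly, even granting that the \emph{expected} number of rounds spent in growth phases is $O(\log\log n)$, the lemma requires a w.h.p.\ statement, and Markov's inequality converts that expectation only into a $1-o(1)$ bound, not $1-n^{-\Omega(1)}$; to rescue your scheme you would need a concentration bound on the sum of the (doubly-exponentially-tailed) lengths of the $\Theta(\log n)$ failed trials, which is doable but absent. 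Separately, your worry about the race between $C_0(h)$ and $\exp(-\Theta(h^2))$ is resolved by an argument that is not available here: the hypotheses say nothing about the decay rate of $C_0(h)$ in $h$, so appealing to Gaussian anti-concentration imports facts about the application into the abstract lemma. The clean fix is to chain multiplicatively, $\Pr[\text{trial succeeds}]\geq C_0(h)\bigl(1-\sum_{s}\exp(-\Theta(h^2(1+\epsilon)^{2s}))\bigr)$, which only requires the geometric sum to be below $1$ and hence only that $h$ exceed a constant determined by~\cref{cond:nazo_cond2_informal}, with no comparison against $C_0(h)$ needed.
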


Intuitively speaking,
the condition \cref{cond:nazo_cond2_informal} means that the bias $|\alpha'-1/2|$ is likely to be at least $(1+\epsilon)|\alpha-1/2|$
for some constant $\epsilon>0$.
The condition \cref{cond:nazo_cond2_informal}
is easy to check using the Hoeffding bound.
The condition \cref{cond:nazo_cond1_informal} means that $\alpha'$ has a fluctuation of size $\Omega(1/\sqrt{n})$ with a constant probability.
We can check condition \cref{cond:nazo_cond1_informal}
using the Central Limit Theorem (the Berry-Esseen bound, see~\cref{lem:Berry-Esseen}).
The Central Limit Theorem
implies that the normalized random variable
$(\alpha'-\E[\alpha'])/\sqrt{\Var[\alpha']}$
converges to the standard normal distribution as $n\to\infty$.
In other words, $\alpha'$ has
a fluctuation of size $\Theta(\sqrt{\Var[\alpha']})$ with constant probability.
Now, to verify the condition \cref{cond:nazo_cond1_informal}, we evaluate
$\Var[\alpha']$.
On $K_n$, it is easy to show that $\Var[\alpha']=\Theta(1/n)$, which implies the condition~\cref{cond:nazo_cond1_informal}.

The authors of~\cite{CERRS15,CRRS17} considered \btwo on expander graphs.
They focused on the behavior of $\pi(A)$ instead of $\alpha$. 
Roughly speaking, they proved that 
$\E[\pi(A')-1/2]\geq (1+\epsilon)(\pi(A)-1/2)-O(\lambda^2)$.
At the heart of the proof, they showed the following result.
\begin{lemma}[Special case of Lemma 3 of \cite{CRRS17}]
\label{lem:square2}
Consider a $\lambda$-expander graph with degree distribution $\pi$.
Then, for any $S\subseteq V$,
\begin{align*}
\left|\sum_{v\in V}\pi(v)P(v,S)^2-\pi(S)^2\right|\leq \lambda^2\pi(S)\bigl(1-\pi(S)\bigr).
\end{align*}
\end{lemma}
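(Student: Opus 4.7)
The plan is to reinterpret the left-hand side as a squared Hilbert-space norm and then invoke the defining spectral inequality of a $\lambda$-expander. Writing $\mathbbm{1}_S\in\{0,1\}^V$ for the indicator vector of $S$, the identity $P(v,S)=(P\mathbbm{1}_S)(v)$ shows that $\sum_{v\in V}\pi(v)P(v,S)^2=\langle P\mathbbm{1}_S,P\mathbbm{1}_S\rangle_\pi$, where $\langle f,g\rangle_\pi\defeq\sum_{v\in V}\pi(v)f(v)g(v)$. Under this inner product $P$ is self-adjoint (via the reversibility relation $\pi(u)P(u,v)=\pi(v)P(v,u)$ for the simple random walk), the all-ones vector $\mathbf{1}$ is its top eigenvector with eigenvalue $1$, and by hypothesis every $h$ satisfying $\langle h,\mathbf{1}\rangle_\pi=0$ obeys $\|Ph\|_\pi\leq \lambda\|h\|_\pi$.

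The next step is the orthogonal decomposition $\mathbbm{1}_S=\pi(S)\mathbf{1}+g$, where $g\defeq\mathbbm{1}_S-\pi(S)\mathbf{1}$ is orthogonal to $\mathbf{1}$ in $L^2(\pi)$ because $\langle\mathbbm{1}_S,\mathbf{1}\rangle_\pi=\pi(S)$. A short calculation yields
$$\|g\|_\pi^2=\|\mathbbm{1}_S\|_\pi^2-\pi(S)^2=\pi(S)-\pi(S)^2=\pi(S)\bigl(1-\pi(S)\bigr).$$
Since $P\mathbf{1}=\mathbf{1}$ and $\langle Pg,\mathbf{1}\rangle_\pi=\langle g,\mathbf{1}\rangle_\pi=0$, the Pythagorean identity gives $\|P\mathbbm{1}_S\|_\pi^2=\pi(S)^2+\|Pg\|_\pi^2$, so the quantity inside the absolute value in the statement of the lemma is exactly $\|Pg\|_\pi^2$, which is in fact non-negative.

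Finally, the expander assumption applied to $g$ gives $\|Pg\|_\pi^2\leq \lambda^2\|g\|_\pi^2=\lambda^2\pi(S)(1-\pi(S))$, matching the claimed bound. There is no real obstacle here; the one point worth emphasizing is that the inequality $\|Ph\|_\pi\leq\lambda\|h\|_\pi$ for $h\perp\mathbf{1}$ requires $\lambda=\max\{|\lambda_2|,|\lambda_n|\}$ rather than just $\lambda_2$, which is precisely why the paper's definition of $\lambda$-expander controls both ends of the spectrum of $P$.
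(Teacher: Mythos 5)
Your proof is correct and complete. Note that the paper itself does not prove this lemma: it imports it verbatim as Lemma~3 of \cite{CRRS17} (restated in the appendix as \cref{lem:square}, again without proof; only the corollary $\sum_{v}\pi(v)(P(v,S)-\pi(S))^2\leq\lambda^2\pi(S)(1-\pi(S))$ is derived from it). So there is no in-paper argument to compare against, but your spectral derivation is the standard one behind the cited result: the identification $\sum_v\pi(v)P(v,S)^2=\|P\mathbbm{1}_S\|_\pi^2$, the orthogonal splitting $\mathbbm{1}_S=\pi(S)\mathbf{1}+g$ with $\|g\|_\pi^2=\pi(S)(1-\pi(S))$, self-adjointness of $P$ in $L^2(\pi)$ from reversibility, and the contraction $\|Pg\|_\pi\leq\lambda\|g\|_\pi$ on $\mathbf{1}^{\perp}$ are all exactly right. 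Your argument even yields slightly more than the statement asks for, namely that $\sum_v\pi(v)P(v,S)^2-\pi(S)^2=\|Pg\|_\pi^2\geq 0$, which is precisely the identity the paper's \cref{lem:second_approx_lambda} exploits. Your closing remark is also on point: the one-sided bound via $\lambda_2$ alone would not suffice, and the paper's definition of $\lambda$-expander as controlling $\max\{|\lambda_2|,|\lambda_n|\}$ is what makes the contraction estimate legitimate.
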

Then, from the Hoeffding bound, we have
$\E[\pi(A')-1/2]\geq  (1+\epsilon)(\pi(A)-1/2)-O(\lambda^2+\|\pi\|_2\sqrt{\log n}))$.
Thus, if the initial bias $|\pi(A)-1/2|$ is $\Omega(\max\{\lambda^2,\sqrt{\log n/n}\})$, we can show that the consensus time is $O(\log n)$.
 

Unfortunately, we can not apply the same technique to estimate $\Var[\pi(A')]$ on expander graphs, and due to this reason, it seems difficult to estimate the worst-case consensus time on expander graphs.
Actually, any previous works 
put assumptions on the initial bias due to the same reason.
It should be noted that
\cref{lem:nazolemma_informal}
is well-known in the literature.
For example, Cruciani et al.~\cite{CNS19} used
\cref{lem:nazolemma_informal}
from random initial configurations.

The technique of estimating $\E[\pi(A')]$ by Cooper et al.~\cite{CERRS15,CRRS17} is specialized in \btwo.
Thus, it is not straightforward
to prove the estimation
of $\E[\pi(A')]$ for
voting processes other than \btwo.

\subsection{Our technical contribution}
For simplicity, in this part, we focus on a \qmaj \fvoting with respect to a {\em symmetric} function $f$ (i.e.,  $f(1-x)=1-f(x)$ for every $x\in[0,1]$) on a $\lambda$-expander graph with degree distribution $\pi$.
For example, $f(x)=3x^2-2x^3$ of \bthree is a symmetric function.
Note that $f=H_f$ if $f$ is symmetric.
Similar results mentioned in this subsection holds for non-symmetric $f$ (see \cref{sec:estimate_nonsymmetric}).
For a $C^2$ function $h:\mathbbm{R}\to\mathbbm{R}$, let
\begin{align*}
    K_1(h)\defeq \max_{x\in [0,1]}\left|h'(x)\right|, \hspace{1em}
    K_2(h)\defeq \max_{x\in [0,1]}\left|h''(x)\right|
\end{align*}
be some constants\footnote{For example, for $f(x)=3x^2-2x^3$ of \bthree, $f''(x)=6-12x$ and $K_2(f)=6$.
It should be noted that we deal with $f$ not depending on $G$ except for \bk with $k=\omega(1)$ in \cref{sec:bokproof}.} depending only on $h$.
The following technical result enables us to estimate $\E[\pi(A')]$ and $\Var[\pi(A')]$ of \fvoting.
\begin{lemma} 
\label{lem:expectationandvariance_symmetry}
Consider a \fvoting with respect to a symmetric $C^2$ function $f$ on a $\lambda$-expander graph with degree distribution $\pi$.
Let $g(x)\defeq f(x)(1-f(x))$.
Then, for all $A\subseteq V$,
\begin{align*}
    &\bigl|\E[\pi(A')]-H_f(\pi(A)) \bigr| \leq \frac{K_2(f)}{2}\lambda^2\pi(A)\bigl(1-\pi(A)\bigr),\\
    &\Bigl|\Var[\pi(A')]-\|\pi\|_2^2g\bigl(\pi(A)\bigr)\Bigr|\leq K_1(g)\lambda\sqrt{\pi(A)\bigl(1-\pi(A)\bigr)}\|\pi\|^{3/2}_3.
\end{align*}
\end{lemma}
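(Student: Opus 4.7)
The plan is to exploit the stationarity of $\pi$ with respect to the transition matrix $P$ of the simple random walk (so that first-order contributions vanish when we average $P(v,A)$ against $\pi$), and then control the second-order error by the expander bound stated in \cref{lem:square2}.

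The first observation I would record is that symmetry of $f$ forces the update probability to be uniform in form across $A$ and $B$: for $v\in B$, $\Pr[X_v=1]=f(P(v,A))$, while for $v\in A$, $\Pr[X_v=1]=1-f(P(v,B))=1-f(1-P(v,A))=f(P(v,A))$, using $f(1-x)=1-f(x)$. Likewise, $H_f(x)=x(1-f(1-x))+(1-x)f(x)=xf(x)+(1-x)f(x)=f(x)$. So the claim I must prove reduces to
\begin{align*}
    \Bigl|\sum_{v\in V}\pi(v)f(P(v,A))-f(\pi(A))\Bigr| &\leq \frac{K_2(f)}{2}\lambda^2\pi(A)(1-\pi(A)),\\
    \Bigl|\sum_{v\in V}\pi(v)^2 g(P(v,A))-\|\pi\|_2^2 g(\pi(A))\Bigr| &\leq K_1(g)\lambda\sqrt{\pi(A)(1-\pi(A))}\,\|\pi\|_3^{3/2}.
\end{align*}
For the second display I also use that the $X_v$ are independent, so $\Var[\pi(A')]=\sum_v\pi(v)^2 f(P(v,A))(1-f(P(v,A)))=\sum_v\pi(v)^2 g(P(v,A))$.

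For the expectation bound, I would Taylor expand $f$ around $\pi(A)$:
\begin{align*}
    f(P(v,A))=f(\pi(A))+f'(\pi(A))(P(v,A)-\pi(A))+\tfrac{1}{2}f''(\xi_v)(P(v,A)-\pi(A))^2,
\end{align*}
and average against $\pi(v)$. The key identity $\sum_v\pi(v)P(v,w)=\pi(w)$ (reversibility of the simple random walk: $\pi(v)P(v,w)$ is symmetric in $v,w$) gives $\sum_v\pi(v)P(v,A)=\pi(A)$, which annihilates the linear term. The quadratic remainder is at most $\tfrac{K_2(f)}{2}\sum_v\pi(v)(P(v,A)-\pi(A))^2=\tfrac{K_2(f)}{2}(\sum_v\pi(v)P(v,A)^2-\pi(A)^2)$, and \cref{lem:square2} bounds this by $\tfrac{K_2(f)}{2}\lambda^2\pi(A)(1-\pi(A))$.

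For the variance bound, I would apply the mean value theorem to $g$ to replace $|g(P(v,A))-g(\pi(A))|$ by $K_1(g)|P(v,A)-\pi(A)|$, giving
\begin{align*}
    \Bigl|\sum_v\pi(v)^2\bigl(g(P(v,A))-g(\pi(A))\bigr)\Bigr|\leq K_1(g)\sum_v\pi(v)^2|P(v,A)-\pi(A)|.
\end{align*}
The main technical step, and the one I expect to be the most delicate, is to reshape this so that the expander estimate can enter. I would write $\pi(v)^2=\pi(v)^{3/2}\cdot \pi(v)^{1/2}$ and apply Cauchy--Schwarz to obtain
\begin{align*}
    \sum_v\pi(v)^2|P(v,A)-\pi(A)|\leq \Bigl(\sum_v\pi(v)^3\Bigr)^{1/2}\Bigl(\sum_v\pi(v)(P(v,A)-\pi(A))^2\Bigr)^{1/2}.
\end{align*}
The first factor is exactly $\|\pi\|_3^{3/2}$, and the second factor is at most $\lambda\sqrt{\pi(A)(1-\pi(A))}$ by \cref{lem:square2} again (after expanding the square and using stationarity as above). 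Combining these yields the stated variance inequality. The non-symmetric case will require decomposing the sum into contributions from $A$ and $B$ with two different Taylor expansions (one per opinion), but the structure is identical; the symmetric derivation above makes the algebra transparent.
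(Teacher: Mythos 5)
Your proposal is correct and follows essentially the same route as the paper: the paper also first uses symmetry of $f$ to reduce to $\E[\pi(A')]=\sum_v\pi(v)f(P(v,A))$ and $\Var[\pi(A')]=\sum_v\pi(v)^2g(P(v,A))$, then proves the expectation bound by a second-order Taylor expansion whose linear term is killed by $\sum_v\pi(v)P(v,A)=\pi(A)$ and whose remainder is controlled by $\sum_v\pi(v)(P(v,A)-\pi(A))^2\le\lambda^2\pi(A)(1-\pi(A))$, and proves the variance bound by the Lipschitz estimate plus exactly your Cauchy--Schwarz splitting $\pi(v)^2=\pi(v)^{3/2}\pi(v)^{1/2}$. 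The only cosmetic difference is that the paper packages these two computations as general lemmas ($Q_h$ and $R_h$ for arbitrary $S,T$ and reversible $P$) before specializing to $S=V$.
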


Note that, if $f$ is symmetric, the corresponding \fvoting satisfies that $\Pr[v\in A']=f(P(v,A))$ for any $v\in V$.
Thus we have
\begin{align*}
\E[\pi(A')]
=\sum_{v\in V}\pi(v)f\bigl(P(v,A)\bigr), \hspace{1em}
\Var[\pi(A')]
=\sum_{v\in V}\pi(v)^2g\bigl(P(v,A)\bigr).
\end{align*}
To evaluate $\E[\pi(A')]$ and $\Var[\pi(A')]$ above, we prove the following key lemma that is a generalization of \cref{lem:square2} and implies \cref{lem:expectationandvariance_symmetry}.
\begin{lemma}[Special case of \cref{lem:second_approx,lem:approx_V}]
\label{lem:gen_square}
Consider a $\lambda$-expander graph with degree distribution $\pi$.
Then, for any $S\subseteq V$ and 
 any $C^2$ function $h:\mathbb{R}\to \mathbb{R}$,
\begin{align*}
&\left|\sum_{v\in V}\pi(v)h\bigl(P(v,S)\bigr)-h\bigl(\pi(S)\bigr)\right|
 \leq \frac{K_2(h)}{2}\lambda^2\pi(S)\bigl(1-\pi(S)\bigr), \\
&\left|\sum_{v\in V}\pi(v)^2h\bigl(P(v,S)\bigr)-\|\pi\|_2^2 h\bigl(\pi(S)\bigr)\right|\leq  K_1(h)\lambda\sqrt{\pi(S)\bigl(1-\pi(S)\bigr)}\|\pi\|_3^{3/2}.
\end{align*}
\end{lemma}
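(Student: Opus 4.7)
The plan is to reduce both inequalities to the second-moment estimate of Lemma~\ref{lem:square2} via a Taylor expansion of $h$ around the point $\pi(S)$. The key input beyond $C^2$ regularity of $h$ is the identity $\sum_{v\in V}\pi(v)P(v,S)=\pi(S)$, which holds because $\pi$ is the stationary distribution of the simple random walk with transition matrix $P$. This identity kills the first-order term of the Taylor expansion in the first bound and also lets us rewrite $\sum_v\pi(v)\bigl(P(v,S)-\pi(S)\bigr)^2 = \sum_v\pi(v)P(v,S)^2-\pi(S)^2$, so that Lemma~\ref{lem:square2} directly controls this variance-type quantity.

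For the first inequality I would write $h\bigl(P(v,S)\bigr) = h\bigl(\pi(S)\bigr) + h'\bigl(\pi(S)\bigr)\bigl(P(v,S)-\pi(S)\bigr) + \tfrac{1}{2}h''(\xi_v)\bigl(P(v,S)-\pi(S)\bigr)^2$ with $\xi_v$ between $P(v,S)$ and $\pi(S)$ (so $\xi_v\in[0,1]$), multiply by $\pi(v)$, and sum over $v$. The constant term gives $h(\pi(S))$, the linear term vanishes by stationarity, and the absolute value of the quadratic remainder is at most $\tfrac{K_2(h)}{2}\sum_v\pi(v)\bigl(P(v,S)-\pi(S)\bigr)^2$. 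Applying Lemma~\ref{lem:square2} to this variance yields the claimed bound $\tfrac{K_2(h)}{2}\lambda^2\pi(S)(1-\pi(S))$.

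For the second inequality the weights are $\pi(v)^2$ rather than $\pi(v)$, so the linear term no longer annihilates under summation; a first-order expansion $h\bigl(P(v,S)\bigr) = h\bigl(\pi(S)\bigr) + h'(\eta_v)\bigl(P(v,S)-\pi(S)\bigr)$ is the natural move. Subtracting $\|\pi\|_2^2\, h(\pi(S)) = \sum_v\pi(v)^2 h(\pi(S))$ and bounding $|h'(\eta_v)|\leq K_1(h)$ reduces the task to controlling $\sum_v\pi(v)^2\bigl|P(v,S)-\pi(S)\bigr|$. The plan is to split $\pi(v)^2=\pi(v)^{3/2}\cdot\pi(v)^{1/2}$ and apply Cauchy--Schwarz, which gives $\sum_v\pi(v)^2\bigl|P(v,S)-\pi(S)\bigr|\leq \|\pi\|_3^{3/2}\bigl(\sum_v\pi(v)(P(v,S)-\pi(S))^2\bigr)^{1/2}$. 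Lemma~\ref{lem:square2} bounds the square root by $\lambda\sqrt{\pi(S)(1-\pi(S))}$, and the claimed bound follows.

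Nothing in this plan looks conceptually hard; the only delicate point is the choice of exponent split in the Cauchy--Schwarz step for the second inequality. Picking the split $\pi(v)^{3/2}\cdot\pi(v)^{1/2}$ is what makes the variance-type sum controlled by Lemma~\ref{lem:square2} reappear on one side while producing precisely $\|\pi\|_3^{3/2}$ on the other. A different pair of H\"older exponents would yield a norm of $\pi$ that does not match the stated bound, so this particular split is essentially forced by the target estimate.
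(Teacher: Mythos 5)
Your proposal is correct and matches the paper's own argument: the paper proves the more general Lemmas~\ref{lem:second_approx} and \ref{lem:approx_V} (with an arbitrary summation set $S$) using exactly your Taylor expansion around $\pi(S)$, the cancellation of the linear term via $Q(V,S)=\pi(S)$ from reversibility, the variance identity reducing to Lemma~\ref{lem:square2}, and the Cauchy--Schwarz split $\pi(v)^{3/2}\cdot\pi(v)^{1/2}$, and then specializes to the case stated here. No gaps.
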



\subsection[Proof sketch of Theorem 1.3]{Proof sketch of \Cref{thm:mainthm}}
We present proof sketch of \cref{thm:mainthm}\cref{thm:worstcasefvoting}.
From the assumption of \cref{thm:mainthm}\cref{thm:worstcasefvoting}
and
\cref{lem:expectationandvariance_symmetry},
if 
$|\pi(A)-1/2|=o(1)$, we have
$\Var[\pi(A')]=\Theta(\|\pi\|_2^2g(\pi(A)))=\Theta(\|\pi\|_2^2g(1/2+o(1)))=\Theta(1/n)$.
Moreover,
$\E[\pi(A')]=H_f(\pi(A))\pm O(\pi(A)/\sqrt{n})$ holds for any $A\subseteq V$. Hence, from the Hoeffding bound, 
$\pi(A')=H_f(\pi(A))+O(\sqrt{\log n/n})$ holds w.h.p.~for any $A\subseteq V$.

\begin{itemize}
    \item If $|\pi(A)-1/2| =O(\sqrt{\log n/n})$, we use \cref{lem:nazolemma_informal} to obtain an $O(\log n)$ round symmetry breaking.
    In this phase, 
   since $|\pi(A)-1/2|=o(1)$, $\Var[\pi(A')-1/2]=\Theta(1/n)$.
    Then, from the Berry-Esseen theorem (\cref{lem:Berry-Esseen}), we can check the condition \cref{cond:nazo_cond1_informal}.
    To check the condition \cref{cond:nazo_cond2_informal}, we invoke the condition $H_f'(1/2)>1$ of the \qmaj function.
    From Taylor's theorem and the assumption of \cref{lem:nazolemma_informal}\cref{cond:nazo_cond2_informal} ($\pi(A)-1/2=\Omega(1/\sqrt{n})$), $\E[\pi(A')-1/2]=H_f(\pi(A))-H_f(1/2)-O(1/\sqrt{n})\approx (1+\epsilon_1)(\pi(A)-1/2)$ for some positive constant $\epsilon_1>0$. Note that $H_f(1/2)=1/2$.

    \item If $C_1\sqrt{\log n/n}\leq |\pi(A)-1/2| \leq C_2$ for sufficiently large constant $C_1$ and some constant $C_2>0$, we use the Hoeffding bound and then obtain 
    $\pi(A')-1/2\approx (1+\epsilon_1)(\pi(A)-1/2)-O(\sqrt{\log n/n})\geq (1+(\epsilon_1/2))(\pi(A)-1/2)$~w.h.p. Hence, $O(\log n)$ rounds suffice to yield a constant bias.
    (Note that this argument holds when $|\pi(A)-1/2|\leq C_2$ due to the remainder term of Taylor's theorem.)
    
    \item If $C_3\leq \pi(A)<1/2$, 
    it is straightforward to see that $\pi(A')=H_f(\pi(A))+O(\sqrt{\log n/n})\leq \pi(A)-\epsilon_2$ w.h.p.~for some constant $\epsilon_2>0$. Note that we invoke the property that $H_f(x)<x$ whenever $0<x<1/2$.
    
    \item If $\pi(A) \leq C_3$ for sufficiently small constant $C_3$, we use the Markov inequality to show $\pi(A_t)=O(n^{-3})$ w.h.p.~for some $t=O(\log n)$.
    Since $\pi(A)\geq 1/n^2$ whenever $A\neq \emptyset$, this implies that the consensus time is $O(\log n)$ w.h.p.
    Note that, since $H_f'(0)<1$, we have $\E[\pi(A')]\leq H_f(\pi(A))+O(\pi(A)/\sqrt{n})\approx H_f'(0)\pi(A)+O(\pi(A)/\sqrt{n})\leq (1-\epsilon_3)\pi(A)$ for some constant $\epsilon_3>0$.
\end{itemize}

In the proof of \cref{thm:bok}, we modify \cref{lem:nazolemma_informal} and 
apply the same argument.

\section[Estimation of E(pi(A')) and Var(pi(A'))]{Estimation of $\E[\pi(A')]$ and $\Var[\pi(A')]$}
\label{sec:proof_of_lem_gen_square}

In this section, we prove \cref{lem:gen_square} by showing \cref{lem:second_approx,lem:approx_V}, which are generalizations of \cref{lem:gen_square} in terms of \emph{reversible Markov chain}.
This enables us to evaluate $\E[\pi(A')]$ and $\Var[\pi(A')]$ for \fvoting with respect to a $C^2$ function $f$ (see~\cref{sec:estimate_nonsymmetric} for \fvoting with respect to non-symmetric $f$).


\subsection{Technical tools for reversible Markov chains}
To begin with, we briefly summarize the notation of Markov chain, which we will use in this section\footnote{For further detailed arguments about reversible Markov chains, see e.g.,~\cite{LP17}.}. 
Let $V$ be a set of size $n$.
A \emph{transition matrix} $P$ over $V$ is a matrix $P\in [0,1]^{V\times V}$ satisfying $\sum_{v\in V}P(u,v)=1$ for any $u\in V$.
Let $\pi\in [0,1]^V$ denote the {\em stationary distribution} of $P$, i.e., a probability distribution satisfying $\pi P=\pi$.
A transition matrix $P$ is \emph{reversible} if 
$
\pi(u)P(u,v)=\pi(v)P(v,u)
$
for any $u,v\in V$.
It is easy to check that 
the matrix \cref{def:SRW} is
a
reversible transition matrix and its stationary distribution is \cref{def:degreedist}. 
Let $\lambda_1\geq \cdots\geq \lambda_n$ denote the eigenvalues of $P$.
If $P$ is reversible, it is known that $\lambda_i\in \mathbb{R}$ for all~$i$. 
Let $\lambda=\max\{|\lambda_2|, |\lambda_n|\}$ be the second largest eigenvalue in absolute value\footnote{If $P$ is {\em ergodic}, i.e., for any $u,v\in V$, there exists a $t>0$ such that $P^t(u,v)>0$ and $\textrm{GCD}\{t>0: P^t(x,x)>0\}=1$, $1>\lambda_2$ and $\lambda_n>-1$. For example, the transition matrix of the simple random walk on a connected and non-bipartite graph is ergodic.}.

For a function $h:\mathbb{R}\to \mathbb{R}$ and subsets $S,T\subseteq V$, consider the quantity $Q_h(S,T)$ defined~as
\begin{align}
Q_h(S,T)\defeq \sum_{v\in S}\pi(v)h\bigl(P(v,T)\bigr).
\label{def:Qf}
\end{align}
The special case of $h(x)=x$, that is,
$
Q(S,T)\defeq \sum_{v\in S}\pi(v)P(v,T),
$
is well known as {\em edge measure}~\cite{LP17} or {\em ergodic flow}~\cite{AF02,MT06}.
Note that, for any reversible $P$ and subsets $S,T\subseteq V$, $Q(S,T)=Q(T,S)$ holds.
The following result is well known as a version of the {\em expander mixing lemma}.
\begin{lemma}[See, e.g.,~p.163 of \cite{LP17}] \label{lem:EML}
Suppose $P$ is reversible.
Then, for any $S,T\subseteq V$,
\begin{align*}
\left| Q(S,T) - \pi(S)\pi(T) \right| \leq \lambda\sqrt{\pi(S)\pi(T)\bigl(1-\pi(S)\bigr)\bigl(1-\pi(T)\bigr)}.
\end{align*}
\end{lemma}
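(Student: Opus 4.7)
The plan is to follow the standard spectral proof of the expander mixing lemma, adapted to the weighted/reversible Markov chain setting. The key idea is to symmetrize $P$ using the reversibility and then apply the spectral bound on the orthogonal complement of the top eigenvector.

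Concretely, I would let $D = \mathrm{diag}(\pi)$ and form the symmetric matrix $\tilde{P} \defeq D^{1/2} P D^{-1/2}$; reversibility is exactly what makes $\tilde{P}$ symmetric, and $\tilde{P}$ has the same spectrum as $P$. The quantity to control can then be rewritten as a bilinear form: setting $x \defeq D^{1/2}\mathbf{1}_S$ and $y \defeq D^{1/2}\mathbf{1}_T$, one checks $Q(S,T) = \mathbf{1}_S^{\top} D P \mathbf{1}_T = x^{\top} \tilde{P} y$, with $\|x\|_2^2 = \pi(S)$ and $\|y\|_2^2 = \pi(T)$.

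Next, I would decompose $x$ and $y$ along the top eigenvector of $\tilde{P}$, namely $\varphi \defeq (\sqrt{\pi(v)})_{v \in V}$, which satisfies $\tilde{P}\varphi = \varphi$ and $\|\varphi\|_2 = 1$. Writing $x = \langle x,\varphi\rangle\varphi + x^{\perp}$ and $y = \langle y,\varphi\rangle\varphi + y^{\perp}$, direct computation gives $\langle x,\varphi\rangle = \pi(S)$, $\langle y,\varphi\rangle = \pi(T)$, and therefore $\|x^{\perp}\|_2^2 = \pi(S)(1-\pi(S))$ and $\|y^{\perp}\|_2^2 = \pi(T)(1-\pi(T))$. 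Expanding the bilinear form and using $\tilde{P}\varphi = \varphi$ kills the two cross terms, leaving
\begin{align*}
Q(S,T) = \pi(S)\pi(T) + (x^{\perp})^{\top} \tilde{P}\, y^{\perp}.
\end{align*}

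Finally, I would invoke the defining property of $\lambda$: on the subspace orthogonal to $\varphi$, all eigenvalues of the symmetric matrix $\tilde{P}$ have absolute value at most $\lambda$, so $|(x^{\perp})^{\top}\tilde{P}\,y^{\perp}| \leq \lambda \|x^{\perp}\|_2 \|y^{\perp}\|_2$. Substituting the norm formulas for $x^{\perp}$ and $y^{\perp}$ yields the claimed bound. There is really no major obstacle: the only point requiring care is checking that reversibility makes $\tilde{P}$ symmetric (so Cauchy–Schwarz via the spectral norm is valid) and that $\varphi$ is indeed a unit eigenvector of $\tilde{P}$ with eigenvalue $1$, both of which are routine consequences of $\pi P = \pi$ together with the detailed balance equations.
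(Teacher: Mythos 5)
Your argument is correct and complete: reversibility does make $D^{1/2}PD^{-1/2}$ symmetric, the cross terms vanish as you say, and the spectral bound on the orthogonal complement of $\varphi=(\sqrt{\pi(v)})_{v\in V}$ gives exactly the stated inequality. The paper does not prove this lemma itself but cites it from \cite{LP17}, and your proof is essentially the standard argument given there (phrased in the conjugated $\ell^2$ space rather than directly in $\ell^2(\pi)$, which is an equivalent formulation).
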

We show the following lemma which gives a useful estimation of $Q_h(S,T)$.
\begin{lemma}\label{lem:second_approx}
Suppose $P$ is reversible. 
Then, for any $S, T\subseteq V$ and 
any $C^2$ function $h:\mathbb{R}\to \mathbb{R}$, 
\begin{align*}
\Bigl|Q_h(S,T)-\pi(S)h\bigl(\pi(T)\bigr)-h'\bigl(\pi(T)\bigr)\bigl(Q(S,T)-\pi(S)\pi(T)\bigr)\Bigr|
\leq \frac{K_2(h)}{2}\lambda^2\pi(T)\bigl(1-\pi(T)\bigr).
\end{align*}
\end{lemma}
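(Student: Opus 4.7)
\medskip

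\noindent\textbf{Proof plan.} The natural move is a second-order Taylor expansion of $h$ around the stationary ``prediction'' $\pi(T)$. For each $v\in V$, Taylor's theorem with Lagrange remainder gives some $\xi_v$ between $P(v,T)$ and $\pi(T)$ with
\begin{align*}
h\bigl(P(v,T)\bigr) = h\bigl(\pi(T)\bigr) + h'\bigl(\pi(T)\bigr)\bigl(P(v,T)-\pi(T)\bigr) + \tfrac{1}{2}h''(\xi_v)\bigl(P(v,T)-\pi(T)\bigr)^2.
\end{align*}
Multiplying by $\pi(v)$ and summing over $v\in S$, the constant term contributes $\pi(S)h(\pi(T))$, the linear term contributes $h'(\pi(T))\bigl(Q(S,T)-\pi(S)\pi(T)\bigr)$, and the quadratic remainder is bounded in absolute value by
\begin{align*}
\tfrac{K_2(h)}{2}\sum_{v\in S}\pi(v)\bigl(P(v,T)-\pi(T)\bigr)^2 \;\le\; \tfrac{K_2(h)}{2}\sum_{v\in V}\pi(v)\bigl(P(v,T)-\pi(T)\bigr)^2,
\end{align*}
where the last inequality just drops the restriction to $S$ since every summand is nonnegative.

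So the whole game reduces to showing
\begin{align*}
\sum_{v\in V}\pi(v)\bigl(P(v,T)-\pi(T)\bigr)^2 \;\le\; \lambda^2\,\pi(T)\bigl(1-\pi(T)\bigr).
\end{align*}
Expanding the square and using the stationarity identity $\sum_v\pi(v)P(v,T)=\pi(T)$ simplifies the left-hand side to $\sum_v\pi(v)P(v,T)^2-\pi(T)^2$. The plan is then a spectral argument in $L^2(\pi)$: write the indicator $\mathbbm{1}_T$ as $\pi(T)\mathbbm{1}+g$ with $g\perp_\pi \mathbbm{1}$, so that $\|g\|_\pi^2=\pi(T)(1-\pi(T))$. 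Since $P\mathbbm{1}=\mathbbm{1}$ and $g$ lies in the orthogonal complement of the Perron eigenspace, reversibility gives $\|Pg\|_\pi\le\lambda\|g\|_\pi$. Then
\begin{align*}
\sum_v\pi(v)P(v,T)^2 \;=\; \|P\mathbbm{1}_T\|_\pi^2 \;=\; \pi(T)^2 + \|Pg\|_\pi^2 \;\le\; \pi(T)^2 + \lambda^2\pi(T)(1-\pi(T)),
\end{align*}
which is exactly the bound required.

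Combining the Taylor expansion with this spectral estimate yields the claim. The step I expect to have to justify most carefully is the spectral bound $\|Pg\|_\pi\le\lambda\|g\|_\pi$; this is a standard consequence of the fact that a reversible $P$ is self-adjoint on $L^2(\pi)$ and hence diagonalizable with real eigenvalues bounded in modulus by $\lambda$ on the orthogonal complement of $\mathbbm{1}$, so it should be recorded as a reference rather than reproved. The Taylor-plus-spectral structure is exactly the generalization of \cref{lem:square2} used previously for the case $h(x)=x^2$ (where $h'(\pi(T))=2\pi(T)$ and the Lagrange remainder is constant, recovering that lemma up to the sign), and it will be applied in the companion lemma \cref{lem:approx_V} with $\pi(v)$ replaced by $\pi(v)^2$ to control $\Var[\pi(A')]$.
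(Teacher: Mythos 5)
Your proof is correct and takes essentially the same route as the paper: a second-order Taylor expansion of $h$ around $\pi(T)$, followed by bounding the remainder via $\sum_{v\in V}\pi(v)\bigl(P(v,T)-\pi(T)\bigr)^2\leq \lambda^2\pi(T)\bigl(1-\pi(T)\bigr)$. The only difference is cosmetic: the paper imports that last estimate from Lemma 3 of \cite{CRRS17} (restated as \cref{lem:second_approx_lambda}), whereas you sketch its standard spectral proof directly.
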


\begin{proof}[Proof of \cref{lem:second_approx}]
From Taylor's theorem, it holds for any $x,y\in [0,1]$ that
\begin{align*}
\left|h(x)-h(y)-h'(y)(x-y)\right|\leq \frac{K_2(h)}{2}(x-y)^2.
\end{align*}
Hence
\begin{align*}
\lefteqn{\Bigl|Q_h(S,T)-\pi(S)h\bigl(\pi(T)\bigr)-h'\bigl(\pi(T)\bigr)\bigl(Q(S,T)-\pi(S)\pi(T)\bigr)\Bigr|} \\
&=\left|\sum_{v\in S}\pi(v)\Bigl(h\bigl(P(v,T)\bigr)-h\bigl(\pi(T)\bigr)-h'\bigl(\pi(T)\bigr)\bigl(P(v,T)-\pi(T)\bigr)\Bigr)\right| \\ 
&\leq \sum_{v\in S}\pi(v)\Bigl|h\bigl(P(v,T)\bigr)-h\bigl(\pi(T)\bigr)-h'\bigl(\pi(T)\bigr)\bigl(P(v,T)-\pi(T)\bigr)\Bigr| \\
&\leq \sum_{v\in S}\pi(v)\frac{K_2(h)}{2}\bigl(P(v,T)-\pi(T)\bigr)^2
\leq \frac{K_2(h)}{2}\sum_{v\in V}\pi(v)\bigl(P(v,T)-\pi(T)\bigr)^2\\
&\leq \frac{K_2(h)}{2}\lambda^2\pi(T)\bigl(1-\pi(T)\bigr).
\end{align*}
Note that the last inequality follows from \cref{lem:second_approx_lambda}.
\end{proof}
%

Next, consider
\begin{align}
R_h(S,T)\defeq \sum_{v\in S}\pi(v)^2h\bigl(P(v,T)\bigr)
\label{def:Rf}
\end{align}
for a function $h:\mathbb{R}\to \mathbb{R}$ and $S,T\subseteq V$.
For notational convenience, for $S\subseteq  V$, let
$
\pi_2(S)\defeq \sum_{v\in S}\pi(v)^2.
$
We show the following lemma that evaluates $R_h(S,T)$.
\begin{lemma}\label{lem:approx_V}
Suppose that $P$ is reversible. 
Then, for any $S, T\subseteq V$ and any $C^2$ function $h:\mathbb{R}\to \mathbb{R}$, 
\begin{align*}
\left|R_h(S,T)-\pi_2(S) h\bigl(\pi(T)\bigr)\right|&\leq K_1(h)\|\pi\|_3^{3/2} \lambda\sqrt{\pi(T)\bigl(1-\pi(T)\bigr)}.
\end{align*}
\end{lemma}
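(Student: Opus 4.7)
My plan is to start from the pointwise Lipschitz bound on $h$ coming from the mean value theorem, and then reduce the resulting weighted sum to a quantity that the spectral gap already controls. Since $h$ is $C^2$ and $K_1(h)=\max_{x\in[0,1]}|h'(x)|$, we have
\begin{align*}
\bigl|h(P(v,T))-h(\pi(T))\bigr|\le K_1(h)\,\bigl|P(v,T)-\pi(T)\bigr|
\end{align*}
for every $v\in V$. Using $\pi_2(S)h(\pi(T))=\sum_{v\in S}\pi(v)^2 h(\pi(T))$ and the triangle inequality, this immediately gives
\begin{align*}
\bigl|R_h(S,T)-\pi_2(S)h(\pi(T))\bigr|\le K_1(h)\sum_{v\in S}\pi(v)^2\bigl|P(v,T)-\pi(T)\bigr|.
\end{align*}

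The next step is to split the weight $\pi(v)^2$ as $\pi(v)^{3/2}\cdot\pi(v)^{1/2}$ and apply the Cauchy--Schwarz inequality so that one factor collects the norm $\|\pi\|_3$ and the other collects a Dirichlet-type sum that the expander mixing machinery controls. Explicitly,
\begin{align*}
\sum_{v\in S}\pi(v)^2\bigl|P(v,T)-\pi(T)\bigr|
&=\sum_{v\in S}\pi(v)^{3/2}\cdot\pi(v)^{1/2}\bigl|P(v,T)-\pi(T)\bigr| \\
&\le \left(\sum_{v\in S}\pi(v)^3\right)^{1/2}\left(\sum_{v\in S}\pi(v)\bigl(P(v,T)-\pi(T)\bigr)^2\right)^{1/2}.
\end{align*}
The first factor is at most $\|\pi\|_3^{3/2}$ since we may enlarge the sum to all of $V$. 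Extending the second factor to all of $V$ as well, I would invoke the spectral bound already used inside the proof of \cref{lem:second_approx} (the same ingredient the authors reference as \cref{lem:second_approx_lambda}), namely
\begin{align*}
\sum_{v\in V}\pi(v)\bigl(P(v,T)-\pi(T)\bigr)^2\le \lambda^2\pi(T)\bigl(1-\pi(T)\bigr),
\end{align*}
which holds for any reversible $P$ because the left-hand side is the $\pi$-variance of the vector $P\mathbf{1}_T$ and the nontrivial part of $\mathbf{1}_T$ expanded in the eigenbasis of $P$ has norm at most $\sqrt{\pi(T)(1-\pi(T))}$. Multiplying the two factors and the constant $K_1(h)$ yields exactly the stated bound
\begin{align*}
\bigl|R_h(S,T)-\pi_2(S)h(\pi(T))\bigr|\le K_1(h)\,\|\pi\|_3^{3/2}\,\lambda\sqrt{\pi(T)(1-\pi(T))}.
\end{align*}

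There is no real obstacle here once one decides to estimate by a first-order Lipschitz bound rather than the Taylor expansion used in \cref{lem:second_approx}; the only subtle choice is the Cauchy--Schwarz split $\pi(v)^2=\pi(v)^{3/2}\pi(v)^{1/2}$, which is dictated by the target norm $\|\pi\|_3^{3/2}$ on the right-hand side. The reason a second-order expansion (as in \cref{lem:second_approx}) is not useful here is that the weight $\pi(v)^2$ already biases the sum toward high-degree vertices, so one cannot afford to waste the linear term $h'(\pi(T))(Q(S,T)-\pi(S)\pi(T))$; Cauchy--Schwarz applied directly to $|P(v,T)-\pi(T)|$ is both cleaner and sufficient for the application in \cref{lem:expectationandvariance_symmetry}.
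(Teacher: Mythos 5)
Your proof is correct and follows essentially the same route as the paper's: the Lipschitz bound $|h(x)-h(y)|\le K_1(h)|x-y|$, the triangle inequality, the Cauchy--Schwarz split $\pi(v)^2=\pi(v)^{3/2}\cdot\pi(v)^{1/2}$, and the spectral variance bound $\sum_{v}\pi(v)(P(v,T)-\pi(T))^2\le\lambda^2\pi(T)(1-\pi(T))$, which the paper isolates as \cref{lem:second_approx_lambda}. The only cosmetic difference is the order in which you enlarge the summation from $S$ to $V$ relative to applying Cauchy--Schwarz, which does not affect the argument.
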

\begin{proof}

We first observe that
\begin{align}
\bigl|h(x)-h(y)\bigr|\leq K_1(h)|x-y|
\label{eq:K_1lip}
\end{align}
holds for any $x,y\in [0,1]$ from Taylor's theorem.
Hence,
\begin{align*}
\lefteqn{\Bigl|R_h(S,T)-\pi_2(S)h\bigl(\pi(T)\bigr)\Bigr|}\\
&=\left|\sum_{v\in S}\pi(v)^2\Bigl(h\bigl(P(v,T)\bigr)-h\bigl(\pi(T)\bigr)\Bigr)\right| 
\leq \sum_{v\in S}\pi(v)^2\Bigl|h\bigl(P(v,T)\bigr)-h\bigl(\pi(T)\bigr)\Bigr|  \\
&\leq \sum_{v\in S}\pi(v)^2K_1(h)\bigl|P(v,T)-\pi(T)\bigr| 
\leq K_1(h)\sum_{v\in V}\pi(v)^2\bigl|P(v,T)-\pi(T)\bigr|. 
\end{align*}
Then, applying the Cauchy-Schwarz inequality and \cref{lem:second_approx_lambda}, 
\begin{align*}
\sum_{v\in V}\pi(v)^2\bigl|P(v,T)-\pi(T)\bigr|
&\leq \sqrt{\left(\sum_{v\in V}\pi(v)^3\right)\left(\sum_{v\in V}\pi(v)\bigl(P(v,T)-\pi(T)\bigr)^2\right)}\\
&\leq \|\pi\|_3^{3/2} \lambda\sqrt{\pi(T)\bigl(1-\pi(T)\bigr)}
\end{align*}
and we obtain the claim.
\end{proof}

\begin{remark}
The results of this paper can be extended to voting processes
where the sampling probability is
determined by a reversible
transition matrix $P$.
This includes voting processes
on edge-weighted graphs $G=(V,E,w)$, where $w:E\to \mathbbm{R}$ denotes an edge weight function.
Consider the transition matrix
$P$ defined as follows: $P(u,v)= w(\{u,v\})/\sum_{x:\{u,x\}\in E}w(\{u,x\})$ for $\{u,v\}\in E$ and $P(u,v)=0$ for $\{u,v\}\notin E$.
A \emph{weighted} \fvoting with respect to $f$
is determined by $\Pr[v\in A'|v\in B]=f(P(v,B))$ and $\Pr[v\in B'|v\in A]=f(P(v,A))$.
For simplicity, in this paper, we do not explore the weighted variant and focus on the usual setting where $P$ is the matrix \cref{def:SRW} and its stationary distribution $\pi$ is \cref{def:degreedist}.
\end{remark}

\subsection[Proof of Lemma 2.4]{Proof of \cref{lem:gen_square}}
For the first inequality, by substituting $V$ to $S$ of \cref{lem:second_approx}, we obtain
\begin{align*}
    \Bigl|Q_h(V,T)-h\bigl(\pi(T)\bigr)\Bigr|
& \leq \frac{K_2(h)}{2}\lambda^2\pi(T)\bigl(1-\pi(T)\bigr).
\end{align*}
Note that $Q(V,T)=Q(T,V)=\pi(T)$ from the reversibility of $P$.
Similarly, we obtain the second inequality by substituting $V$ to $S$ of \cref{lem:approx_V}.\hspace{\fill}\qedsymbol

%

\subsection[Non-symmetric functions]{Non-symmetric functions}
\label{sec:estimate_nonsymmetric}
This section is devoted to 
evaluate $\E[\pi(A')]$ and $\Var[\pi(A')]$ 
for non-symmetric $f$.
To be more specifically, we prove the following.
\begin{lemma} \label{lem:expectation}
Consider a \fvoting with respect to a $C^2$ function $f$ on a $\lambda$-expander graph.
Then, 
for all $A\subseteq V$, 
\begin{align*}
\left|\E[\pi(A')]-H_f\bigl(\pi(A)\bigr)\right|
&\leq K_2(f)\lambda \bigl(|2\pi(A)-1|+\lambda\bigr)\pi(A)\bigl(1-\pi(A)\bigr). 
\end{align*}
\end{lemma}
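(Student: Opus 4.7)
The plan is to mimic the symmetric-case analysis but to work with \emph{two} edge-measure terms instead of one, so that we can track how the non-symmetry of $f$ contributes an additional first-order error. First I would unfold the definitions. Using \cref{eq:Xvdef} and the notation $Q_f$ from \cref{def:Qf},
\begin{align*}
\E[\pi(A')] &= \sum_{v\in B}\pi(v)f\bigl(P(v,A)\bigr)+\sum_{v\in A}\pi(v)\bigl(1-f\bigl(P(v,B)\bigr)\bigr) \\
 &= Q_f(B,A)+\pi(A)-Q_f(A,B),
\end{align*}
while $H_f(\pi(A))=\pi(A)-\pi(A)f(\pi(B))+\pi(B)f(\pi(A))$ with $\pi(B)=1-\pi(A)$. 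Therefore
\begin{align*}
\E[\pi(A')]-H_f\bigl(\pi(A)\bigr) = \bigl(Q_f(B,A)-\pi(B)f(\pi(A))\bigr)-\bigl(Q_f(A,B)-\pi(A)f(\pi(B))\bigr).
\end{align*}

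Next I would apply \cref{lem:second_approx} to each of the two parenthesized differences, which gives first-order approximations in terms of edge measures plus a remainder of order $\tfrac{K_2(f)}{2}\lambda^{2}\pi(A)\pi(B)$ each (here I use $\pi(B)(1-\pi(B))=\pi(A)(1-\pi(A))$). After subtraction, the zeroth-order terms cancel exactly, and the first-order terms combine into
\begin{align*}
f'(\pi(A))\bigl(Q(B,A)-\pi(B)\pi(A)\bigr)-f'(\pi(B))\bigl(Q(A,B)-\pi(A)\pi(B)\bigr) = \bigl(f'(\pi(A))-f'(\pi(B))\bigr)\Delta,
\end{align*}
where $\Delta\defeq Q(A,B)-\pi(A)\pi(B)$ and I use reversibility $Q(A,B)=Q(B,A)$ to factor. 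The two remainder terms sum to at most $K_2(f)\lambda^{2}\pi(A)\bigl(1-\pi(A)\bigr)$.

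Finally I would bound the surviving first-order contribution. By Taylor's theorem applied to $f'$, $|f'(\pi(A))-f'(\pi(B))|\leq K_2(f)|\pi(A)-\pi(B)| = K_2(f)|2\pi(A)-1|$, and the expander mixing lemma (\cref{lem:EML}) together with $\pi(A)+\pi(B)=1$ yields $|\Delta|\leq \lambda\pi(A)(1-\pi(A))$. Multiplying and adding the remainder gives exactly
\begin{align*}
\bigl|\E[\pi(A')]-H_f(\pi(A))\bigr|\leq K_2(f)\lambda\bigl(|2\pi(A)-1|+\lambda\bigr)\pi(A)\bigl(1-\pi(A)\bigr),
\end{align*}
as desired. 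I expect no real obstacle; the only subtle point is to notice that for symmetric $f$ one has $f'(\pi(A))=f'(\pi(B))$, which makes the first-order term vanish and recovers the cleaner bound of \cref{lem:expectationandvariance_symmetry}. In the non-symmetric setting this cancellation fails, and quantifying the resulting discrepancy via the Lipschitz constant of $f'$ is precisely what produces the extra $|2\pi(A)-1|\lambda$ factor.
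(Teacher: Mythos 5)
Your proposal is correct and follows essentially the same route as the paper's proof: both decompose $\E[\pi(A')]-H_f(\pi(A))$ into the two differences $Q_f(B,A)-\pi(B)f(\pi(A))$ and $Q_f(A,B)-\pi(A)f(\pi(B))$, apply \cref{lem:second_approx} to each, use reversibility to combine the surviving first-order terms into $\bigl(f'(\pi(A))-f'(\pi(B))\bigr)\bigl(Q(A,B)-\pi(A)\pi(B)\bigr)$, and bound that factor by Taylor's theorem together with \cref{lem:EML}. The constants and the final bound match the paper's exactly.
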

\begin{lemma} \label{lem:variance}
Consider a \fvoting with respect to a $C^2$ function $f$ on a $\lambda$-expander graph.
Let $g(x)\defeq f(x)(1-f(x))$.
Then,
for all $A\subseteq V$,
\begin{align*}
\left|\Var[\pi(A')]-\|\pi\|_2^2g\left(\frac{1}{2}\right)\right|&\leq K_1(g)\left(\frac{1}{2}\|\pi\|_2^2\left|2\pi(A)-1\right|+2\|\pi\|_3^{3/2}\lambda \sqrt{\pi(A)\bigl(1-\pi(A)\bigr)}\right).
\end{align*}
\end{lemma}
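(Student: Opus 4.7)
The plan is to compute $\Var[\pi(A')]$ exactly in terms of the $R_g$ quantities from \cref{sec:proof_of_lem_gen_square} and then apply \cref{lem:approx_V}. First I would expand $\pi(A') = \sum_{v \in V} \pi(v) X_v$ using the indicators $X_v$ from \cref{eq:Xvdef}, and use the fact that the $X_v$ are independent to get $\Var[\pi(A')] = \sum_{v \in V} \pi(v)^2 \Var[X_v]$. For $v \in B$, $X_v$ is Bernoulli with parameter $f(P(v,A))$, so $\Var[X_v] = g(P(v,A))$; for $v \in A$, $X_v$ equals $1$ minus a Bernoulli with parameter $f(P(v,B))$, so $\Var[X_v] = g(P(v,B))$. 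Hence
\begin{align*}
\Var[\pi(A')] = \sum_{v \in A} \pi(v)^2 g\bigl(P(v,B)\bigr) + \sum_{v \in B} \pi(v)^2 g\bigl(P(v,A)\bigr) = R_g(A,B) + R_g(B,A).
\end{align*}

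Next, I would apply \cref{lem:approx_V} to each of the two terms with $h = g$. Since $g$ is $C^2$ (as $f$ is $C^2$), this gives
\begin{align*}
\bigl|R_g(A,B) - \pi_2(A) g(\pi(B))\bigr| &\leq K_1(g)\|\pi\|_3^{3/2} \lambda \sqrt{\pi(B)(1-\pi(B))},\\
\bigl|R_g(B,A) - \pi_2(B) g(\pi(A))\bigr| &\leq K_1(g)\|\pi\|_3^{3/2} \lambda \sqrt{\pi(A)(1-\pi(A))}.
\end{align*}
Since $\pi(B) = 1 - \pi(A)$, the square-root factors are equal, and summing via the triangle inequality yields an approximation of $\Var[\pi(A')]$ by $\pi_2(A) g(\pi(B)) + \pi_2(B) g(\pi(A))$, up to an error of $2 K_1(g) \|\pi\|_3^{3/2} \lambda \sqrt{\pi(A)(1-\pi(A))}$, matching the second term of the claimed bound.

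It remains to compare $\pi_2(A) g(\pi(B)) + \pi_2(B) g(\pi(A))$ with $\|\pi\|_2^2 g(1/2)$. Using the Lipschitz estimate \cref{eq:K_1lip} applied to $g$, together with $|\pi(A) - 1/2| = |\pi(B) - 1/2| = \tfrac{1}{2}|2\pi(A)-1|$, I get $|g(\pi(A)) - g(1/2)| \leq \tfrac{K_1(g)}{2}|2\pi(A)-1|$ and likewise for $g(\pi(B))$. Since $\pi_2(A) + \pi_2(B) = \|\pi\|_2^2$, this gives
\begin{align*}
\Bigl| \pi_2(A) g(\pi(B)) + \pi_2(B) g(\pi(A)) - \|\pi\|_2^2 g(1/2) \Bigr| \leq \tfrac{K_1(g)}{2} \|\pi\|_2^2 \bigl|2\pi(A)-1\bigr|,
\end{align*}
which supplies the first term of the claimed bound. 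A final triangle inequality combines the two error estimates.

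There is no substantial obstacle here once \cref{lem:approx_V} is in place; the only care needed is in bookkeeping the independence of the $X_v$'s (so that the covariance cross-terms vanish) and in verifying that the Lipschitz constant $K_1(g)$ of $g = f(1-f)$ is indeed controlled by $\max_{x \in [0,1]}|g'(x)|$, which is immediate from the definition. The statement then follows by adding the two error bounds.
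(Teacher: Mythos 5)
Your proposal is correct and follows essentially the same route as the paper: decompose $\Var[\pi(A')]$ into $R_g(A,B)+R_g(B,A)$ via independence of the $X_v$, apply \cref{lem:approx_V} to each term, and then compare $\pi_2(A)g(\pi(B))+\pi_2(B)g(\pi(A))$ with $\|\pi\|_2^2 g(1/2)$ using the Lipschitz bound \cref{eq:K_1lip} together with $\pi_2(A)+\pi_2(B)=\|\pi\|_2^2$. The only cosmetic difference is that you write the Lipschitz error as $\tfrac{K_1(g)}{2}\|\pi\|_2^2|2\pi(A)-1|$ directly, whereas the paper states it as $K_1(g)\|\pi\|_2^2|\pi(A)-\tfrac12|$; these are identical.
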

Recall that 
we use $B=V\setminus A$ for $A\subseteq V$.
Then, it is clear that
\begin{align}
    &\E[\pi(A')]
    =\pi(A)-\sum_{v\in A}\pi(v)f\bigl(P(v,B)\bigr)+\sum_{v\in B}\pi(v)f\bigl(P(v,A)\bigr) 
    \label{eq:EpiAtoQf}, \\
    &\Var[\pi(A')]
    =\sum_{v\in A}\pi(v)^2g\bigl(P(v,B)\bigr)+\sum_{v\in B}\pi(v)g\bigl(P(v,A)\bigr).
    \label{eq:VpiAtoRg}
\end{align}

\begin{proof}[Proof of \cref{lem:expectation}]
From \cref{def:fVoting}, \cref{eq:EpiAtoQf,def:Qf}, we have
\begin{align}
\E[\pi(A')]
&=\pi(A)-Q_f(A,B)+Q_f(B,A), \label{eq:expectedpiA} \\ 
H_f\bigl(\pi(A)\bigr)
&=\pi(A)-\pi(A)f\bigl(\pi(B)\bigr)+\pi(B)f\bigl(\pi(A)\bigr). \label{eq:HfpiA}
\end{align}
For notational convenience, for $S,T\subseteq V$, let 
\begin{align*}
\Delta_f(S,T)
&\defeq Q_f(S,T)-\pi(S)f\bigl(\pi(T)\bigr)-f'\bigl(\pi(T)\bigr)\bigl(Q(S,T)-\pi(S)\pi(T)\bigr)\\
&=Q_h(S,T)-\pi(S)f\bigl(\pi(T)\bigr)-f'\bigl(\pi(T)\bigr)\bigl(Q(T,S)-\pi(T)\pi(S)\bigr).
\end{align*}
The equality follows from the reversibility of $P$ (see~\cref{sec:proof_of_lem_gen_square}).
From \cref{lem:second_approx}, we have
\begin{align*}
    |\Delta_f(S,T)|\leq \frac{K_2(f)}{2}\lambda^2\pi(T)\bigl(1-\pi(T)\bigr).
\end{align*}
Then, combining \cref{eq:expectedpiA,eq:HfpiA}, we have
\begin{align*}
\lefteqn{\Bigl|\E[\pi(A')]-H_f\bigl(\pi(A)\bigr)\Bigr|}\\
&=\Bigl|
Q_f(B,A)-\pi(B)f\bigl(\pi(A)\bigr)
-Q_f(A,B)+\pi(A)f\bigl(\pi(B)\bigr)
\Bigr|\\
&=\Bigl|\Delta_f(B,A)+f'\bigl(\pi(A)\bigr)\bigl(Q(A,B)-\pi(A)\pi(B)\bigr)\Bigr. \\
&\hspace{2em} \Bigl.-\Delta_f(A,B)-f'\bigl(\pi(B)\bigr)\bigl(Q(A,B)-\pi(A)\pi(B)\bigr)\Bigr|\\
&\leq 
|\Delta_f(B,A)|+|\Delta_f(A,B)|+
\Bigl|f'\bigl(\pi(A)\bigr)-f'\bigl(\pi(B)\bigr)\Bigr|\bigl|Q(A,B)-\pi(A)\pi(B)\bigr|\\
&\leq  K_2(f)\lambda^2\pi(A)\pi(B)+K_2(f)\bigl|\pi(A)-\pi(B)\bigr|\lambda\pi(A)\pi(B).
\end{align*}
and we obtain the claim.
Note that the last inequality follows from Taylor's theorem~\cref{eq:K_1lip} and \cref{lem:EML}.
\end{proof}

\begin{proof}[Proof of \cref{lem:variance}]
From \cref{eq:VpiAtoRg,def:Rf}, 
\begin{align*}
\Var[\pi(A')]
=R_g(A,B)+R_g(B,A).
\end{align*}
Thus, applying \cref{lem:approx_V} yields
\begin{align}
\left|\Var\bigl[\pi(A')\bigr]-\Bigl(\pi_2(A)g\bigl(\pi(B)\bigr)+\pi_2(B)g\bigl(\pi(A)\bigr)\Bigr)\right|\leq 2K_1(g)\|\pi\|_3^{3/2}\lambda\sqrt{\pi(A)\pi(B)}.
\label{eq:variance_piA_gen}
\end{align}
Next, using Taylor's theorem \cref{eq:K_1lip},
\begin{align}
\lefteqn{\left|\pi_2(A)g\bigl(\pi(B)\bigr)+\pi_2(B)g\bigl(\pi(A)\bigr)-\|\pi\|_2^2g\left(\frac{1}{2}\right)\right|}\nonumber \\
&=\left| \pi_2(A)\left(g\bigl(\pi(B)\bigr)-g\left(\frac{1}{2}\right) \right)
+\pi_2(B)\left(g\bigl(\pi(A)\bigr)-g\left(\frac{1}{2}\right) \right)\right|\nonumber \\
&\leq K_1(g)\pi_2(A)\left|\pi(B)-\frac{1}{2}\right|+K_1(g)\pi_2(B)\left|\pi(A)-\frac{1}{2}\right|
=K_1(g)\|\pi\|_2^2\left|\pi(A)-\frac{1}{2}\right|. \label{eq:VarpiA_prof_2}
\end{align}
The last equality follows since $|\pi(A)-1/2|=|\pi(B)-1/2|$.
Combining \cref{eq:variance_piA_gen,eq:VarpiA_prof_2}, we obtain the claim.
\end{proof}

\section[Proofs of Theorem 1.3 and 1.5]{Proofs of \cref{thm:mainthm,thm:fastconsensus}}
\label{sec:dynamicsofpia}
Consider a \qmaj \fvoting with respect to $f$ on an $n$-vertex $\lambda$-expander graph with degree distribution $\pi$.
Let $A_0,A_1,\ldots,$ be the sequence given by the \fvoting with initial configuration $A_0\subseteq V$.
\Cref{thm:mainthm,thm:fastconsensus} follow from the following lemma.
\begin{lemma}
\label{lem:mainlem}
Consider a \qmaj \fvoting with respect to $f$ on an $n$-vertex $\lambda$-expander graph with degree distribution $\pi$.
Let 
$\epsilon_h(f)\defeq H_f'(1/2)-1$, 
$\epsilon_c(f)\defeq 1-H_f'(0)$ and 
$K(f)\defeq \max\{K_2(f),K_2(H_f)\}$ be three positive constants depending only on $f$.
Then, the following holds:
\begin{enumerate}[label=(\Roman*)]
    \item \label{lab:phase1}
    Let $C_1>0$ be an arbitrary constant and $\varepsilon:\mathbb{N}\to\mathbb{R}$ be an arbitrary function satisfying $\varepsilon(n)\to 0$ as $n\to \infty$.
    Suppose that $\lambda \leq C_1n^{-1/4}$, $\|\pi\|_2 \leq C_1/\sqrt{n}$ and $\|\pi\|_3\leq \varepsilon/\sqrt{n}$.
    Then, for any $A_0\subseteq V$ such that $|\delta(A_0)|\leq c_1 \log n/\sqrt{n}$ for an arbitrary constant $c_1>0$, $|\delta(A_t)|\geq c_1\log n/\sqrt{n}$ within $t=O(\log n)$ steps w.h.p.
    \item \label{lab:phase2}
    Suppose that $\lambda\leq \frac{\epsilon_h(f)}{2K(f)}$.
    Then, for any $A_0\subseteq V$ s.t.~$\frac{2\max\{K(f),8\}}{\epsilon_h(f)}\max\{\lambda^2,\|\pi\|_2\sqrt{\log n}\}\\ \leq |\delta(A_0)|\leq \frac{\epsilon_h(f)}{K(f)}$, $|\delta(A_t)|\geq \frac{\epsilon_h(f)}{K(f)}$ within $t=O(\log |\delta(A_0)|^{-1})$ steps w.h.p.
    \item \label{lab:phase3}
    Let $c_2, c_3$ be two arbitrary constants satisfying $0<c_2<c_3<1/2$ and $\epsilon(f)\defeq \min_{x\in [c_2, c_3]}\bigl(x-H_f(x)\bigr)$ be a positive constant depending $f, c_2, c_3$.
    Suppose that $\lambda\leq \frac{\epsilon(f)}{2K(f)}$ and $\|\pi\|_2\leq \frac{\epsilon(f)}{4\sqrt{\log n}}$.
    Then, for any $A_0\subseteq V$ satisfying $c_2\leq \pi(A_0)\leq c_3$, $\pi(A_t)\leq c_2$ within constant steps w.h.p.
    \item \label{lab:phase4}
    Suppose that $\lambda \leq \frac{\epsilon_c(f)}{2K(f)}$ and $\|\pi\|_2\leq
    \frac{\epsilon_c(f)^2}{32K(f)\sqrt{\log n}}
    $.
    Then, for any $A_0\subseteq V$ satisfying $\pi(A_0)\leq \frac{\epsilon_c(f)}{8K(f)}$, $\pi(A_t)=0$ within $t=O(\log n)$ steps w.h.p.
    \item \label{lab:phase5}
    Suppose that $H_f'(0)=0$, $\lambda \leq \frac{1}{10K(f)}$ and $\|\pi\|_2\leq \frac{1}{64K(f)\sqrt{\log n}}$.
    Then, for any $A_0\subseteq V$ satisfying $\pi(A_0)\leq \frac{1}{7K(f)}$, it holds w.h.p.~that $\pi(A_t)=0$ within $$t=O\left(\log \log n + \frac{\log n}{\log \lambda^{-1}} + \frac{\log n}{\log (\|\pi\|_2\sqrt{\log n})^{-1}}\right)\,  \textrm{steps}.$$ 
\end{enumerate}
\end{lemma}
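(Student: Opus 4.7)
The plan is to handle the five claims \cref{lab:phase1}--\cref{lab:phase5} separately according to the regime of $\pi(A_t)$, in each case combining the moment estimates of \cref{sec:estimate_nonsymmetric} with a concentration inequality. The workhorse is that, by \cref{lem:expectation,lem:variance}, conditioned on $A_t$ we have $\E[\pi(A_{t+1})]=H_f(\pi(A_t))\pm O(\lambda^2)$ and $\Var[\pi(A_{t+1})]=\Theta(\|\pi\|_2^2 g(\pi(A_t)))$ whenever $\pi(A_t)$ is bounded away from $\{0,1\}$. Writing $\pi(A_{t+1})=\sum_v \pi(v)X_v$ as a weighted sum of independent Bernoullis, Hoeffding's inequality then yields $|\pi(A_{t+1})-H_f(\pi(A_t))|=O(\lambda^2+\|\pi\|_2\sqrt{\log n})$ w.h.p.; what distinguishes the phases is the local Taylor behaviour of $H_f$.

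For \cref{lab:phase1} I would invoke \cref{lem:nazolemma_informal}. The drift condition \cref{cond:nazo_cond2_informal} uses $H_f(1/2)=1/2$, $H_f'(1/2)=1+\epsilon_h(f)$, and $H_f''(1/2)=0$ (which follows from the intrinsic symmetry $H_f(x)+H_f(1-x)=1$), so that $H_f(\pi(A))-1/2\geq (1+\epsilon_h(f))(\pi(A)-1/2)-O((\pi(A)-1/2)^3+\lambda^2)$; combined with Hoeffding this handles the regime $|\pi(A)-1/2|=\Omega(1/\sqrt{n})$. The anti-concentration condition \cref{cond:nazo_cond1_informal} requires $\Var[\pi(A_{t+1})]=\Theta(1/n)$, which follows from $\|\pi\|_2=\Theta(1/\sqrt{n})$ and $g(1/2)=f(1/2)(1-f(1/2))>0$, together with a Berry--Esseen bound applied to $(X_v)_v$ whose Lyapunov ratio $\|\pi\|_3^{3/2}/\|\pi\|_2^{3/2}$ tends to zero under the hypothesis $\|\pi\|_3\leq \varepsilon/\sqrt{n}$. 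For \cref{lab:phase2}, a second-order Taylor expansion around $1/2$ gives $H_f(\pi(A_t))-1/2\geq (1+3\epsilon_h(f)/4)(\pi(A_t)-1/2)$ in the stated range, and the lower bound on $|\delta(A_0)|$ ensures the linear drift dominates the Hoeffding noise, yielding $|\delta(A_{t+1})|\geq (1+\epsilon_h(f)/2)|\delta(A_t)|$ w.h.p. and hence geometric growth in $O(\log|\delta(A_0)|^{-1})$ steps.

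For \cref{lab:phase3}, the definition of $\epsilon(f)$ gives $H_f(\pi(A_t))\leq \pi(A_t)-\epsilon(f)$, and the hypotheses on $\lambda$ and $\|\pi\|_2$ keep the Hoeffding error below $\epsilon(f)/2$, so $\pi(A_{t+1})\leq \pi(A_t)-\epsilon(f)/2$ w.h.p. and constantly many steps exit $[c_2,c_3]$. For \cref{lab:phase4}, continuity of $H_f'$ with $H_f'(0)=1-\epsilon_c(f)$ yields $H_f(\pi(A_t))\leq (1-\epsilon_c(f)/2)\pi(A_t)$ once $\pi(A_t)\leq \epsilon_c(f)/(8K(f))$, and the hypothesis $\lambda\leq \epsilon_c(f)/(2K(f))$ absorbs the multiplicative error of \cref{lem:expectation} to give $\E[\pi(A_{t+1})\mid A_t]\leq (1-\epsilon_c(f)/4)\pi(A_t)$. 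Iterating expectations yields $\E[\pi(A_T)]\leq (1-\epsilon_c(f)/4)^T\pi(A_0)$; since $\pi(A)\geq 1/n^2$ whenever $A\neq\emptyset$, choosing $T=\Theta(\log n)$ and applying Markov's inequality shows $A_T=\emptyset$ w.h.p.

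For \cref{lab:phase5}, the extra assumption $H_f'(0)=0$ upgrades the Taylor estimate to $H_f(x)\leq (K(f)/2)x^2$; while $\pi(A_t)\gg \max\{\lambda,\|\pi\|_2\sqrt{\log n}\}$, the quadratic term dominates and iterating the Hoeffding estimate yields doubly exponential decay, producing the $O(\log\log n+\log|\delta(A_0)|^{-1})$ term. Once $\pi(A_t)$ drops below either threshold, the corresponding additive error takes over and decay becomes geometric with rate $\lambda$ or $\|\pi\|_2\sqrt{\log n}$, contributing the $\log n/\log\lambda^{-1}$ and $\log n/\log(\|\pi\|_2\sqrt{\log n})^{-1}$ terms; a final Markov argument as in \cref{lab:phase4} drives the chain to $\emptyset$. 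I expect the main obstacle to be \cref{lab:phase1}, specifically the Berry--Esseen verification of \cref{cond:nazo_cond1_informal}, since the third-moment control needed to make the Lyapunov ratio vanish is exactly what forces the $\|\pi\|_3\leq \varepsilon/\sqrt{n}$ hypothesis and delineates the graphs to which \cref{thm:mainthm}\cref{thm:worstcasefvoting} applies.
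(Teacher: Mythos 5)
Your proposal follows essentially the same route as the paper's proof: the same five-phase decomposition, the symmetry-breaking lemma \cref{lem:nazolemma} verified via Berry--Esseen (using $\|\pi\|_3=o(n^{-1/2})$ to kill the Lyapunov ratio) and a Taylor/Hoeffding drift bound around $1/2$ for Phase \cref{lab:phase1}, multiplicative drift for Phase \cref{lab:phase2}, constant drift for Phase \cref{lab:phase3}, an expectation-plus-Markov argument (the paper's \cref{lem:consensus_lessthanone}) for Phase \cref{lab:phase4}, and the quadratic recursion followed by a linear tail for Phase \cref{lab:phase5}. The only details to watch are that the iteration of conditional expectations in Phases \cref{lab:phase4} and \cref{lab:phase5} needs the standard $O(t^2/n^4)$ correction for excursions above the threshold, and that the crossover threshold in Phase \cref{lab:phase5} must be taken as $\sqrt{\|\pi\|_2\sqrt{\log n}/K(f)}$ rather than $\|\pi\|_2\sqrt{\log n}$ itself, so that the Hoeffding fluctuation stays well below it; neither change affects the stated time bounds.
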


\begin{proof}[Proof of \cref{thm:mainthm}\cref{thm:cons_initial_bias}]
Since $\|\pi\|_2\geq 1/\sqrt{n}$, we have $|\delta(A_0)|=\Omega(\sqrt{\log n/n})$.
This implies that Phase \cref{lab:phase2} takes at most $O(\log n)$.
Thus, we obtain the claim since we can merge Phases \cref{lab:phase2} to \cref{lab:phase4} by taking appropriate constants $c_2, c_3$ in Phase \cref{lab:phase3}.
\end{proof}
\begin{proof}[Proof of \cref{thm:mainthm}\cref{thm:worstcasefvoting}]
Under the assumption of \cref{thm:mainthm}\cref{thm:worstcasefvoting}, 
for any positive constant $C$, a positive constant $C'$ exists such that $C(\lambda^2+\|\pi\|_2\sqrt{\log n})\leq  C'\frac{\log n}{\sqrt{n}}$.
Thus, we can combine Phase~\cref{lab:phase1} and \cref{thm:mainthm}\cref{thm:cons_initial_bias}, and we obtain the claim.
\end{proof}
\begin{proof}[Proof of \cref{thm:fastconsensus}]
Combining Phases \cref{lab:phase2}, \cref{lab:phase3} and \cref{lab:phase5}, we obtain the claim.
\end{proof}
\subsection[Proof of Lemma 4.1]{Proof of \cref{lem:mainlem}}
\label{sec:ommited_proofs}
For notational convenience, let 
\begin{align*}
&\alpha\defeq \pi(A),\, \alpha'\defeq \pi(A'),\,\alpha_t\defeq \pi(A_t), \\
&\delta\defeq \delta(A)=2\alpha-1,\, \delta'\defeq \delta(A'),\, \delta_t\defeq\delta(A_t).
\end{align*}

\subsection[Phase 1]{Phase \cref{lab:phase1}:~$0\leq |\delta|\leq c_1\log n/\sqrt{n}$}
\label{sec:symmetrybreaking}
We use the following lemma to show \cref{lem:mainlem}\cref{lab:phase1}.
\begin{lemma}[Lemma 4.5 of \cite{CGGNPS18}] \label{lem:nazolemma}
Consider a Markov chain $(X_t)_{t=1}^\infty$ with finite state space $\Omega$ and a function $\Psi:\Omega \to \{0,\ldots,n\}$.
Let $C_3$ be arbitrary constant and $m=C_3 \sqrt{n}\log n$.
Suppose that $\Omega,\Psi$ and $m$ satisfies the following conditions:
\begin{enumerate}[label=$(\roman*)$]
\item\label{state:nazo_sqrt} For any positive constant $h$, there exists a positive constant $C_1<1$ such that
\begin{align*}
\Pr\left[\Psi(X_{t+1})<h\sqrt{n}\,\middle|\,\Psi(X_t)\leq m\right] < C_1.
\end{align*}
\item\label{state:nazo_cher} Three positive constants $\gamma, C_2$ and $h$ exist such that, for any $x\in \Omega$ satisfying $h\sqrt{n}\leq \Psi(x)<m$,
\begin{align*}
\Pr\left[\Psi(X_{t+1})< (1+\gamma)\Psi(X_t) \,\middle|\, X_t=x\right] < \exp\left(-C_2\frac{\Psi(x)^2}{n}\right).
\end{align*}
\end{enumerate}
Then, $\Psi(X_t) \geq m$ holds w.h.p.~for some $t=O(\log n)$.
\end{lemma}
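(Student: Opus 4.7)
The plan is a two-regime analysis splitting the state space at the threshold $\tau\defeq h\sqrt{n}$, where $h$ is the constant from condition \cref{state:nazo_cher}. Condition \cref{state:nazo_sqrt} acts as a ``restart'' mechanism keeping the chain visiting the high regime $\{\Psi\ge\tau\}$, while condition \cref{state:nazo_cher} provides geometric growth up to the target $m$ from any entry into the high regime. First, I will apply condition \cref{state:nazo_sqrt} with this fixed constant $h$: whenever $\Psi(X_t)\le m$, the next step satisfies $\Psi(X_{t+1})\ge\tau$ with probability at least $1-C_1$ independently of the past, so by a standard Chernoff bound for adapted Bernoulli sequences, the chain makes $\Omega(\log n)$ entries into the high regime within the first $C\log n$ rounds with probability $1-n^{-\Omega(1)}$, for a sufficiently large constant $C$.

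Second, I will analyse each entry into the high regime as an ``attempt''. By condition \cref{state:nazo_cher}, starting from an entry with $\Psi_0\ge\tau$, $j$ consecutive growth successes yield $\Psi_j\ge(1+\gamma)^j\tau$, so the failure probability at step $j$ is at most $p_j\defeq\exp(-C_2(1+\gamma)^{2j}h^2)$. Since $\sum_j p_j$ converges by doubly-exponential decay in $j$, the probability that a single attempt survives all $k=O(\log\log n)$ consecutive growth steps needed to reach $m$ is at least $\eta\defeq\prod_{j\ge 0}(1-p_j)$, a strictly positive constant depending only on $h$, $C_2$, $\gamma$. By the strong Markov property, attempts at distinct high-regime entries are independent conditional on occurring, so over $\Omega(\log n)$ attempts the probability that none succeeds is $(1-\eta)^{\Omega(\log n)}=n^{-\Omega(1)}$, yielding $\Psi(X_t)\ge m$ for some $t=O(\log n)$ w.h.p.

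The main obstacle will be reconciling the two time scales in the last step: each attempt takes up to $k=O(\log\log n)$ rounds, so naively $\Omega(\log n)$ disjoint attempts would need $\Omega(\log n\log\log n)$ rounds, more than the budget allows. The resolution is that the attempt-length distribution $\Pr[L=j]\le p_j$ is heavily dominated by short failures (with conditional mean $\E[L\mid \text{fail}]=O(1)$), and attempts starting at consecutive high-regime entries can overlap; a careful renewal analysis then yields $\Omega(\log n)$ effectively-independent attempts within the first $O(\log n)$ rounds, closing the gap.
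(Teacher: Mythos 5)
Your argument is correct in outline but follows a genuinely different route from the one the paper (following \cite{CGGNPS18}) uses. The paper subsamples the chain at the hitting times $\tau(i)$ of the high regime $\{\Psi\geq h\sqrt{n}\}$, obtaining a Markov chain $R_i=X_{\tau(i)}$, and runs a drift argument on the exponential potential $Y_i=\exp(-\Psi(R_i)/\sqrt{n})$: condition \cref{state:nazo_cher} makes $\E[Y_{i+1}\mid R_i]$ shrink by a constant factor per subsampled step, Markov's inequality then gives $\Psi(R_i)\geq m$ w.h.p.\ for $i=O(\log n)$, and only at the very end is condition \cref{state:nazo_sqrt} used to bound the real time $\tau(i)=O(\log n)$ via stochastic domination by i.i.d.\ Bernoullis. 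Your restart decomposition instead treats each visit to the high regime as an attempt succeeding with constant probability $\eta=\prod_{j\geq0}(1-p_j)>0$, and multiplies failure probabilities over $\Omega(\log n)$ attempts. This is more elementary and makes the mechanism transparent, at the cost of the time-accounting step you correctly flag: the fix is not to let attempts ``overlap'' (that would destroy the conditional-independence you need to write $(1-\eta)^{\Omega(\log n)}$) but to define them sequentially and non-overlapping at stopping times, and then observe that a failed attempt has $O(1)$ expected length with doubly-exponential tails and is followed by a geometric($1-C_1$) waiting time, so by a Chernoff bound $\Omega(\log n)$ completed attempts fit inside $O(\log n)$ rounds; one further successful attempt costs only $O(\log\log n)$ extra rounds. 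The paper's potential-function formulation buys a cleaner single inequality and adapts directly to the best-of-$(2k+1)$ variant (\cref{lem:nazolemma_bok}), where the per-step growth factor is $\Theta(\sqrt{k})$ and the target is reached in $O(\log n/\log k)$ subsampled steps; your version would also adapt but would require redoing the renewal bookkeeping there.
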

Let us first prove the following lemma concerning the growth rate of $|\delta|$, which we will use in the proofs of \cref{lab:phase1} and \cref{lab:phase2} of \cref{lem:mainlem}.
\begin{lemma}
\label{lem:delta_lower_half}
Consider a \qmaj \fvoting with respect to $f$ on an $n$-vertex $\lambda$-expander graph with degree distribution $\pi$.
Let 
$\epsilon_h(f)\defeq H_f'(1/2)-1$ and 
$K(f)\defeq \max\{K_2(f),K_2(H_f)\}$ be positive constants depending only on $f$.
Suppose that $\lambda \leq \frac{\epsilon_h(f)}{2K(f)}$.
Then, for any $A\subseteq V$ satisfying $\frac{2K(f)}{\epsilon_h(f)}\lambda^2 \leq |\delta|\leq \frac{\epsilon_h(f)}{K(f)}$,
\begin{align*}
\Pr\left[|\delta'|\leq \left(1+\frac{\epsilon_h(f)}{8}\right)|\delta|\right]
&\leq 2\exp\left(-\frac{\epsilon_h(f)^2\delta^2}{128\|\pi\|_2^2}\right).
\end{align*}
\end{lemma}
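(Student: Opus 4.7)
The plan is to show that $\E[\delta'] \geq (1 + \epsilon_h(f)/4)|\delta|$ under the stated hypotheses, using \cref{lem:expectation} together with a Taylor expansion of $H_f$ around $1/2$, and then to invoke Hoeffding's inequality to rule out large downward fluctuations of $\delta'$. The three thresholds imposed on $\lambda$ and $|\delta|$ are each designed to dominate one of the three error sources that appear in the drift estimate, while the final exponent $\epsilon_h(f)^2\delta^2/(128\|\pi\|_2^2)$ will drop straight out of Hoeffding applied to $\pi(A') = \sum_v \pi(v)X_v$.

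For the drift step I would assume $\delta > 0$ without loss of generality; the case $\delta < 0$ is symmetric by the identity $H_f(1-x) = 1 - H_f(x)$ recorded after \cref{def:quasi-majority}. Writing $\alpha = 1/2 + \delta/2$, Taylor expanding $H_f$ at $1/2$, and using $H_f(1/2) = 1/2$ together with the quasi-majority condition $H_f'(1/2) = 1 + \epsilon_h(f)$ gives
\[
2H_f(\alpha) - 1 = (1+\epsilon_h(f))\delta + R, \qquad |R| \leq \tfrac{K(f)}{4}\delta^2.
\]
Combining with \cref{lem:expectation}, which after using $\alpha(1-\alpha) \leq 1/4$ bounds $|\E[\pi(A')] - H_f(\alpha)|$ by $K(f)\lambda(|\delta|+\lambda)/4$, produces
\[
\bigl|\E[\delta'] - (1+\epsilon_h(f))\delta\bigr| \leq \tfrac{K(f)}{4}\delta^2 + \tfrac{K(f)}{2}\lambda|\delta| + \tfrac{K(f)}{2}\lambda^2.
\]
The three hypotheses $|\delta| \leq \epsilon_h(f)/K(f)$, $\lambda \leq \epsilon_h(f)/(2K(f))$, and $|\delta| \geq 2K(f)\lambda^2/\epsilon_h(f)$ each cap the corresponding term by $\epsilon_h(f)|\delta|/4$, so that summing yields $\E[\delta'] \geq (1 + \epsilon_h(f)/4)\delta$.

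For the concentration step I would note that \cref{eq:Xvdef} expresses $\pi(A') = \sum_v \pi(v)X_v$ as a sum of the independent random variables $\pi(v)X_v \in [0,\pi(v)]$. Hoeffding's inequality then gives $\Pr[|\pi(A') - \E\pi(A')| \geq t] \leq 2\exp(-2t^2/\|\pi\|_2^2)$. Since $\delta' = 2\pi(A') - 1$ and $\E[\delta'] \geq (1+\epsilon_h(f)/4)\delta$, the bad event $|\delta'| \leq (1+\epsilon_h(f)/8)|\delta|$ forces $|\pi(A') - \E\pi(A')| \geq \epsilon_h(f)|\delta|/16$; choosing $t = \epsilon_h(f)|\delta|/16$ in Hoeffding delivers exactly the target tail $2\exp(-\epsilon_h(f)^2\delta^2/(128\|\pi\|_2^2))$.

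The main obstacle is purely bookkeeping: three inhomogeneous error sources (the Taylor remainder of $H_f$, the expander-mixing contribution proportional to $\lambda|\delta|$, and the pure $\lambda^2$ correction from \cref{lem:expectation}) must simultaneously be absorbed into a single slack of order $\epsilon_h(f)|\delta|$, and the Hoeffding radius must be tuned so that the exponent carries the specific constant $1/128$. The three numerical thresholds in the statement are essentially the tightest that make this balance close.
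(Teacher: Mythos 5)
Your proposal is correct and follows essentially the same route as the paper: the same decomposition of the drift error into the Taylor remainder of $H_f$ at $1/2$ plus the two $\lambda$-terms from \cref{lem:expectation}, the same absorption of each of the three error terms into $\epsilon_h(f)|\delta|/4$ via the three hypotheses, and the same Hoeffding step yielding the constant $1/128$. The only cosmetic difference is that you reduce to $\delta>0$ by the opinion-relabeling symmetry and apply plain Hoeffding to $\pi(A')$, whereas the paper keeps absolute values throughout via the reverse triangle inequality and \cref{cor:Hoeffding_abs}; both are valid.
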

\begin{proof}
Combining \cref{lem:expectation} and Taylor's theorem, we have
\begin{align}
\left|\E[\delta']-H_f'\left(\frac{1}{2}\right)\delta\right|
&=2\left|\E[\alpha']-\frac{1}{2}-H_f'\left(\frac{1}{2}\right)\left(\alpha-\frac{1}{2}\right)\right|\nonumber \\
&=2\left|\E\left[\alpha'\right]-H_f\left(\alpha\right)+H_f\left(\alpha \right)-H_f\left(\frac{1}{2}\right)-H_f'\left(\frac{1}{2}\right)\left(\alpha-\frac{1}{2}\right)\right|\nonumber \\
&\leq 2K_2(f)\lambda\left(|\delta|+\lambda\right)\alpha(1-\alpha)+K_2(H_f)\left(\alpha-\frac{1}{2}\right)^2\nonumber \\
&\leq \left(\frac{K(f)}{2}\lambda+\frac{K(f)}{4}|\delta|\right)|\delta|+\frac{K(f)}{2}\lambda^2
\label{eq:delta_approx_halfeq}
\end{align}
Note that $H_f(1/2)=1/2$ from the definition.
From assumptions of $\lambda\leq \frac{\epsilon_h(f)}{2K(f)}$, $|\delta|\leq \frac{\epsilon_h(f)}{K(f)}$ and $\lambda^2\leq \frac{\epsilon_h(f)}{2K(f)}|\delta|$, we have
\begin{align*}
\left|H_f'\left(\frac{1}{2}\right)\delta\right|-\left|\E[\delta']\right|
\leq \left|H_f'\left(\frac{1}{2}\right)\delta-\E[\delta']\right|
\leq \frac{3}{4}\epsilon_h(f)|\delta|.
\end{align*}
Hence, it holds that
\begin{align*}
\bigl|\E[\delta']\bigr|
&\geq \left|H_f'\left(\frac{1}{2}\right)\delta\right|-\frac{3}{4}\epsilon_h(f)|\delta|
= (1+\epsilon_h(f))|\delta|-\frac{3}{4}\epsilon_h(f)|\delta|
= \left(1+ \frac{\epsilon_h(f)}{4}  \right)|\delta|.
\label{eq:delta_Hoeffding}
\end{align*}
We observe that, for any $\kappa>0$, 
\begin{align}
\Pr\left[|\delta'|\leq \bigl|\E[\delta']\bigr| -\kappa\right]\leq 2\exp\left(-\frac{\kappa^2}{2\|\pi\|_2^2}\right)
\end{align}
from \cref{cor:Hoeffding_abs}.
Note that $\delta'=\sum_{v\in V}\pi(v)(2X_v-1)$ for independent indicator random variables $(X_v)_{v\in V}$ (see~\cref{eq:Xvdef} for the definition of $X_v$).
Thus,
\begin{align*}
\Pr\left[|\delta'|\leq \left(1+\frac{\epsilon_h(f)}{8}\right)|\delta|\right]
&=\Pr\left[|\delta'|\leq \left(1+\frac{\epsilon_h(f)}{4}\right)|\delta|-\frac{\epsilon_h(f)}{8}|\delta|\right]\\
&\leq \Pr\left[|\delta'|\leq \bigl|\E[\delta']\bigr|-\frac{\epsilon_h(f)}{8}|\delta|\right]
\leq 2\exp\left(-\frac{\epsilon_h(f)^2\delta^2}{128\|\pi\|_2^2}\right)
\end{align*}
and we obtain the claim. 
\end{proof}

\begin{proof}[Proof of \cref{lem:mainlem}\cref{lab:phase1}]
We check the conditions \cref{state:nazo_sqrt} and \cref{state:nazo_cher} of \cref{lem:nazolemma} with letting $\Psi(A)= \lfloor n |\delta(A)|\rfloor$ and $m=c_1\sqrt{n}\log n$.

\paragraph*{Condition \cref{state:nazo_sqrt}.}
First, we show the following claim 
that evaluates $\Var[\delta']$.

\begin{claim}
\label{cor:variancebound}
Under the same assumption as \cref{lem:mainlem}\cref{lab:phase1}, 
\begin{align*}
    \frac{\epsilon_{{\rm var}}(f)}{n} \leq \Var[\delta']\leq \frac{5C_1^2}{n}
\end{align*}
where $\epsilon_{{\rm var}}(f)\defeq f(1/2)(1-f(1/2))$ is a positive constant depending only on $f$.
\end{claim}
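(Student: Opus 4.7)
The plan is to invoke \cref{lem:variance} and show that the two error terms on its right-hand side are of order $o(1/n)$ under the assumptions of \cref{lem:mainlem}\cref{lab:phase1}, so that $\Var[\pi(A')]$ is dominated by its leading term $\|\pi\|_2^2 g(1/2)$; then $\Var[\delta']=4\Var[\pi(A')]$ because $\delta'=2\pi(A')-1$, and both bounds follow from pinching $\|\pi\|_2^2$ between $1/n$ and $C_1^2/n$.

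First I would write down the identity $\Var[\delta']=4\Var[\pi(A')]$ and apply \cref{lem:variance}, which gives
\begin{align*}
\Bigl|\Var[\pi(A')]-\|\pi\|_2^2 g(1/2)\Bigr|\leq K_1(g)\left(\tfrac{1}{2}\|\pi\|_2^2|\delta|+2\|\pi\|_3^{3/2}\lambda\sqrt{\alpha(1-\alpha)}\right).
\end{align*}
I would then bound each error term separately. For the first term, since we are in Phase~\cref{lab:phase1} we have $|\delta|\leq c_1\log n/\sqrt{n}=o(1)$ and $\|\pi\|_2^2\leq C_1^2/n$, so the contribution is $o(1/n)$. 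For the second, using $\alpha(1-\alpha)\leq 1/4$, $\lambda\leq C_1 n^{-1/4}$, and $\|\pi\|_3^{3/2}\leq (\varepsilon/\sqrt{n})^{3/2}=\varepsilon^{3/2}/n^{3/4}$, the product is at most $C_1\varepsilon^{3/2}/n=o(1/n)$ because $\varepsilon(n)\to 0$. Thus both error terms are $o(1/n)$.

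For the main term I would recall that by Cauchy--Schwarz $\|\pi\|_2\geq 1/\sqrt{n}$, while by hypothesis $\|\pi\|_2\leq C_1/\sqrt{n}$, so
\begin{align*}
\frac{g(1/2)}{n}\leq \|\pi\|_2^2 g(1/2)\leq \frac{C_1^2 g(1/2)}{n}\leq \frac{C_1^2}{4n},
\end{align*}
where in the last step I use $g(1/2)=f(1/2)(1-f(1/2))\leq 1/4$. Note that $g(1/2)>0$ because $0<f(1/2)<1$ by the \qmaj property~\cref{cond:f1/2}, so $\epsilon_{\rm var}(f)=g(1/2)$ is indeed a positive constant depending only on $f$.

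Finally, combining the above, for sufficiently large $n$ the $o(1/n)$ error is at most $\frac{3}{4}\cdot\frac{g(1/2)}{n}$ on the lower side and at most $\frac{C_1^2}{n}$ on the upper side, giving
\begin{align*}
\Var[\delta']=4\Var[\pi(A')]\geq 4\cdot\frac{g(1/2)}{n}-3\cdot\frac{g(1/2)}{n}=\frac{\epsilon_{\rm var}(f)}{n}
\end{align*}
and $\Var[\delta']\leq C_1^2/n+4C_1^2/n=5C_1^2/n$. There is no real obstacle here; the only point requiring care is checking that the $\|\pi\|_3^{3/2}\lambda$ term is genuinely $o(1/n)$, which is why the hypothesis $\|\pi\|_3\leq\varepsilon/\sqrt{n}$ with $\varepsilon\to 0$ is needed rather than just $\|\pi\|_3=O(1/\sqrt{n})$.
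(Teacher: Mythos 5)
Your proposal is correct and follows essentially the same route as the paper: apply \cref{lem:variance}, use the Phase~\cref{lab:phase1} hypotheses ($|\delta|=O(\log n/\sqrt n)$, $\lambda\leq C_1n^{-1/4}$, $\|\pi\|_3\leq\varepsilon/\sqrt n$) to show both error terms are $o(1/n)$, and then pinch the main term via $1/n\leq\|\pi\|_2^2\leq C_1^2/n$ together with $0<g(1/2)\leq 1/4$. The only cosmetic difference is that you make explicit the use of quasi-majority condition~\cref{cond:f1/2} for $g(1/2)>0$, which the paper leaves implicit.
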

\begin{proof}[Proof of the claim]
From \cref{lem:variance} and assumptions, we have
\begin{align*}
\left|\frac{\Var[\delta']}{4}-\|\pi\|_2^2g\left(\frac{1}{2}\right)\right|
&=\left|\Var[\alpha']-\|\pi\|_2^2g\left(\frac{1}{2}\right)\right|
\leq K_1(g)\left(\|\pi\|_2^2\frac{|\delta|}{2}+\|\pi\|_3^{3/2}\lambda\right)\\
&\leq \frac{K_1(g)}{n}\left(C_1^2c_1\frac{\log n}{\sqrt{n}}+C_1\epsilon^{3/2}\right)
=\frac{1}{n}\cdotp o(1).
\end{align*}
Note that $\Var[\delta']=\Var[2\alpha'-1]=4\Var[\alpha']$.
Since $\|\pi\|_2^2\geq 1/n$, we have
\begin{align*}
\frac{\epsilon_{{\rm var}}(f)}{n}\leq \frac{4\epsilon_{{\rm var}}(f)-o(1)}{n}\leq \Var[\delta']\leq \frac{4C_1^2+o(1)}{n}\leq \frac{5C_1^2}{n}.
\end{align*}
\end{proof}

From \cref{cor:BEbound_cor2} with letting $Y_v=\pi(v)(2X_v-1)$, we have
\begin{align}
\Pr\left[\left|\delta'\right|\leq x\sqrt{\frac{\epsilon_{{\rm var}}(f)}{n}}\right]
&\leq \Pr\left[\left|\delta'\right|\leq x\sqrt{\Var[\delta']}\right] \leq \Phi(x)+\frac{5.6\|\pi\|_3^3}{\Var[\delta']^{3/2}}\nonumber \\
&\leq \Phi(x)+5.6\frac{\epsilon^3}{n^{3/2}}\cdotp \frac{n^{3/2}}{\epsilon_{{\rm var}}(f)^{3/2}}
= \Phi(x)+o(1)
\label{eq:biasprob}
\end{align}
for any $x\in \mathbb{R}$, where $\Phi(x)=\frac{1}{\sqrt{2\pi}}\int_{-\infty}^x\mathrm{e}^{-y^2/2}\mathrm{d}y$.
Thus, for any constant $h>0$, there exists some constant $C>0$ such that
\begin{align*}
\Pr[\Psi(A')<h\sqrt{n}\mid \Psi(A)\leq m] < C, 
\end{align*}
which verifies the condition \cref{state:nazo_sqrt}.
\paragraph*{Condition \cref{state:nazo_cher}.}
Set $h=\frac{2K(f)}{\epsilon_h(f)} C_1^2$ and assume $h\sqrt{n}\leq \Psi(A)<m$. Then
\begin{align*}
\frac{2K(f)}{\epsilon_h(f)} \lambda^2 n \leq \frac{2K(f)}{\epsilon_h(f)} C_1^2\sqrt{n} = h\sqrt{n} \leq \Psi(A) \leq |\delta|n=o(n).
\end{align*}
Thus, we can apply \cref{lem:delta_lower_half}
and positive constants $\gamma, C$ exist
such that, for any $h\sqrt{n} \leq \Psi(A)\leq c_1\sqrt{n}\log n$,
\begin{align*}
\Pr[\Psi(A')<(1+\gamma)\Psi(A)]
&< \exp\left(-C\frac{\Psi(A)^2}{n}\right).
\end{align*}
Note that $\|\pi\|_2^2=\Theta(1/n)$ from the assumption.
This verifies the condition \cref{state:nazo_cher}.

Thus, we can apply \cref{lem:nazolemma} and we obtain the claim.
\end{proof}

\subsection[Phase 2]{Phase \cref{lab:phase2}: $\frac{2\max\{K(f),8\}}{\epsilon_h(f)}\max\{\lambda^2,\|\pi\|_2\sqrt{\log n}\}\leq |\delta|\leq \frac{\epsilon_h(f)}{K(f)}$}
\begin{proof}[Proof of \cref{lem:mainlem}\cref{lab:phase2}]
Since $|\delta|\geq \frac{16}{\epsilon_h(f)}\|\pi\|_2\sqrt{\log n}$ from assumptions, applying \cref{lem:delta_lower_half} yields
\begin{align*}
\Pr\left[|\delta'|\leq \left(1+\frac{\epsilon_h(f)}{8}\right)|\delta|\right]\leq \frac{2}{n^2}.
\end{align*}
Thus, it holds with probability larger than $(1-2/n^2)^t$ that
$
|\delta_t|\geq \left(1+\frac{\epsilon_h(f)}{8}\right)^t|\delta_0|
$
and we obtain the claim by substituting $t=O(\log |\delta_0|^{-1})$.
\end{proof}
\subsection[Phase 3]{Phase \cref{lab:phase3}: $0<c_2\leq \alpha \leq c_3<1/2$}
\begin{proof}[Proof of \cref{lem:mainlem}\cref{lab:phase3}]
We first observe that, for any $\kappa>0$, 
\begin{align}
    \Pr\left[\left|\alpha'-\E[\alpha']\right|\geq \kappa\|\pi\|_2\sqrt{\log n}\right]&\leq 2n^{-2\kappa}
\label{eq:hoeffding_alpha}
\end{align}
from \cref{lem:Hoeffding}.
Note that $\alpha'=\sum_{v\in V}\pi(v)X_v$ for independent indicator random variables $(X_v)_{v\in V}$.
Hence, applying \cref{lem:expectation} yields
\begin{align}
\left|\alpha'-H_f(\alpha)\right|
\leq \left|\alpha'-\E[\alpha']\right|+\left|\E[\alpha']-H_f(\alpha)\right|
\leq \|\pi\|_2\sqrt{\log n}+\frac{K_2(f)}{4}(|\delta|+\lambda)\lambda
\label{eq:pia'bound}
\end{align}
with probability larger than $1-2/n^2$.
Then, for any $\alpha\in [c_2, c_3]$, it holds with probability larger than $1-2/n^2$ that 
\begin{align*}
\alpha'
&\leq H_f(\alpha)+\frac{K(f)}{2}\lambda+\|\pi\|_2\sqrt{\log n}
\leq \alpha-\epsilon(f)+\frac{\epsilon(f)}{4}+\frac{\epsilon(f)}{4}
\leq \alpha-\frac{\epsilon(f)}{2}.
\end{align*}
Thus, for $\alpha_0\in [c_2, c_3]$, $\alpha_t\leq c_2$ within $t=2(c_3-c_2)/\epsilon(f)=O(1)$ steps w.h.p.
\end{proof}

\subsection[Phase 4]{Phase \cref{lab:phase4}: $0\leq \alpha \leq \frac{\epsilon_c(f)}{8K(f)}$}
We show the following lemma which is useful for proving \cref{lab:phase4} and \cref{lab:phase5} of \cref{lem:mainlem}.
\begin{lemma}
\label{lem:consensus_lessthanone}
Let $\epsilon\in (0,1]$ be an arbitrary constant.
Consider \fvoting on an $n$-vertex connected graph with degree distribution $\pi$.
Suppose that, for some $\alpha_*\in [0,1]$ and $K\in [0, 1-\epsilon]$, 
$$\E[\alpha']\leq K\alpha$$
for any $A\subseteq V$ satisfying $\alpha\leq \alpha_*$ and $\|\pi\|_2\leq \frac{\epsilon\alpha_*}{2\sqrt{\log n}}$.
Then, for any $A_0\subseteq V$ satisfying $\alpha_0\leq \alpha_*$, $\alpha_t=0$ w.h.p.~within $O\left(\frac{\log n}{\log K^{-1}}\right)$ steps.
\end{lemma}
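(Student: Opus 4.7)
The plan is to combine the one-step Hoeffding bound \cref{eq:hoeffding_alpha} with an iterated Markov inequality applied to an indicator-modulated version of $\alpha_t$. Fix $T = \lceil C \log n / \log K^{-1}\rceil$ for a sufficiently large constant $C>0$; the aim is to show $\Pr[\alpha_T > 0] = O(n^{-c})$.

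First I would establish a one-step regression that keeps the chain inside the regime where the contraction hypothesis is valid. Applying \cref{eq:hoeffding_alpha} with $\kappa=1$ to $\alpha_{t+1}=\sum_{v\in V}\pi(v)X_v$ gives $\alpha_{t+1} \leq \E[\alpha_{t+1}\mid A_t] + \|\pi\|_2\sqrt{\log n}$ with probability at least $1-2/n^2$. When $\alpha_t\leq \alpha_*$, the hypothesis $\E[\alpha_{t+1}\mid A_t]\leq K\alpha_t$ together with $K\leq 1-\epsilon$ and $\|\pi\|_2\sqrt{\log n}\leq \epsilon\alpha_*/2$ yields
\[
\alpha_{t+1} \leq K\alpha_t + \tfrac{\epsilon}{2}\alpha_* \leq (1-\tfrac{\epsilon}{2})\alpha_* < \alpha_*.
\]
Union bounding over $t\leq T$, the event $F_T=\{\alpha_s\leq \alpha_*\text{ for all }s\leq T\}$ holds with probability at least $1-2T/n^2 = 1-O(\log n/n^2)$.

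Next I would iterate the expectation bound while remaining inside $F_T$. Let $\mathcal{F}_t$ denote the natural filtration of $(A_s)_{s\leq t}$. Since $\mathbbm{1}_{F_t}$ is $\mathcal{F}_t$-measurable, $\mathbbm{1}_{F_{t+1}}\leq \mathbbm{1}_{F_t}$, and the hypothesis applies on $\{\mathbbm{1}_{F_t}=1\}$,
\[
\E\bigl[\alpha_{t+1}\mathbbm{1}_{F_{t+1}}\bigm|\mathcal{F}_t\bigr] \leq \mathbbm{1}_{F_t}\,\E[\alpha_{t+1}\mid \mathcal{F}_t] \leq K\alpha_t\mathbbm{1}_{F_t},
\]
so iterating over $t=0,\dots,T-1$ gives $\E[\alpha_T\mathbbm{1}_{F_T}]\leq K^T\alpha_0\leq K^T$. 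Since $\pi(v)\geq 1/(2|E|)\geq 1/n^2$ for every vertex of a connected graph, $A_T\neq\emptyset$ forces $\alpha_T\geq 1/n^2$, so Markov's inequality gives
\[
\Pr[\alpha_T > 0,\, F_T] \leq \Pr[\alpha_T\mathbbm{1}_{F_T}\geq 1/n^2] \leq n^2\E[\alpha_T\mathbbm{1}_{F_T}]\leq n^2 K^T,
\]
which is $O(n^{-c})$ for any desired $c$ once $C$ is taken large enough. Together with the bound on $\Pr[F_T^c]$ from the first step, this gives $\Pr[\alpha_T>0]=O(n^{-c})$.

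I expect the main technical subtlety to be the interplay between the conditional contraction (only valid while $\alpha\leq \alpha_*$) and the iteration: a naive inequality $\E[\alpha_{t+1}]\leq K\E[\alpha_t]$ is not licensed by the hypothesis, and any stopped-process argument must intertwine the contraction with a high-probability control that the chain does not leave the regime. The indicator trick handles this cleanly, and the assumption $\|\pi\|_2\sqrt{\log n}\leq \epsilon\alpha_*/2$ is precisely what forces the deterministic one-step bound to sit strictly below $\alpha_*$, making $F_T$ overwhelmingly likely throughout the horizon of length $T=O(\log n/\log K^{-1})$.
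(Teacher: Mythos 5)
Your proposal is correct and follows essentially the same route as the paper's proof: a one-step Hoeffding bound shows the process stays in the regime $\alpha\leq\alpha_*$ w.h.p., the expectation contraction is then iterated to get $\E[\alpha_T]\lesssim K^T$ (the paper absorbs the escape event into an additive $2t/n^{4}$ error term rather than using your indicator-stopped process, but this is only a bookkeeping difference), and Markov's inequality together with $\pi_{\min}\geq n^{-2}$ finishes the argument.
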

\begin{proof}
For any $\alpha\leq \alpha_*$, from \cref{eq:hoeffding_alpha} and assumptions of $\E[\alpha']\leq \alpha$ and $\|\pi\|_2\leq \frac{\epsilon\alpha_*}{2\sqrt{\log n}}$, it holds with probability larger than $1-2/n^4$ that
\begin{align*}
\alpha'
\leq \E[\alpha']+2\|\pi\|_2\sqrt{\log n}
\leq K\alpha+\epsilon \alpha_*
\leq (1-\epsilon)\alpha_*+\epsilon \alpha_*
= \alpha_*.
\end{align*}
Thus, for any $\alpha_0\leq \alpha_*$, we have
\begin{align*}
\E[\alpha_t]
&=\sum_{x\leq a_*}\E\left[\alpha_t|\alpha_{t-1}=x\right]\Pr\left[\alpha_{t-1}=x\right]+\sum_{x> a_*}\E\left[\alpha_t|\alpha_{t-1}=x\right]\Pr\left[\alpha_{t-1}=x\right]\\
&\leq \sum_{x\leq a_*}K x\Pr\left[\alpha_{t-1}=x\right]+\Pr\left[\alpha_{t-1}>a_*\right]
\leq K \E[\alpha_{t-1}]+\frac{2t}{n^{4}}\\
&\leq \cdots \leq K^t\alpha_0+\frac{2t^2}{n^4}
\leq K^t+\frac{2t^2}{n^4}.
\end{align*}
This implies that, 
$
\E[\alpha_t]
=O(n^{-3})
$ within $t=O\left(\frac{\log n}{\log K^{-1}}\right)$ steps.
Let $\pi_{\min}\defeq \min_{v\in V}\pi(v)\geq 1/(2|E|)\geq 1/n^2$.
Markov inequality yields
\begin{align*}
\Pr[\alpha_t=0]=1-\Pr[\alpha_t\geq \pi_{\min}]\geq 1-\frac{\E[\alpha_t]}{\pi_{\min}}=1-O(1/n)
\end{align*}
and we obtain the claim. 
\end{proof}

\begin{proof}[Proof of \cref{lem:mainlem} of \cref{lab:phase4}]
Combining \cref{lem:expectation} and Taylor's theorem, 
\begin{align}
\bigl|\E[\alpha']-H_f'(0)\alpha\bigr|
&=\bigl|\E[\alpha']-H_f(\alpha)+H_f(\alpha)-H_f(0)-H_f'(0)(\alpha-0)\bigr|\nonumber \\
&\leq K_2(f)\lambda\left(|\delta|+\lambda\right)\alpha(1-\alpha)+\frac{K_2(H_f)}{2}\alpha^2 \nonumber \\
&\leq 2K(f)\lambda\alpha+\frac{K(f)}{2}\alpha^2. \label{eq:alphaclosetozero}
\end{align}
Hence, for any $\alpha\leq \frac{\epsilon_c(f)}{8K(f)}$, we have
\begin{align*}
\E[\alpha']
&\leq \left(H_f'(0)+ 2K(f)\lambda+\frac{K(f)}{2}\alpha\right)\alpha \\
&\leq \left(1-\epsilon_c(f)+\frac{\epsilon_c(f)}{4}+\frac{\epsilon_c(f)}{4}\right)\alpha
=\left(1-\frac{\epsilon_c(f)}{2}\right)\alpha. 
\end{align*}
Letting $\epsilon=\epsilon_c(f)/2$, $K=1-\epsilon_c(f)/2$ and $\alpha_*=\frac{\epsilon_c(f)}{8K(f)}$,
from the assumption, $\|\pi\|_2\leq \frac{\epsilon_c(f)^2}{32K(f)\sqrt{\log n}}=\frac{\epsilon \alpha_*}{2\sqrt{\log n}}$.
Thus, we can apply \cref{lem:consensus_lessthanone} and we obtain the claim.
\end{proof}

\subsection[Phase 5]{Phase \cref{lab:phase5}: $H_f'(0)=0$ and $0\leq \alpha \leq \frac{1}{7K(f)}$}
\begin{proof}[Proof of \cref{lem:mainlem}\cref{lab:phase5}]
In this case, from \cref{eq:alphaclosetozero},
\begin{align}
    \E[\alpha']\leq 2K(f)\lambda \alpha + \frac{K(f)}{2}\alpha^2.
    \label{eq:alphahfzero}
\end{align}
We consider the following two cases.

\noindent \textbf{Case 1. $\max\left\{\lambda, \sqrt{\frac{\|\pi\|_2\sqrt{\log n}}{K(f)}}\right\}\leq \alpha\leq \frac{1}{7K(f)}$:}
In this case, combining \cref{eq:hoeffding_alpha,eq:alphahfzero}, it holds with probability larger than $1-2/n^2$ that
\begin{align*}
    \alpha'
    &\leq 
    \left(\frac{2K(f)\lambda}{\alpha}+\frac{K(f)}{2}+\frac{\|\pi\|_2\sqrt{\log n}}{\alpha^2}\right)\alpha^2
    \leq \frac{7K(f)}{2}\alpha^2.
\end{align*}
Applying this inequality iteratively, for any $\alpha_0\leq 7K(f)^{-1}$, 
\begin{align*}
    \alpha_t
    \leq \frac{7K(f)}{2}\alpha_{t-1}^2
    \leq \cdots \leq \frac{2}{7K(f)}\left(\frac{7K(f)}{2}\alpha_0\right)^{2^t}
    \leq \frac{2}{7K(f)2^{2^t}}.
\end{align*}
holds with probability larger than $(1-2/n^2)^t$.
This implies that, within $t=O(\log \log n)$ steps, $\alpha_t\leq \max\left\{\lambda, \sqrt{\frac{\|\pi\|_2\sqrt{\log n}}{K(f)}}\right\}$ w.h.p. Note that $\max\left\{\lambda, \sqrt{\frac{\|\pi\|_2\sqrt{\log n}}{K(f)}}\right\}\geq \sqrt{\frac{\|\pi\|_2\sqrt{\log n}}{K(f)}}\geq \sqrt{\frac{\sqrt{\log n/n}}{K(f)}}$ since $\|\pi\|_2^2\geq 1/n$.

\noindent \textbf{Case 2. $\alpha\leq \max\left\{\lambda, \sqrt{\frac{\|\pi\|_2\sqrt{\log n}}{K(f)}}\right\}$:}
Set $\alpha_*=\max\left\{\lambda, \sqrt{\frac{\|\pi\|_2\sqrt{\log n}}{K(f)}}\right\}\geq \sqrt{\frac{\|\pi\|_2\sqrt{\log n}}{K(f)}}$, $K=\frac{5K(f)}{2}\lambda + \frac{1}{2}\sqrt{K(f)\|\pi\|_2\sqrt{\log n}}$ and $\epsilon=1/4$. 
Then, from $\lambda\leq \frac{1}{10K(f)}$ and $\|\pi\|_2\leq \frac{1}{64K(f)\sqrt{\log n}}$, 
\begin{align*}
\|\pi\|_2
&=(\sqrt{\|\pi\|_2})^2\leq \frac{\sqrt{\|\pi\|_2}}{8\sqrt{K(f)\sqrt{\log n}}}
= \sqrt{\frac{\|\pi\|_2\sqrt{\log n}}{K(f)}}\frac{\epsilon}{2\sqrt{\log n}}
\leq \frac{\epsilon\alpha_*}{2\sqrt{\log n}}, \\
    K&\leq \frac{1}{2}+\frac{1}{16}\leq 1-\epsilon, \\
    \E[\alpha']
    &\leq \left(2K(f)\lambda  + \frac{K(f)}{2}\alpha\right)\alpha
    \leq \left(2K(f)\lambda + \frac{K(f)}{2}\lambda + \frac{1}{2}\sqrt{K(f)\|\pi\|_2\sqrt{\log n}}\right)\alpha
    =K\alpha.
\end{align*}
$
    K\leq 1/4+1/2=3/4
$.
Thus, applying \cref{lem:consensus_lessthanone}, we obtain the claim.
\end{proof}

\section[proof of Theorem 1.4]{Proof of \cref{thm:lowerboundconsensustime}}
\label{app:lower}
This section is devoted to prove \cref{thm:lowerboundconsensustime}.
In particular, we show the following theorem.

\begin{theorem}
\label{thm:newlower}
Let $C>0$ be an arbitrary constant.
Consider a \qmaj \fvoting with respect to $f$ on an $n$-vertex $\lambda$-expander graph with degree distribution $\pi$.
Suppose that $\max\{\lambda,\|\pi\|_2\}\leq n^{-C}$.
Then, for any $A\subseteq V$ satisfying $|\delta(A)|\leq n^{-C}$, $T_{\cons}(A)=\Omega(\log n)$ w.h.p.
\end{theorem}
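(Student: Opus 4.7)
The plan is to prove that the bias $|\delta_t|$ grows by at most a constant multiplicative factor per step, plus a negligible additive noise. Since $|\delta_0|\leq n^{-C}$ and a consensus configuration requires $|\delta_t|=1$, this geometric rate forces $\Omega(\log n)$ rounds before consensus.

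\textbf{Step 1 (expected growth).} From \cref{lem:expectation} we have $|\E[\delta']-(2H_f(\alpha)-1)|\leq K_2(f)\lambda(|\delta|+\lambda)\alpha(1-\alpha)\leq (K_2(f)/4)\lambda(|\delta|+\lambda)$. A second-order Taylor expansion of $H_f$ at $1/2$, using $H_f(1/2)=1/2$ (which follows directly from the defining formula $H_f(x)=x(1-f(1-x))+(1-x)f(x)$), gives $|2H_f(\alpha)-1-H_f'(1/2)\delta|\leq (K_2(H_f)/4)\delta^2$. Combining:
\[
\bigl|\E[\delta']\bigr| \leq H_f'(1/2)\,|\delta| + C_f\bigl(\delta^2+\lambda|\delta|+\lambda^2\bigr)
\]
for a constant $C_f$ depending only on $f$. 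Note $H_f'(1/2)=1+\epsilon_h(f)$ is a fixed constant.

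\textbf{Step 2 (concentration).} Since $\delta'=\sum_{v\in V}\pi(v)(2X_v-1)$ is a $\pi$-weighted sum of independent $\pm 1$ variables, a Hoeffding-type bound (as used in Phase~(II) via \cref{cor:Hoeffding_abs}) yields $|\delta'-\E[\delta']|\leq 2\|\pi\|_2\sqrt{\log n}$ with probability at least $1-2n^{-2}$.

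\textbf{Step 3 (iterate).} Fix any constant $\rho>H_f'(1/2)$. Assuming inductively $|\delta_t|\leq 1/2$, the quadratic term $C_f\delta_t^2$ is dominated by $(\rho-H_f'(1/2))|\delta_t|/2$, and the mixed term $C_f\lambda|\delta_t|$ is likewise negligible since $\lambda\leq n^{-C}$. Steps 1 and 2 therefore combine to give
\[
|\delta_{t+1}|\leq \rho|\delta_t|+\eta,\qquad \eta = O\bigl(\lambda^2+\|\pi\|_2\sqrt{\log n}\bigr) = O\bigl(n^{-C}\sqrt{\log n}\bigr),
\]
with probability $\geq 1-2n^{-2}$. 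Unrolling yields
\[
|\delta_t|\leq \rho^t\bigl(|\delta_0|+\eta/(\rho-1)\bigr) = O\bigl(\rho^t\, n^{-C}\sqrt{\log n}\bigr).
\]
Picking $t=c\log n$ with $c<C/(2\log\rho)$ forces $\rho^t\leq n^{C/2}$, so $|\delta_t|=O(n^{-C/2}\sqrt{\log n})\ll 1/2$ for large $n$, closing the induction.

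\textbf{Step 4 (conclusion).} A union bound over $t\leq c\log n$ shows the recurrence holds at every step with probability $1-O(\log n / n^2)=1-o(1)$. On that event $|\delta_t|<1$ for all $t\leq c\log n$, so $A_t\notin\{\emptyset,V\}$ and $T_\cons(A)\geq c\log n=\Omega(\log n)$ w.h.p.

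\textbf{Main obstacle.} The delicate point is closing the inductive hypothesis $|\delta_t|\leq 1/2$: the quadratic Taylor remainder $\delta_t^2$ could a priori destabilize the linear recurrence. This is handled by the stronger (self-reinforcing) bound that $|\delta_t|$ stays polynomially small throughout the window $t\leq c\log n$, making the quadratic correction asymptotically negligible relative to the linear one. A minor care point is that the constant $c$ depends on $C$ (via $\log\rho$), but $\rho$—and hence $c$—is fixed once $f$ is fixed.
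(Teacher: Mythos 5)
Your proposal is correct and follows essentially the same route as the paper's proof: bound $|\E[\delta']|$ by $H_f'(1/2)|\delta|$ plus Taylor/expander error terms via \cref{lem:expectation}, add Hoeffding noise of order $\|\pi\|_2\sqrt{\log n}$, and unroll the resulting linear recurrence for $\Theta(\log n)$ steps to conclude $|\delta_t|=o(1)<1$. The only (immaterial) difference is that the paper sidesteps your inductive control of the quadratic remainder by absorbing it into a cruder universal constant $c=1+\epsilon_h(f)+\tfrac{3}{4}K(f)$ valid for all $|\delta|\leq 1$.
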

\begin{proof}[Proof of \cref{thm:newlower}]
From \cref{eq:delta_approx_halfeq}, 
\begin{align*}
\left|\E[\delta']\right|
&\leq H_f'\left(\frac{1}{2}\right)\left|\delta\right|+\left(\frac{K(f)}{2}\lambda+\frac{K(f)}{4}|\delta|\right)|\delta|+\frac{K(f)}{2}\lambda^2\\
&\leq \left(1+\epsilon_h(f)+\frac{3K(f)}{4}\right)|\delta|+K(f)\lambda^2.
\end{align*}
Recall that $\delta'=\sum_{v\in V}(2\pi_v-1)$ for independent indicator random variables $(X_v)_{v\in V}$ \cref{eq:Xvdef}.
Thus, for any $\kappa>0$,
\begin{align*}
    \Pr\left[\left|\delta'\right|\geq \left|\E[\delta']\right|+\kappa\right]\leq \exp\left(-\frac{\kappa^2}{2\|\pi\|_2^2}\right)
\end{align*}
from \cref{cor:Hoeffding_abs}.
Hence, it holds with probability larger than $1-2/n^2$ that
\begin{align*}
|\delta'|\leq c|\delta|+K(f)\lambda^2 + 2\|\pi\|_2\sqrt{\log n},
\end{align*}
where we put $c\defeq 1+\epsilon_h(f)+\frac{3K(f)}{4}> 1$.
Then, applying this inequality iteratively with $t=(C/2) \log_c n$ steps, 
\begin{align*}
|\delta_t|
&\leq  c|\delta_{t-1}|+K(f)\lambda^2 + 2\|\pi\|_2\sqrt{\log n}\\
&\leq \cdots \leq c^t |\delta_0| + tc^t\left(K(f)\lambda^2 + 2\|\pi\|_2\sqrt{\log n}\right)\\
&\leq \frac{ n^{C/2}}{n^C}+ n^{C/2}\log_c n^{C/2}\left(\frac{K(f)}{n^{2C}}+\frac{2\sqrt{\log n}}{n^{C}}\right)=o(1)
\end{align*}
w.h.p., and we obtain the claim. Note that we use our assumptions of $|\delta_0|, \max{\lambda, \|\pi\|_2}\leq n^{-C}$ in the last inequality.
\end{proof}

\section[Proof of Theorem 1.7]{Proof of \cref{thm:bok}} \label{sec:bokproof}
We show \cref{thm:bok}.
The proof is almost same as the one given in \cref{sec:dynamicsofpia} but we need some special care.
We assume $k=\omega(1)$ and thus $k$ is sufficiently large.
Consider best-of-$(2k+1)$ on an $n$-vertex $\lambda$-expander graph with degree distribution $\pi$.
Suppose that the graph satisfies the conditions of \cref{thm:bok}.
Let $A_0,A_1,\ldots,$ be the sequence given by the best-of-$(2k+1)$ with initial configuration $A_0\subseteq V$.
For notational convenience, let 
\begin{align*}
&\alpha\defeq \pi(A),\, \alpha'\defeq \pi(A'),\,\alpha_t\defeq \pi(A_t), \\
&\delta\defeq \delta(A)=2\alpha-1,\, \delta'\defeq \delta(A'),\, \delta_t\defeq\delta(A_t).
\end{align*}

The dynamics of best-of-$(2k+1)$ are divided into four phases.
More specifically, we prove the following key result that corresponds to \cref{lem:mainlem}.

\begin{lemma}
\label{lem:mainlem_bok}
Consider best-of-$(2k+1)$ on an $n$-vertex $\lambda$-expander graph with degree distribution $\pi$.
Suppose that the graph satisfies the conditions of \cref{thm:bok}.
Then, the following holds:
\begin{enumerate}[label=(\Roman*)]
    \item \label{lab:phase1_bok}
    For any $A_0\subseteq V$ satisfying $|\delta_0|\leq 300C \log n/\sqrt{n}$ , $|\delta_t|\geq 300C\log n/\sqrt{n}$ within $t=O(\log n/\log k)$ steps w.h.p.
    \item \label{lab:phase2_bok}
    For any $A_0\subseteq V$ satisfying $|\delta_0|$ satisfying $300C\log n/\sqrt{n} \leq |\delta_0|\leq \frac{1.25}{\sqrt{k}}$, $|\delta_t|>\frac{1.25}{\sqrt{k}}$ within $t=O(\log n/\log k)$ steps w.h.p.
    \item \label{lab:phase3_bok}
    For any $A_0\subseteq V$ satisfying $\frac{1.25}{\sqrt{k}} \leq |\delta_0|\leq 0.9$, $|\delta_1|> 0.9$ w.h.p.
    \item \label{lab:phase4_bok}
    For any $A_0\subseteq V$ satisfying $0.9\leq |\delta_0|<1$, $|\delta_t|=1$ (or equivalently, the voting process reaches consensus) within $t=O(\log n/\log k)$ steps w.h.p.
\end{enumerate}
\end{lemma}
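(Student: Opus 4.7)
The plan is to mirror the phased argument used to prove \cref{lem:mainlem}, but to track how all constants scale in $k$. By Stirling, $H'_{f_{2k+1}}(1/2)=(2k+1)\binom{2k}{k}/4^k=\Theta(\sqrt{k})$ and a direct computation gives $K_2(f_{2k+1})=\Theta(k)$; the product $K_2(f_{2k+1})\lambda^2$ that appears in the expander corrections of \cref{lem:second_approx,lem:approx_V} remains $o(1)$ under the hypothesis $\lambda\le Ck^{-1/2}n^{-1/4}$, and the analogous correction $K_1(g)\lambda\|\pi\|_3^{3/2}$ in \cref{lem:variance} remains $o(\|\pi\|_2^2)$ under $\|\pi\|_3\le Ck^{-1/6}n^{-1/2}$. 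Throughout, concentration of $\alpha'$ about $H_{f_{2k+1}}(\alpha)$ comes from Hoeffding applied to the independent indicators in \cref{eq:Xvdef}, with additive error $\|\pi\|_2\sqrt{\log n}=O(\sqrt{\log n/n})$.

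For \cref{lab:phase1_bok}, I would rerun \cref{lem:nazolemma} with the $k$-dependent constants. Condition~(i) (constant-probability $\Omega(\sqrt{n})$ fluctuation when the current bias is below $m=300C\sqrt{n}\log n$) follows from Berry--Esseen exactly as in the proof of \cref{lem:mainlem}\cref{lab:phase1}, since $\Var[\delta']=4\|\pi\|_2^2 g(1/2)+o(1/n)=\Theta(1/n)$ is independent of $k$. Condition~(ii) follows from \cref{lem:second_approx} plus Taylor expansion of $H_{f_{2k+1}}$ about $1/2$: one obtains $|\E[\delta']|\ge(1+\Omega(\sqrt{k}))|\delta|$ whenever $|\delta|\in[h/\sqrt{n},m/n]$, and Hoeffding then supplies the required exponential tail. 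Because the multiplicative growth per step is $1+\Omega(\sqrt{k})$ rather than $1+\Theta(1)$, reexamining the counting inside the proof of \cref{lem:nazolemma} replaces its $O(\log n)$ bound by $O(\log n/\log k)$. \cref{lab:phase2_bok} is then a direct analogue of \cref{lem:mainlem}\cref{lab:phase2}: the Hoeffding error $\|\pi\|_2\sqrt{\log n}$ is dominated by the $\Omega(\sqrt{k}|\delta|)$ drift whenever $|\delta|\ge 300C\log n/\sqrt{n}$, so $|\delta|$ grows multiplicatively by $1+\Omega(\sqrt{k})$ per round and reaches $1.25/\sqrt{k}$ in $O(\log n/\log k)$ rounds.

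\cref{lab:phase3_bok}, the single-step jump from $1.25/\sqrt{k}$ past $0.9$, is the most delicate part. Here linearization about $1/2$ is no longer valid, and one must evaluate $H_{f_{2k+1}}(1/2+1.25/(2\sqrt{k}))$ directly. The signal $1.25/(2\sqrt{k})$ is on the scale of the standard deviation of $\mathrm{Bin}(2k+1,1/2)/(2k+1)$, so a Berry--Esseen estimate on the binomial tail yields $H_{f_{2k+1}}(1/2+1.25/(2\sqrt{k}))\to\Phi(1.25\sqrt{2})>0.96$ as $k\to\infty$; combining with \cref{lem:second_approx} to handle the expander correction (of size $O(k\lambda^2)=o(1)$) and a final Hoeffding bound on the fluctuation, one step suffices to push $|\delta_1|>0.9$ w.h.p. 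For \cref{lab:phase4_bok}, I exploit $H_{f_{2k+1}}^{(j)}(0)=0$ for all $j\le k$, so $H_{f_{2k+1}}(\alpha)\le\binom{2k+1}{k+1}\alpha^{k+1}\le(4\alpha)^{k+1}$ for $\alpha\le 1/8$. Starting from $\alpha\le 0.05$, \cref{lem:second_approx} plus Hoeffding give $\alpha'\le 2(4\alpha)^{k+1}$ as long as $\alpha$ exceeds the error threshold $\Theta((\|\pi\|_2\sqrt{\log n})^{1/(k+1)})$, yielding super-exponential decay and driving $\alpha$ below the threshold in $O(\log\log n/\log k)$ rounds. Below the threshold, \cref{lem:consensus_lessthanone} applied with contraction factor $K=O(k^{-1/2})$ (or smaller, from the residual polynomial term) finishes the job in $O(\log n/\log k)$ further steps.

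The main obstacle I foresee is \cref{lab:phase3_bok}: the Berry--Esseen estimate for $H_{f_{2k+1}}$ must be both sharp enough and uniform enough in $k$ to certify the explicit constants $1.25$ and $0.9$, and the expander-correction terms from \cref{lem:second_approx,lem:approx_V} must remain genuinely negligible at each of the four phase boundaries, which depends delicately on the calibration of $\lambda$, $\|\pi\|_2$, and $\|\pi\|_3$ in the hypotheses as $k$ grows with $n$.
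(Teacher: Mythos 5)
Your phase decomposition and almost all of the key estimates coincide with the paper's proof: the bounds $H_{f_{2k+1}}'(1/2)=\Theta(\sqrt{k})$ and $K_2(f_{2k+1})=\Theta(k)$, the $\Omega(\sqrt{k})$ multiplicative drift for Phases \cref{lab:phase1_bok} and \cref{lab:phase2_bok} via a Taylor expansion about $1/2$ plus Hoeffding, the direct binomial tail estimate for Phase \cref{lab:phase3_bok} (the paper uses a Hoeffding bound on $\mathrm{Bin}(2k+1,\tfrac12+\delta)$ rather than Berry--Esseen, which is equivalent in spirit), and the bound $f_{2k+1}(x)\le(4x)^k\le x/(4k)$ for Phase \cref{lab:phase4_bok} (the paper uses this linear contraction in a single stage and iterates expectations directly, which is simpler than your two-stage super-exponential-then-linear argument, but both work).

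There is, however, one genuine gap, in Phase \cref{lab:phase1_bok}. You invoke the symmetry-breaking lemma with its original condition \cref{state:nazo_sqrt}: a \emph{constant} upper bound on the probability that the bias falls below $h\sqrt{n}$, and then assert that ``reexamining the counting'' converts the $O(\log n)$ hitting time into $O(\log n/\log k)$. This does not work. The hitting-time argument decomposes the trajectory into excursions, each of which must first return to the region $\{\Psi\ge h\cdot(\text{threshold})\}$; with only a constant per-step failure probability, the total waiting time over $T_1=O(\log n/\log k)$ excursions concentrates around $O(T_1)$ only with probability $1-\exp(-\Omega(T_1))$, which is \emph{not} $1-n^{-\Omega(1)}$ once $k=n^{\Theta(1)}$ (e.g.\ $k=\sqrt{n}$ gives $T_1=O(1)$). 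The paper's \cref{lem:nazolemma_bok} therefore strengthens the condition to \cref{state:nazo_sqrt_extend}: the probability of falling below the \emph{smaller} threshold $h\sqrt{n/k}$ must be at most $C_2/\sqrt{k}$, and this $1/\sqrt{k}$ factor is exactly what makes the final union bound $2^{T_2}(C_1/\sqrt{k})^{C_5\log n/\log k}=n^{O(1/\log k)-C_5/2}$ polynomially small. The fix is available from the same Berry--Esseen computation you already cite (\cref{cor:BEbound_cor2}): since $\Var[\delta']=\Theta(1/n)$ uniformly in this phase, the probability that $|\delta'|$ lands in an interval of length $O(\sqrt{1/(kn)})$ around zero is $\Phi(O(1/\sqrt{k}))-\Phi(-O(1/\sqrt{k}))+o(1)=O(1/\sqrt{k})$; you must also check that the drift condition \cref{state:nazo_cher_extend} holds down to the lower threshold $h\sqrt{n/k}$ rather than $h\sqrt{n}$, which the paper does in \cref{lem:delta_lower_half_bok}. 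Without this strengthening the ``w.h.p.'' claim of Phase \cref{lab:phase1_bok} fails for large $k$.
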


\begin{proof}[Proof of \cref{thm:bok} using \cref{lem:mainlem_bok}]
\Cref{thm:bok} is straightforward from
\cref{lem:mainlem_bok}.
For any initial configuration $A_0\subseteq V$, $A_0$ satisfies one of \cref{lab:phase1_bok} to \cref{lab:phase4_bok}.
If $A_0$ satisfies \cref{lab:phase4_bok}, the consensus time is $O(\log n/\log k)$.
Otherwise, from \cref{lem:mainlem_bok}, for some $t=O(\log n/\log k)$, $A_t$ satisfies $|\delta(A_t)|>0.9$ and then apply \cref{lem:mainlem_bok}\cref{lab:phase4_bok}.
\end{proof}

The rest of this section is
devoted to prove \cref{lem:mainlem_bok}.
We begin with preparing useful facts concerning with best-of-$(2k+1)$.
Let $f_{2k+1}$ be the betrayal function of best-of-$(2k+1)$.
Then, we have
\begin{align*}
    &\left|f'_{2k+1}\left(\frac{1}{2}\right)\right| = (2k+1)\binom{2k}{k}4^{-k} \geq 1.05\sqrt{k},\\
    &|f'_{2k+1}(x)| \leq \left|f'_{2k+1}\left(\frac{1}{2}\right)\right| \leq \frac{3}{\sqrt{\pi}}\sqrt{k} \leq 2\sqrt{k},\\
    &|f''_{2k+1}(x)| \leq \left|f''_{2k+1}\left(\frac{1}{2}+\frac{1}{2\sqrt{2k-1}}\right)\right| <1.6k
 \end{align*}
for sufficiently large $k$.
Here, we used $\frac{4^k}{\sqrt{\pi k}} \left(1-\frac{1}{8k}\right) \leq \binom{2k}{k} \leq \frac{4^k}{\sqrt{\pi k}}$.

From \cref{lem:expectationandvariance_symmetry} 
and \cref{lem:variance} (note that $f_{2k+1}(x)$ satisfies $f_{2k+1}(x)+f_{2k+1}(1-x)=1$), it holds for all $A\subseteq V$ that
\begin{align}
    &\bigl|\E[\alpha'] - f_{2k+1}(\alpha)\bigr| \leq 0.4k\lambda(\delta+\lambda)\alpha(1-\alpha) \label{eq:epia'bound_bok}    \\
    &\bigl|\Var[\alpha'] - g_{2k+1}(1/2)\|\pi\|_2^2\bigr| \leq
    2\sqrt{k}\left(\frac{\|\pi\|_2^2}{2}|\delta|+ \lambda \|\pi\|_3^{3/2}\right),\label{eq:vpia'bound_bok}
\end{align}
where $g_{2k+1}(x)=f(x)(1-f(x))$.
Note that $g'_{2k+1}(x)=f'_{2k+1}(x)(1-2f_{2k+1}(x))$ satisfies $|g'_{2k+1}(x)|\leq |f'_{2k+1}(x)|\leq 2\sqrt{k}$.
Thus, from the Hoeffding bound (\cref{lem:Hoeffding}), it holds w.h.p.~that
\begin{align}
    |\alpha'-f_{2k+1}(\alpha)| &\leq 0.4k\lambda^2\alpha(1-\alpha) + \|\pi\|_2\sqrt{\log n} \nonumber\\
    &\leq 0.4k\lambda^2\alpha(1-\alpha) + \sqrt{\frac{C\log n}{n}}. \label{eq:pia'bound_bok}
\end{align}
On the other hand,
it is routine to check the following facts.
\begin{align}
&\lambda\sqrt{k\|\pi\|_3^3} = o(n^{-1}), \label{eq:cond2_bok}\\
&\frac{1}{n} \leq \|\pi\|_2^2 \leq \frac{C}{n}, \label{eq:cond3_bok}\\
&k\lambda^2=O(1/\sqrt{n}) \label{eq:cond4_bok}.
\end{align}

We begin with proving the following result that corresponds to \cref{lem:delta_lower_half}.
\begin{lemma} \label{lem:delta_lower_half_bok}
There exists constants $h,c>0$ such that,
for any $A\subseteq V$ satisfying
$h/\sqrt{nk} \leq |\delta| \leq 1.25/\sqrt{k}$, 
\begin{align*}
    \Pr[|\delta'|<0.025\sqrt{k}|\delta|] \leq \exp\left(-ckn\delta^2\right).
\end{align*}
\end{lemma}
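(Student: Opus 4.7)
The plan is to mirror the proof of \cref{lem:delta_lower_half}, replacing the \qmaj constants with the best-of-$(2k+1)$-specific ones given just before the statement. First, I would Taylor-expand $f_{2k+1}$ about its fixed point $1/2$. Since $f_{2k+1}(1-x)=1-f_{2k+1}(x)$, we have $f_{2k+1}(1/2)=1/2$ and $f''_{2k+1}(1/2)=0$; combined with the supremum bound $|f''_{2k+1}(x)|\leq 1.6k$, Taylor's theorem with remainder yields
\[ \bigl|\,2f_{2k+1}(\alpha)-1-f'_{2k+1}(1/2)\,\delta\,\bigr| \leq 0.4\,k\,\delta^2. \]
Doubling \cref{eq:epia'bound_bok} and applying the triangle inequality, this gives
\[ \bigl|\,\E[\delta']-f'_{2k+1}(1/2)\,\delta\,\bigr| \leq 0.8\,k\,\lambda(|\delta|+\lambda)\,\alpha(1-\alpha) + 0.4\,k\,\delta^2. \]

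Next, I would use the lower bound $f'_{2k+1}(1/2)\geq 1.05\sqrt{k}$, the hypothesis $|\delta|\leq 1.25/\sqrt{k}$ (so that $0.4\,k\,\delta^2\leq 0.5\sqrt{k}\,|\delta|$), and the assumption $\lambda\leq Ck^{-1/2}n^{-1/4}$ together with \cref{eq:cond4_bok} (which yields $k\lambda^2=O(n^{-1/2})$ and $k\lambda=o(\sqrt{k})$), to conclude
\[ |\E[\delta']| \geq 0.55\sqrt{k}\,|\delta| - O(k\lambda^2) - o(\sqrt{k}\,|\delta|). \]
Choosing the constant $h$ in $|\delta|\geq h/\sqrt{nk}$ sufficiently large, the $O(k\lambda^2)=O(1/\sqrt{n})$ contribution is dominated by $\sqrt{k}\,|\delta|\geq h/\sqrt{n}$, so we arrive at $|\E[\delta']|\geq 0.5\sqrt{k}\,|\delta|$ throughout the stated range.

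Finally, I would apply \cref{cor:Hoeffding_abs} to $\delta'=\sum_{v\in V}\pi(v)(2X_v-1)$, a sum of independent random variables weighted by $\pi(v)$. Setting $\kappa=|\E[\delta']|-0.025\sqrt{k}\,|\delta|\geq 0.475\sqrt{k}\,|\delta|$ and using $\|\pi\|_2^2\leq C/n$ from \cref{eq:cond3_bok},
\[ \Pr\bigl[\,|\delta'|\leq 0.025\sqrt{k}\,|\delta|\,\bigr] \leq 2\exp\!\left(-\frac{(0.475)^2\,k\,\delta^2}{2\|\pi\|_2^2}\right) \leq \exp(-c\,k\,n\,\delta^2) \]
for a suitable constant $c>0$, with the prefactor $2$ absorbed into the exponent once $kn\delta^2\geq h^2$ is large enough.

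The delicate part is bookkeeping near the upper end of the regime, $|\delta|\approx 1.25/\sqrt{k}$: there, the Taylor quadratic error $0.4\,k\,\delta^2$ reaches $0.5\sqrt{k}\,|\delta|$ and nearly cancels half of the linear drift $1.05\sqrt{k}\,|\delta|$. The constant $1.25$ in the hypothesis is calibrated so that the net drift still exceeds the target $0.025\sqrt{k}\,|\delta|$ by a margin of roughly $0.475\sqrt{k}\,|\delta|$, which is exactly what is needed to run Hoeffding and obtain the stated exponential tail. The main technical care is ensuring that the $\lambda$-dependent error terms do not erode this margin, which is precisely guaranteed by the hypotheses $\lambda\leq Ck^{-1/2}n^{-1/4}$ and $|\delta|\geq h/\sqrt{nk}$ (for $h$ sufficiently large in terms of $C$).
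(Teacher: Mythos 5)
Your proposal is correct and follows essentially the same route as the paper's proof: Taylor expansion of $f_{2k+1}$ at $1/2$ using $f'_{2k+1}(1/2)\geq 1.05\sqrt{k}$ and $|f''_{2k+1}|\leq 1.6k$, the constraint $|\delta|\leq 1.25/\sqrt{k}$ to keep the quadratic remainder below the linear drift, the lower bound $|\delta|\geq h/\sqrt{nk}$ to absorb the $O(k\lambda^2)=O(1/\sqrt{n})$ expander error, and then \cref{cor:Hoeffding_abs} with $\|\pi\|_2^2\leq C/n$. Your bookkeeping is in fact slightly tighter than the paper's (you retain $0.55\sqrt{k}|\delta|$ of drift versus the paper's $0.05\sqrt{k}|\delta|$, and you explicitly track the $k\lambda|\delta|$ cross term), but the argument is the same.
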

\begin{proof}
Let $h$ be a sufficiently large constant
and
let $A\subseteq V$ be a configuration satisfying $\frac{h}{\sqrt{kn}} \leq |\delta| \leq \frac{1.25}{\sqrt{k}}$.
From \cref{eq:epia'bound_bok}, \cref{eq:cond4_bok} and Taylor's theorem, we have 
\begin{align*}
|\E[\delta']| &\geq \left|2f_{2k+1}\left(\frac{1}{2}+\frac{\delta}{2}\right)-1\right|-0.4k\lambda^2\alpha(1-\alpha) \\
&\geq f'_{2k+1}\left(\frac{1}{2}\right)|\delta| - \max_{0\leq z\leq 1}|f''_{2k+1}(z)|\frac{\delta^2}{2} - 0.1k\lambda^2 \\
&\geq 0.05\sqrt{k}|\delta|+(\sqrt{k}|\delta|-0.8k\delta^2) - 0.1k\lambda^2 \\
&\geq 0.05\sqrt{k}|\delta| + \frac{0.01h}{\sqrt{n}} - 0.1k\lambda^2 \\
&\geq 0.05\sqrt{k}|\delta|. 
\end{align*}
In the fourth inequality, note that
$\sqrt{k}|\delta|\geq 0.8k\delta^2$ holds
if $|\delta|\leq 1.25/\sqrt{k}$.
In the last inequality, we used $\lambda=O(k^{-0.5}n^{-0.25})$
and thus $k\lambda^2=O(1/\sqrt{n})\leq 0.01h/\sqrt{n}$ for sufficiently large constant $h$.
Then, from \cref{cor:Hoeffding_abs}, we have
\begin{align*}
    \Pr[ |\delta'| < 0.025\sqrt{k}|\delta|]
    &\leq \Pr\left[|\delta'| < 0.5|\E[\delta']|\right] \\
    &\leq 2\exp\left(-\frac{0.5|\E[\delta']|^2}{\|\pi\|_2^2}\right) \\
    &\leq \exp\left(-ckn\delta^2\right)
\end{align*}
for some suitable constant $c>0$.
In the last inequality, we used \cref{eq:cond3_bok}.
\end{proof}

\subsection[Phase 1 (Best-of-k)]{Phase \cref{lab:phase1_bok}:~$0\leq |\delta|\leq 300C\log n/\sqrt{n}$} \label{sec:symmetrybreaking_bok}
In this part, we show \cref{lem:mainlem_bok}\cref{lab:phase1_bok}.
The proof is almost same as that of \cref{lem:mainlem}\cref{lab:phase1} that is presented in \cref{sec:symmetrybreaking}.
The difference is that we use the following result, which is a slight modification of \cref{lem:nazolemma}.

\begin{lemma}[Modification of \cref{lem:nazolemma}] \label{lem:nazolemma_bok}
Consider a Markov chain $(X_t)_{t=1}^\infty$ with finite state space $\Omega$ and a function $\Psi:\Omega \to [0,n]$.
Let $C_1$ be an arbitrary constant and $m=C_1 \sqrt{n}\log n$.
Let $k=k(n)$ be a function such that $k(n)\to\infty$ as $n\to\infty$.
Suppose that $\Omega,\Psi$ and $m$ satisfies the following conditions:

\begin{enumerate}[label=$(\roman*)$]
\item\label{state:nazo_sqrt_extend} For any positive constant $h$, there exists a positive constant $C_2<1$ such that
\begin{eqnarray*}
\Pr\left[\Psi(X_{t+1})<h\sqrt{\frac{n}{k}}\,\middle|\,\Psi(X_t)\leq m\right] < \frac{C_2}{\sqrt{k}}.
\end{eqnarray*}

\item\label{state:nazo_cher_extend} Three positive constants $C_3,C_4$ and $h$ exist such that, for any $x\in \Omega$ satisfying $h\sqrt{n/k}\leq \Psi(x)<m$,
\begin{align*}
\Pr\left[\Psi(X_{t+1})< C_3\sqrt{k}\Psi(X_t) \,\middle|\, X_t=x\right] < \exp\left(-C_4\frac{k\Psi(x)^2}{n}\right).
\end{align*}
\end{enumerate}

Then, $\Psi(X_t) \geq m$ holds w.h.p.~for some $t=O(\log n/\log k)$.
\end{lemma}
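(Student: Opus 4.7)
The plan is to adapt the proof of the original \cref{lem:nazolemma} from~\cite{CGGNPS18}, rescaling the relevant quantities by factors of $\sqrt{k}$. The argument naturally divides into two phases, combined via a restart-style analysis, reflecting the two hypotheses \cref{state:nazo_sqrt_extend} and \cref{state:nazo_cher_extend}.

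First, I would use condition~\cref{state:nazo_sqrt_extend} to show that the process reaches the threshold $\Psi(X_t)\geq h\sqrt{n/k}$ within $O(\log n/\log k)$ steps with high probability. Since the per-step failure probability is at most $C_2/\sqrt{k}$ (uniformly over all states with $\Psi\leq m$), the probability of failing to cross the threshold for $r$ consecutive steps is at most $(C_2/\sqrt{k})^r\leq k^{-r/2}$. Taking $r=\lceil c\log n/\log k\rceil$ for a suitable constant $c>0$ yields $k^{-r/2}\leq n^{-c/2}=n^{-\Omega(1)}$. This is the key improvement over the original \cref{lem:nazolemma}, where the corresponding bound was only a constant $C_1<1$.

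Second, once $\Psi(X_t)\geq h\sqrt{n/k}$, condition~\cref{state:nazo_cher_extend} gives geometric growth by factor $C_3\sqrt{k}$ per step. Since $m/(h\sqrt{n/k})=\Theta(\sqrt{k}\log n)$, only $T^{*}=O(1+\log\log n/\log k)=O(\log n/\log k)$ consecutive successful growths are required to reach $\Psi\geq m$. The probability that all $T^{*}$ growths succeed, starting from $\Psi\geq h\sqrt{n/k}$, is at least $\prod_{j=0}^{T^{*}-1}\bigl(1-\exp(-C_4 h^2(C_3^2 k)^j)\bigr)\geq 1-2\exp(-C_4 h^2)$, a constant bounded away from $0$; crucially, the failure probability across successive $j$ decreases super-exponentially in $k$, so only the first growth step contributes meaningfully.

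Combining the phases, a single ``trial''---wait for the threshold, then attempt $T^{*}$ growths---takes $O(\log n/\log k)$ steps and succeeds with a constant probability $p_0>0$. Upon failure, condition~\cref{state:nazo_sqrt_extend} guarantees quick return to the threshold, allowing another trial. The main obstacle will be showing that the restart mechanism combined with trials of length $O(\log n/\log k)$ gives success within $O(\log n/\log k)$ total steps w.h.p., rather than the naive $O(\log^2 n/\log k)$ from sequential independent trials. The resolution is to exploit the $\sqrt{k}$-improvement in the restart probability: because condition~\cref{state:nazo_sqrt_extend} gives failure probability $O(1/\sqrt{k})$ per step (not a constant), the restarts contribute only $O(\log n/\log k)$ overhead, matching the growth-phase length. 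A careful coupling on the log-scale of $\Psi$, analogous to the martingale bookkeeping in~\cite{CGGNPS18}, then yields the claimed high-probability bound.
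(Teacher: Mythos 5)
Your two-phase setup and the per-trial computations are correct and match the skeleton of the paper's argument, but there is a genuine gap at exactly the point you flag as ``the main obstacle,'' and your proposed resolution does not close it. You attribute the saving to the $O(1/\sqrt{k})$ failure probability in condition~(i); that quantity only controls how quickly the process returns above the threshold $h\sqrt{n/k}$ after a failed climb, not how many failed climbs must be tolerated. In your trial-based accounting, each climb attempt fails with probability as large as $\exp(-C_4h^2)$ --- a constant independent of $k$, coming from the very first growth step at $\Psi\approx h\sqrt{n/k}$ --- so forcing the overall failure probability below $n^{-\Omega(1)}$ requires $\Omega(\log n)$ attempts no matter how fast the restarts are. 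Since each attempt costs at least one step, this already yields $\Omega(\log n)$ steps, exceeding the target $O(\log n/\log k)$ whenever $\log k=\omega(1)$. No refinement of the restart analysis can repair this; the all-or-nothing trial decomposition is itself the problem.

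The paper's proof (adapted from the source of \cref{lem:nazolemma}) takes a route you only gesture at in your final sentence. It passes to the subsequence $R_i=X_{\tau(i)}$ of above-threshold times and introduces the exponential potential $Y_i=\exp(-\Psi(R_i)/\sqrt{n})$. Writing $z=\sqrt{k}\,\Psi(x)/\sqrt{n}\geq h$, condition~(ii) gives $\E[Y_{i+1}\mid R_i=x]\leq\exp(-C_4z^2)+\exp(-C_3z)$, which the paper turns into a contraction of $\E[Y_i]$ by a factor of order $1/(C_3\sqrt{k})$ per index; Markov's inequality at scale $\exp(-m/\sqrt{n})=n^{-C_1}$ then yields $\Pr[\Psi(R_i)<m]\leq n^{C_1}\cdot\tfrac{1}{2}(C_3\sqrt{k})^{-(i-2)}\leq n^{-1}$ already for $i=O(\log n/\log k)$. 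The point is that the exponential potential lets the enormous multiplicative gain on a successful step compensate, \emph{in expectation}, for the rare total loss on a failed one --- precisely the amortization that a union bound over independent all-or-nothing trials cannot capture. Condition~(i) enters only at the very end: the number of real steps needed to accumulate $O(\log n/\log k)$ above-threshold indices is bounded by stochastic domination with i.i.d.\ Bernoulli variables of failure probability $C_2/\sqrt{k}$ and a binomial tail bound. To complete your proof you would need to replace your third paragraph with this potential-function argument (or an equivalent amortized analysis on the subsequence $(R_i)$).
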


We prove \cref{lem:nazolemma_bok} in \cref{sec:proofofnazolemma_bok}.
\Cref{lem:mainlem_bok}\cref{lab:phase1_bok} is immediate from
\cref{lem:nazolemma_bok} with letting $\Psi(A)=n|\delta|$ and $C_1=300C$.
Hence, it suffices to verify the conditions \cref{state:nazo_sqrt_extend} and \cref{state:nazo_cher_extend}.

\paragraph*{Condition \cref{state:nazo_sqrt_extend}.}
First we evaluate the variance $\Var[\delta']$.
\begin{claim}
Under the same assumption as \cref{lem:mainlem_bok}\cref{lab:phase1_bok},
\begin{align*}
    \Var[\delta'] \geq \frac{0.99}{n}.
\end{align*}
\end{claim}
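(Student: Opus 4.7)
The plan is to combine the variance estimate~(\ref{eq:vpia'bound_bok}) with the identity $\Var[\delta']=4\Var[\alpha']$ coming from $\delta'=2\alpha'-1$. First I would evaluate the leading coefficient: by the symmetry $f_{2k+1}(x)+f_{2k+1}(1-x)=1$ we have $f_{2k+1}(1/2)=1/2$, hence $g_{2k+1}(1/2)=f_{2k+1}(1/2)\bigl(1-f_{2k+1}(1/2)\bigr)=1/4$. Substituting this into~(\ref{eq:vpia'bound_bok}) and multiplying through by $4$ gives the pointwise lower bound
\begin{align*}
\Var[\delta'] \;\geq\; \|\pi\|_2^2 \;-\; 4\sqrt{k}\,\|\pi\|_2^2\,|\delta| \;-\; 8\sqrt{k}\,\lambda\,\|\pi\|_3^{3/2}.
\end{align*}

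Next I would extract the main term using $\|\pi\|_2^2\geq 1/n$ from~(\ref{eq:cond3_bok}), and then show that each of the two error terms is $o(1/n)$. For the second error term this is immediate, since $8\sqrt{k}\lambda\|\pi\|_3^{3/2}=8\lambda\sqrt{k\|\pi\|_3^3}=o(1/n)$ by~(\ref{eq:cond2_bok}). For the first error term I would invoke the Phase~\cref{lab:phase1_bok} hypothesis $|\delta|\leq 300C\log n/\sqrt{n}$ together with $\|\pi\|_2\leq C/\sqrt{n}$ from~(\ref{eq:cond3_bok}) to get $4\sqrt{k}\,\|\pi\|_2^2|\delta|=O\bigl(\sqrt{k}\log n/n^{3/2}\bigr)$, which is $o(1/n)$ under the ambient hypothesis on the growth of $k$ (cf.~the standing assumptions in~\cref{thm:bok}). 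Summing the two error bounds and combining with $\|\pi\|_2^2\geq 1/n$ yields $\Var[\delta']\geq (1-o(1))/n$, which is $\geq 0.99/n$ for $n$ large enough; this is the desired claim.

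The only real subtlety is bookkeeping: the first-order correction scales with $\sqrt{k}$, so one has to verify that the product $\sqrt{k}\,\|\pi\|_2^2|\delta|$ still lies below the $1/n$ main term in the full parameter regime allowed by~\cref{thm:bok}. Everything else is a direct substitution into the already-established estimate~(\ref{eq:vpia'bound_bok}) and the routine bounds~(\ref{eq:cond2_bok})--(\ref{eq:cond3_bok}); no further probabilistic input is needed.
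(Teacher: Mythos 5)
Your argument is correct and follows the paper's own proof essentially verbatim: both start from \cref{eq:vpia'bound_bok}, use $g_{2k+1}(1/2)=1/4$ together with $\|\pi\|_2^2\geq 1/n$ for the main term, and absorb the two error terms via \cref{eq:cond2_bok}, \cref{eq:cond3_bok} and the Phase~\cref{lab:phase1_bok} bound on $|\delta|$. The one bookkeeping subtlety you flag --- that $\sqrt{k}\,\|\pi\|_2^2\,|\delta|=O(\sqrt{k}\log n/n^{3/2})$ must fall below the $1/n$ main term across the full range of $k$ --- is present in, and treated just as tersely by, the paper's own calculation.
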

\begin{proof}[Proof of the claim]
Note that $\Var[\delta']=4\Var[\alpha']$.
From \cref{eq:vpia'bound_bok}, we can evaluate the variance $\Var[\alpha']$ as follows:
\begin{align*}
\Var[\alpha'] 
&\geq g_{2k+1}(1/2)\|\pi\|_2^2
-\sqrt{k}|\delta|\|\pi\|_2^2
-2\sqrt{k}\lambda\|\pi\|_3^{3/2} \\
&\geq \frac{1}{4n} -
3\sqrt{k}\left(300C^2\sqrt{\frac{\log n}{n^3}}+\lambda \|\pi\|_3^{3/2}\right) \hspace{0.5em}\text{(from \cref{eq:cond3_bok})} \\
&= \frac{1-o(1)}{4n} \hspace{.5em}\text{(since $k=o(\log n/n)$) and \cref{eq:cond2_bok})} \\
&\geq \frac{0.99}{4n}.
\end{align*}
\end{proof}
From \cref{cor:BEbound_cor2}, for any positive real $x$, we have
\begin{align*}
    \Pr\left[|\delta'|\leq x\sqrt{\frac{0.99}{n}}\right]
    \leq \Phi(x)+\frac{5.6\|\pi\|_3^3}{\Var[\delta']^{3/2}} = \Phi(x)+o(1),
\end{align*}
where $\Phi(x)=\frac{1}{\sqrt{2\pi}}\int_{-\infty}^x\mathrm{e}^{-y^2/2}\mathrm{d}y$ (see \cref{eq:biasprob}).
This yields the condition \cref{state:nazo_sqrt_extend}.

\paragraph*{Condition \cref{state:nazo_cher_extend}.}
This condition directly follows
\cref{lem:delta_lower_half_bok}
by substituting $|\delta|=\frac{\Psi(A)}{n}$.

\subsection[Phase 2 (Best-of-k)]{Phase \cref{lab:phase2_bok}:~$300C\log n/\sqrt{n}\leq |\delta|\leq 1.25/\sqrt{k}$}
Since $|\delta|\geq 300C\log n/\sqrt{n}$,
from \cref{lem:delta_lower_half_bok},
we have
\begin{align*}
    \Pr[|\delta'|<0.025\sqrt{k}|\delta|]
    \leq \exp\left(-\frac{ckn(\log n)^2}{n}\right) \leq n^{-2}
\end{align*}
if $k$ is sufficiently large.
Thus, we have $|\delta_t|\geq (0.025\sqrt{k})^t\cdot 300C\log n/\sqrt{n}$ with probability $(1-n^{-2})^{t}$.
Therefore, for some $t=O(\log n/\log k)$, $|\delta_t|\geq 1.25/\sqrt{k}$ with probability $1-n^{-1.9}$.

\subsection[Phase 3 (Best-of-k)]{Phase \cref{lab:phase3_bok}:~$1.25/\sqrt{k} < |\delta| \leq 0.9$}
We may assume that $\delta\geq 0$ without loss of generality (otherwise, consider $A^c$).
From \cref{eq:pia'bound_bok}, we have
\begin{align*}
    \delta' &\geq 2f_{2k+1}\left(\frac{1}{2}+\frac{\delta}{2}\right) -1 - 0.4k\lambda^2 - 2\sqrt{\frac{K\log n}{n}} \\
    &\geq 2f_{2k+1}\left(\frac{1}{2}+\frac{\delta}{2}\right) -1 -o(1).
 \end{align*}
We claim that $2f_{2k+1}\left(\frac{1}{2}+\frac{\delta}{2}\right)-1>0.9$ during this phase (for sufficiently large $n$ and $k$).
Let $\mathrm{Bin}(N,p)$ denote the random variable of binomial distribution with $N$ trials and probability $p$.
Then, from the definition of $f_{2k+1}$, it holds that
\begin{align}
    f_{2k+1}\left(\frac{1}{2}+\delta\right)
    &= \Pr\left[\mathrm{Bin}\left(2k+1,\frac{1}{2}+\delta\right)\geq k+1\right] \nonumber\\
    &= 1-\Pr\left[\mathrm{Bin}\left(2k+1,\frac{1}{2}+\delta\right)\leq k\right]. \label{eq:f2k+1bound1}
\end{align}
Let $\mu=(2k+1)(1/2+\delta)$ be the expectation of $\mathrm{Bin}(2k+1,1/2+\delta)$.
Then, since $\mu-k \geq 2k\delta$, we have
\begin{align}
    \Pr\left[\mathrm{Bin}\left(2k+1,\frac{1}{2}+\delta\right)\leq k\right]
    &\leq 
    \Pr\left[\mathrm{Bin}\left(2k+1,\frac{1}{2}+\delta\right)\leq \mu-(\mu-k)\right] \nonumber\\
    &\leq
    \Pr\left[\mathrm{Bin}\left(2k+1,\frac{1}{2}+\delta\right)\leq \mu-2k\delta\right] \nonumber\\   
    &\leq
    \exp(-2k\delta^2). \label{eq:f2k+1bound2}
\end{align}
In the third inequality, we applied the Hoeffding bound (\cref{lem:Hoeffding}).
If $\delta \geq \frac{1.25}{\sqrt{k}}$, by combining \cref{eq:f2k+1bound1,eq:f2k+1bound2}, we obtain
\begin{align*}
    f_{2k+1}\left(\frac{1}{2}+\delta\right) &\geq 1-\exp(-2k\delta^2) \\
    &\geq 1-\mathrm{e}^{-3.125} \\
    &> 0.92.
\end{align*}
Thus, from \cref{eq:pia'bound_bok},
$\delta'\geq 0.92-o(1) > 0.9$ holds w.h.p.

\subsection[Phase 4 (Best-of-k)]{Phase \cref{lab:phase4_bok}:~$0.9 < |\delta| \leq 1$}
We may assume $\pi(A_0)\leq 0.1$ without loss of generality.
We claim that $\pi(A_t)<\frac{1}{n^2}$ for some $t=O(\log n/\log k)$, which implies $A_t=\emptyset$ (since $\pi(S)\geq \frac{1}{2m} \geq \frac{1}{n^2}$ whenever $S\neq \emptyset$).

Observe that
\begin{align*}
    f_{2k+1}(x) &= \sum_{i=k+1}^{2k+1} \binom{2k+1}{i}x^i(1-x)^{2k+1-i} \\
    &\leq (4x)^k \\
    &\leq \frac{x}{4k}
\end{align*}
whenever $x\leq 0.1\leq 4^{-1}(16k)^{1/(k-1)}$ with $k\geq 2$.
Therefore, from \cref{eq:epia'bound_bok}, we have
\begin{align*}
    \E[\alpha'] &\leq \left(\frac{1}{4k} + 0.4k\lambda^2\right)\alpha.
\end{align*}
From \cref{eq:pia'bound_bok} and the upper bound of $\E[\alpha']$ above,
it holds with probability $1-O(n^{-3})$
that $\alpha'\leq 0.9$ conditioned on $\alpha\leq 0.1$.
Thus, 
\begin{align*}
    \E[\pi(A_t)] \leq \left(\frac{1}{4k} + 0.4k\lambda^2\right)^t + n^{-3+o(1)} \leq n^{-3+o(1)}
\end{align*}
for some $t=O(\log n/\log k + \log n/\log \lambda^{-1})=O(\log n/\log k)$ (note that $\lambda^{-1}=\Omega(n^{1/4})$ from \cref{eq:cond4_bok}).
For this $t$, we have $\Pr[A_t\neq\emptyset] \leq \Pr[\pi(A_t)\geq n^{-2}]\leq  n^2\E[\pi(A_t)]=O(n^{-1})$.
This completes the proof of \cref{lem:mainlem_bok} as well as
\cref{thm:bok}.

\subsection[Proof of Lemma 6.3]{Proof of \cref{lem:nazolemma_bok}} \label{sec:proofofnazolemma_bok}
The proof is essentially given in \cite{CGGNPS18}.
By
inspecting the proof of \cite{CGGNPS18} with evaluating constant terms carefully,
we obtain \cref{lem:nazolemma_bok}.
For completeness, let us present it here.

Let $m=C_1\sqrt{n}\log n$.
Let $\tau=\inf\{ t\in \mathbb{N}:\Psi(X_t)\geq m\}$ and $\{\tau(i)\}_{i\in\mathbb{N}}$ be the hitting times defined as
\begin{align*}
    \begin{cases}
    \tau(0) = 0,\\
    \tau(i) = \inf_{t\in \mathbb{N}}\{t:\tau(i-1)<t< \tau,f(X_t)\geq h\sqrt{n/k}\}.
    \end{cases}
\end{align*}
Let $R_1,R_2,\ldots$ be the sequence of random variables defined as $R_i=X_{\tau(i)}$.
It is shown in \cite{CGGNPS18} that
\begin{itemize}
    \item The sequence $(R_i)_{i\in\mathbb{N}}$ is a Markov chain.
    \item The sequence $(R_i)_{i\in\mathbb{N}}$ satisfies
    \begin{align*}
        \Pr[\Psi(R_{i+1}) < C_3\sqrt{k}\Psi(R_i)|R_i=x] < \exp\left(-C_4\frac{k\Psi(x)^2}{n}\right)
    \end{align*}
    for any $x\in\Omega$ that $h\sqrt{n/k}\leq \Phi(x) < m$.
\end{itemize}

We claim that $\Psi(R_i)\geq m$ for some $i=O(\log n/\log k)$.
To prove this, we use the Markov inequality.
Fix a state $x\in \Omega$ such that $h\sqrt{n/k}\leq \Psi(x)<m$ for a sufficiently large constant $h$.
Let $Y_i=\exp(-\frac{\Psi(R_i)}{\sqrt{n}})$ for each $i$.
Let $y=\exp(-\frac{\Psi(x)}{\sqrt{n}})$ and $z=z(x)=\frac{\sqrt{k}\Psi(x)}{\sqrt{n}}\geq h$ for $x\in \Omega$.
Note that $\mathrm{e}^{z} = y^{-\sqrt{k}}$.
Then, we have
\begin{align*}
    &\E[Y_{i+1}|R_i=x] \\
    &\leq \Pr[\Psi(R_{i+1})<C_3\sqrt{k}\Psi(x)]+\Pr[\Psi(R_{i+1})\geq C_3\sqrt{k}\Psi(x)]\cdot \exp\left(-C_3\sqrt{k}\frac{\Psi(x)}{\sqrt{n}}\right) \\
    &\leq \exp\left(-C_4\frac{k\Psi(x)^2}{n}\right) + \exp\left(-C_3\frac{\sqrt{k}\Psi(x)}{\sqrt{n}}\right) \\
    &= \exp\left(-C_4z^2\right)+\exp\left(-C_3z\right)\\
    &= y^{-\frac{C_3}{2}\sqrt{k}}
    \left(\exp\left(\frac{C_3}{2}z-C_4z^2\right)+\exp\left(-\frac{C_3}{2}z\right)\right) \\
    &\leq \frac{1}{2}y^{\frac{C_3}{2}\sqrt{k}} \hspace{2em}\left(\text{since $z \geq h$ is sufficiently large and $C_2>1$}\right) \\
    &\leq \begin{cases}
    \frac{1}{2} & \text{if $\frac{1}{2}< y_i\leq 1$},\\
    \frac{y}{C_3\sqrt{k}} & \text{if $y_i\leq\frac{1}{2}$}.
    \end{cases}
\end{align*}
In the second part of the last inequality, we assume that $k\geq 2$; hence, it holds that $r^a \leq \frac{r}{a}$ for $0\leq r\leq \frac{1}{2}$ if $a\geq 2$.
Note that for each $i\geq 1$, the random variable $\Psi(R_i)=\Psi(X_{\tau(i)})$ satisfies $h\sqrt{n/k}\leq \Psi(R_i) < m$.
Then, we have
\begin{align*}
    \E[Y_i] \leq \frac{1}{2} \left(\frac{1}{C_3\sqrt{k}}\right)^{i-2}
\end{align*}
and thus, by the Markov inequality,
\begin{align*}
    \Pr[\Psi(R_i)<m] &= \Pr\left[Y_i > \exp\left(-\frac{m}{\sqrt{n}}\right)\right] \\
    &\leq \exp\left(\frac{m}{\sqrt{n}}\right)
    \frac{1}{2}\left(\frac{1}{C_3\sqrt{k}}\right)^{i-2} \\
    &= \frac{n^{C_1}}{2(C_3\sqrt{k})^{i-2}}\\
    &\leq n^{-1}
\end{align*}
for $i=\lfloor C_5 \log n/\log k\rfloor$ for some constant $C_5$ that depends on $C_1$ and $C_3$.

Finally, we consider $\tau(\lfloor C_5\log n / \log k \rfloor)$.
Let $W_0,W_1,\ldots$ be binary random variables defined as
\begin{align*}
    W_t &= \begin{cases}
        1 & \text{if $\Psi(X_t)\geq h\sqrt{\frac{n}{k}}$},\\
        0 & \text{otherwise}.
    \end{cases}
\end{align*}
Note that $\Pr[\tau(T_1)\geq T_2] = \Pr[\sum_{t=1}^{T_2} W_t \leq T_1]$.
Let $\hat{W}_0,\hat{W}_1,\ldots$ be i.i.d.~binary random variables such that $\E[\hat{W}_t]=  1-\frac{C_1}{\sqrt{k}}$.
From the condition \cref{state:nazo_sqrt_extend}, for every $T$, the sum $\sum_{t=1}^T \hat{W}_t$ has stochastic dominance over $\sum_{t=1}^T W_t$.
Therefore, setting $T_1=\lfloor \frac{C_4\log n}{\log k} \rfloor$ and $T_2=\lceil \frac{2C_4\log n}{\log k}\rceil$, we obtain
\begin{align*}
    \Pr\left[\tau\left(\left\lfloor \frac{C_5\log n}{\log k} \right\rfloor\right)\geq T_2\right]
    &= \Pr\left[\sum_{t=1}^{T_2} W_t \leq \left\lfloor \frac{C_5\log n}{\log k} \right\rfloor\right] \\
    &\leq \Pr\left[\sum_{t=1}^{T_2} W_t \leq \frac{C_5\log n }{\log k} \right] \\
    &\leq \Pr\left[\sum_{t=1}^{T_2} \hat{W}_t \leq \frac{C_5\log n }{\log k} \right] \\ &\leq \Pr\left[\sum_{t=1}^{T_2} (1-\hat{W}_t) \geq T_2-\frac{C_5\log n}{\log k}\right] \\
    &\leq \Pr\left[\sum_{t=1}^{T_2}(1-\hat{W}_t) \geq \frac{C_5\log n}{\log k}\right] \\
    &\leq 2^{T_2}\left(\frac{C_1}{\sqrt{k}}\right)^{\frac{C_5\log n}{\log k}} \\
    &\leq n^{O(1/\log k)-\frac{C_5}{2}}.
\end{align*}
In the fifth inequality, we used the union bound over the choice for $\hat{W}_t$.
Note that $1-\hat{W}_t=1$ with probability $\frac{C_1}{\sqrt{k}}$.\hspace{\fill}\qedsymbol

\section{Conclusion}
In this paper we propose \fvoting as a generalization of several known voting processes.
We show that the consensus time is $O(\log n)$ for any \qmaj \fvoting on $O(n^{-1/2})$-expander graphs with balanced degree distributions.
This result extends previous works concerning voting processes on expander graphs.
Possible future direction of this work includes
\begin{enumerate}
    \item Does $O(\log n)$ worst-case consensus time holds for \qmaj \fvoting on graphs with less expansion (i.e., $\lambda=\omega(n^{-1/2})$)?
    \item Is there some relationship between \bk and \Maj?
\end{enumerate}

\section*{Acknowledgements}
This work is supported by JSPS KAKENHI Grant
Number 19J12876 and 19K20214, Japan.


\bibliographystyle{abbrv}
\bibliography{ref}
\appendix

\section{Tools}

\begin{lemma}[Lemma 3 of \cite{CRRS17}]
\label{lem:square}
Suppose that $P$ is reversible.
Then, for any $S\subseteq V$,
\begin{align*}
    \left|\sum_{v\in V}\pi(v)P(v,S)^2-\pi(S)^2\right|\leq \lambda^2\pi(S)\bigl(1-\pi(S)\bigr).
\end{align*}
\end{lemma}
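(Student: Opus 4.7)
The plan is to recast the left-hand side as a squared norm in the $\pi$-weighted inner product and then invoke the spectral gap of $P$. Define $\langle x,y\rangle_\pi \defeq \sum_v \pi(v) x_v y_v$ and $\|x\|_\pi\defeq\sqrt{\langle x,x\rangle_\pi}$. Because $P$ is reversible, $P$ is self-adjoint with respect to $\langle\cdot,\cdot\rangle_\pi$, its largest eigenvalue is $1$ with eigenvector $\mathbf{1}$, and the remaining eigenvectors span the orthogonal complement $\mathbf{1}^{\perp_\pi}$. Moreover, on $\mathbf{1}^{\perp_\pi}$ the operator norm of $P$ is exactly $\lambda$.

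The first step is to observe that $(P\mathbf{1}_S)(v)=P(v,S)$, so
\[
\sum_{v\in V}\pi(v)P(v,S)^2 \;=\; \|P\mathbf{1}_S\|_\pi^2.
\]
Decompose $\mathbf{1}_S = \pi(S)\mathbf{1} + y$ where $y\in \mathbf{1}^{\perp_\pi}$. A direct calculation gives
\[
\|y\|_\pi^2 \;=\; \|\mathbf{1}_S\|_\pi^2 - \pi(S)^2\|\mathbf{1}\|_\pi^2 \;=\; \pi(S)-\pi(S)^2 \;=\; \pi(S)\bigl(1-\pi(S)\bigr).
\]

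The second step is to apply $P$: since $P\mathbf{1}=\mathbf{1}$ and $P$ preserves $\mathbf{1}^{\perp_\pi}$,
\[
P\mathbf{1}_S \;=\; \pi(S)\mathbf{1} + Py, \qquad Py\in\mathbf{1}^{\perp_\pi}.
\]
By the Pythagorean identity in $\langle\cdot,\cdot\rangle_\pi$,
\[
\|P\mathbf{1}_S\|_\pi^2 \;=\; \pi(S)^2 + \|Py\|_\pi^2.
\]
Therefore $\sum_v \pi(v)P(v,S)^2-\pi(S)^2 = \|Py\|_\pi^2 \ge 0$, and since $\|P z\|_\pi\le \lambda\|z\|_\pi$ for every $z\in\mathbf{1}^{\perp_\pi}$, we get
\[
\|Py\|_\pi^2 \;\le\; \lambda^2 \|y\|_\pi^2 \;=\; \lambda^2\pi(S)\bigl(1-\pi(S)\bigr),
\]
which is exactly the desired inequality.

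There is essentially no obstacle here; the whole content is the identification $\sum_v\pi(v)P(v,S)^2=\|P\mathbf{1}_S\|_\pi^2$ together with the orthogonal decomposition against the constant eigenvector. An equivalent route, which I would mention only if the reader prefers edge-measure language, is to use reversibility to rewrite the sum as $\sum_{w\in S}\pi(w)P^2(w,S)$ (the ergodic flow of $P^2$ through $S$) and then apply the expander mixing lemma (\cref{lem:EML}) to $P^2$, whose nontrivial eigenvalues are bounded in absolute value by $\lambda^2$; this yields the same bound $\lambda^2\sqrt{\pi(S)^2(1-\pi(S))^2}=\lambda^2\pi(S)(1-\pi(S))$.
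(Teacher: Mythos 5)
Your proof is correct: the identification $\sum_v\pi(v)P(v,S)^2=\|P\mathbf{1}_S\|_\pi^2$, the orthogonal decomposition $\mathbf{1}_S=\pi(S)\mathbf{1}+y$ with $\|y\|_\pi^2=\pi(S)(1-\pi(S))$, and the bound $\|Py\|_\pi\le\lambda\|y\|_\pi$ on $\mathbf{1}^{\perp_\pi}$ all check out, and your observation that the quantity inside the absolute value is nonnegative disposes of both signs at once. The paper itself gives no proof of this lemma --- it is imported verbatim as Lemma~3 of \cite{CRRS17} and only used through \cref{lem:second_approx_lambda} --- so there is nothing internal to compare against; your spectral argument (equivalently, your ergodic-flow reformulation $Q_{P^2}(S,S)$ plus the expander mixing lemma applied to $P^2$) is the standard derivation and is exactly what the cited source does.
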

\begin{corollary}
\label{lem:second_approx_lambda}
Suppose that $P$ is reversible. 
Then, for any $S\subseteq V$, 
\begin{align*}
\sum_{v\in V}\pi(v)\bigl(P(v,S)-\pi(S)\bigr)^2
\leq \lambda^2\pi(S)\bigl(1-\pi(S)\bigr). 
\end{align*}
\end{corollary}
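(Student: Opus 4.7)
The plan is to reduce the corollary directly to \cref{lem:square} via a one-line algebraic identity. First I would expand the square inside the sum:
\begin{align*}
\sum_{v\in V}\pi(v)\bigl(P(v,S)-\pi(S)\bigr)^2
= \sum_{v\in V}\pi(v)P(v,S)^2 \;-\; 2\pi(S)\sum_{v\in V}\pi(v)P(v,S) \;+\; \pi(S)^2\sum_{v\in V}\pi(v).
\end{align*}
The last sum is $1$ since $\pi$ is a probability distribution. For the middle sum, I would invoke the fact that $\pi$ is stationary for $P$ (which follows from reversibility): $\sum_{v\in V}\pi(v)P(v,S) = (\pi P)(S) = \pi(S)$. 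Substituting these two identities, the right-hand side collapses to $\sum_{v\in V}\pi(v)P(v,S)^2 - \pi(S)^2$.

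Next I would note that this quantity is nonnegative (e.g. by Cauchy-Schwarz, $\pi(S)^2 = \left(\sum_v \pi(v)P(v,S)\right)^2 \leq \sum_v\pi(v)\cdot \sum_v\pi(v)P(v,S)^2$), so its absolute value equals itself. Applying \cref{lem:square} then gives the required bound $\lambda^2\pi(S)(1-\pi(S))$.

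I do not expect any real obstacle here: the corollary is essentially just the observation that the weighted variance of $v\mapsto P(v,S)$ under $\pi$ equals $\sum_v\pi(v)P(v,S)^2-\pi(S)^2$, and \cref{lem:square} already controls exactly this expression. The only subtlety worth stating explicitly is the use of stationarity $\pi P = \pi$ (a consequence of reversibility) to identify the mean with $\pi(S)$.
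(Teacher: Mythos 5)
Your proposal is correct and follows essentially the same route as the paper: expand the square, identify the mean $\sum_{v}\pi(v)P(v,S)=\pi(S)$ (the paper phrases this as $Q(V,S)=Q(S,V)=\pi(S)$, which is the same stationarity/reversibility fact), and then apply \cref{lem:square}. Your extra remark that the resulting variance is nonnegative, so the absolute value in \cref{lem:square} can be dropped, is a small point the paper leaves implicit but is correctly handled.
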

\begin{proof}
Since $Q(V,S)=Q(S,V)=\pi(S)$ for any reversible $P$ and $S\subseteq V$, we have
\begin{align*}
\sum_{v\in V}\pi(v)\bigl(P(v,S)-\pi(S)\bigr)^2
&=\sum_{v\in V}\pi(v)P(v,S)^2+\pi(S)^2-2\pi(S)Q(V,S) \\
&=\sum_{v\in V}\pi(v)P(v,S)^2-\pi(S)^2
\leq \lambda^2\pi(S)\bigl(1-\pi(S)\bigr). 
\end{align*}
Here, we invoked \cref{lem:square} in the last inequality.
\end{proof}

\begin{lemma}[The Hoeffding bound (see, e.g.,~Theorem 10.9 of \cite{Doerr18})]
\label{lem:Hoeffding}
Let $Y_1, Y_2, \ldots, Y_n$ be independent random variables.
Assume that each $Y_i$ takes values in a real interval $[a_i, b_i]$ of length $c_i\defeq b_i-a_i$.
Let $Y=\sum_{i=1}^nY_i$.  
Then, for any $\kappa>0$,
\begin{align*}
\Pr\left[Y\geq \E[Y]+\kappa \right]&\leq \exp\left(-\frac{2\kappa^2 }{\sum_{i=1}^nc_i^2}\right), \\
\Pr\left[Y\leq \E[Y]-\kappa \right]&\leq \exp\left(-\frac{2\kappa^2 }{\sum_{i=1}^nc_i^2}\right).
\end{align*}
\end{lemma}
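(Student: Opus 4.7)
The plan is to apply the standard Chernoff--Cram\'er exponential moment method. For any $t>0$, Markov's inequality applied to the non-negative random variable $\mathrm{e}^{t(Y-\E[Y])}$ yields
\begin{align*}
\Pr\left[Y\geq \E[Y]+\kappa\right] \leq \mathrm{e}^{-t\kappa}\,\E\!\left[\mathrm{e}^{t(Y-\E[Y])}\right].
\end{align*}
By independence of $Y_1,\dots,Y_n$, the moment generating function of the centered sum factorizes as
\begin{align*}
\E\!\left[\mathrm{e}^{t(Y-\E[Y])}\right] = \prod_{i=1}^n \E\!\left[\mathrm{e}^{t(Y_i-\E[Y_i])}\right].
\end{align*}

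The main step, and what I expect to be the principal technical obstacle, is the single-variable subgaussian estimate (Hoeffding's lemma): if $X$ is a random variable with $\E[X]=0$ supported in an interval of length $c$, then $\E[\mathrm{e}^{tX}]\leq \exp(t^2 c^2 / 8)$ for every $t\in\mathbb{R}$. To prove this I would use convexity of $x\mapsto \mathrm{e}^{tx}$ on $[a,b]$ (with $c=b-a$) to write $\mathrm{e}^{tx}\leq \frac{b-x}{c}\mathrm{e}^{ta}+\frac{x-a}{c}\mathrm{e}^{tb}$, take expectations (using $\E[X]=0$) to obtain a function $\varphi(t)=\log\bigl(\frac{b}{c}\mathrm{e}^{ta}-\frac{a}{c}\mathrm{e}^{tb}\bigr)$, and then verify by a direct computation that $\varphi''(s)\leq c^2/4$ uniformly in $s$; integrating twice from $0$, with $\varphi(0)=\varphi'(0)=0$, gives $\varphi(t)\leq t^2c^2/8$.

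Applying Hoeffding's lemma to each centered summand $Y_i-\E[Y_i]$ (which lies in an interval of length $c_i$) gives
\begin{align*}
\E\!\left[\mathrm{e}^{t(Y-\E[Y])}\right] \leq \exp\!\left(\frac{t^2\sum_{i=1}^n c_i^2}{8}\right),
\end{align*}
so
\begin{align*}
\Pr\left[Y\geq \E[Y]+\kappa\right] \leq \exp\!\left(-t\kappa+\frac{t^2\sum_{i=1}^n c_i^2}{8}\right).
\end{align*}
Optimizing the right-hand side over $t>0$ by choosing $t=4\kappa/\sum_{i=1}^n c_i^2$ minimizes the quadratic in the exponent and yields the stated bound $\exp\!\bigl(-2\kappa^2/\sum_{i=1}^n c_i^2\bigr)$.

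For the lower tail, I would apply the same argument to the independent random variables $-Y_1,\dots,-Y_n$, which still lie in intervals of length $c_i$; this replaces $Y$ by $-Y$ and turns $\Pr[Y\leq \E[Y]-\kappa]$ into $\Pr[-Y\geq \E[-Y]+\kappa]$, so the identical bound follows. The rest of the argument beyond Hoeffding's lemma is just routine exponential Markov plus a one-variable optimization.
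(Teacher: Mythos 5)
The paper does not prove this lemma; it is quoted verbatim from the literature (Theorem 10.9 of the cited reference), so there is no in-paper proof to compare against. Your argument is the standard and correct Chernoff--Hoeffding proof: exponential Markov plus independence, Hoeffding's single-variable lemma via convexity and the uniform bound $\varphi''(s)\leq c^2/4$, and optimization at $t=4\kappa/\sum_{i=1}^n c_i^2$, with the lower tail obtained by negation; all steps check out.
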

\begin{corollary}
\label{cor:Hoeffding_abs}
Let $Y_1, Y_2, \ldots, Y_n$ be independent random variables.
Assume that each $Y_i$ takes values in a real interval $[a_i, b_i]$ of length $c_i\defeq b_i-a_i$.
Let $Y=\sum_{i=1}^nY_i$.  
Then, for any $\kappa>0$,
\begin{align*}
\Pr\left[|Y|\geq \left|\E[Y]\right|+\kappa\right]&\leq 2\exp\left(-\frac{2\kappa^2}{\sum_{i=1}^nc_i^2}\right), \\
\Pr\left[|Y|\leq \bigl|\E[Y]\bigr|-\kappa\right] &\leq 2\exp\left(-\frac{2\kappa^2}{\sum_{i=1}^nc_i^2}\right).
\end{align*}
\end{corollary}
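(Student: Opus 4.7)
The plan is to obtain both bounds by reducing to the one-sided Hoeffding bound of \cref{lem:Hoeffding} via an elementary case analysis. First I would reduce to the case $\E[Y] \geq 0$ by symmetry: if $\E[Y] < 0$, apply the argument to $\sum_i (-Y_i)$, which is again a sum of independent random variables, each lying in an interval of the same length $c_i$, and has expectation of flipped sign but equal absolute value. Having made this reduction, $|\E[Y]|$ in the statement can be replaced by $\E[Y]$ throughout.

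For the first inequality, the approach is to split the event $\{|Y|\geq \E[Y]+\kappa\}$ as the disjoint union of the upper-tail event $\{Y \geq \E[Y]+\kappa\}$ and the lower-tail event $\{Y \leq -\E[Y]-\kappa\}$. The upper-tail event is controlled directly by \cref{lem:Hoeffding} with deviation $\kappa$, giving $\exp(-2\kappa^2/\sum_i c_i^2)$. The lower-tail event can be rewritten as $\{Y \leq \E[Y]-(2\E[Y]+\kappa)\}$; since $2\E[Y]+\kappa \geq \kappa$, the lower-tail Hoeffding bound yields the same $\exp(-2\kappa^2/\sum_i c_i^2)$ bound. A union bound then produces the factor of $2$ in the statement.

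For the second inequality, the argument is even shorter. If $\kappa > \E[Y]$ the bound is trivial because $\{|Y| \leq \E[Y]-\kappa\}$ is empty. Otherwise $\E[Y]-\kappa \geq 0$ and one has the inclusion $\{|Y|\leq \E[Y]-\kappa\} \subseteq \{Y \leq \E[Y]-\kappa\}$, so a single invocation of the lower-tail Hoeffding bound delivers $\exp(-2\kappa^2/\sum_i c_i^2)$; the factor of $2$ in the corollary is slack here.

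There is no substantive obstacle since this is a routine two-sided refinement of Hoeffding. The only bookkeeping care required is the initial sign reduction (verifying that the interval lengths $c_i$ are preserved when passing from $Y_i$ to $-Y_i$) and the estimate $(2\E[Y]+\kappa)^2 \geq \kappa^2$ used in the lower-tail piece of the first inequality.
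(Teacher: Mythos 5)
Your proof is correct, but it takes a genuinely different route from the paper's. The paper dispatches both inequalities in two lines via the reverse triangle inequality: from $|Y|-|\E[Y]|\leq |Y-\E[Y]|$ and $|\E[Y]|-|Y|\leq |\E[Y]-Y|$, each event in the corollary is contained in $\{|Y-\E[Y]|\geq \kappa\}$, and the two-sided Hoeffding bound (the sum of the two tails in \cref{lem:Hoeffding}) finishes immediately; no sign reduction or case analysis is needed. Your argument instead normalizes to $\E[Y]\geq 0$, decomposes $\{|Y|\geq \E[Y]+\kappa\}$ into the two tails $\{Y\geq \E[Y]+\kappa\}$ and $\{Y\leq \E[Y]-(2\E[Y]+\kappa)\}$, and handles the second inequality by the inclusion $\{|Y|\leq \E[Y]-\kappa\}\subseteq\{Y\leq \E[Y]-\kappa\}$ (with the case $\kappa>\E[Y]$ trivial). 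All of these steps check out: the interval lengths are preserved under $Y_i\mapsto -Y_i$, the deviation $2\E[Y]+\kappa\geq\kappa$ makes the far tail at least as small as claimed, and the union bound supplies the factor $2$. What your approach buys is slightly sharper constants — the second inequality actually holds with a factor of $1$, and the far tail in the first inequality decays like $\exp\bigl(-2(2\E[Y]+\kappa)^2/\sum_i c_i^2\bigr)$ — at the cost of more bookkeeping; the paper's version buys brevity and avoids any case split, which is all that is needed since only the stated (weaker) constants are used downstream.
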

\begin{proof}
For the first inequality, it is straightforward to see that
\begin{align*}
\Pr\left[|Y|\geq \left|\E[Y]\right|+\kappa\right]
&=\Pr\left[|Y|-\left|\E[Y]\right|\geq \kappa\right]
\leq \Pr\left[\left|Y-\E[Y]\right|\geq\kappa\right]\\
&\leq 2\exp\left(-\frac{2\kappa^2}{\sum_{i=1}^nc_i^2}\right).
\end{align*}
Note that $|x|-|y|\leq |x-y|$ for any $x,y\in \mathbb{R}$. 
Similarly, it holds that
\begin{align*}
\Pr\left[|Y|\leq \left|\E[Y]\right|-\kappa\right]
&=\Pr\left[\left|\E[Y]\right|-|Y|\geq \kappa\right]
\leq \Pr\left[\left|\E[Y]-Y\right|\geq \kappa\right] \\
&\leq 2\exp\left(-\frac{2\kappa^2}{\sum_{i=1}^nc_i^2}\right),
\end{align*}
and we obtain the claim.
\end{proof}
\begin{lemma}[Berry-Esseen theorem (see, e.g.,~\cite{Shevtsova10})]
\label{lem:Berry-Esseen}
Let $Y_1, Y_2, \ldots, Y_n$ be independent random variables such that
$\E[Y_i]=0$, $\E[Y_i^2]>0$, $\E[|Y_i|^3]<\infty$ for all $i\in [n]$, and $\sum_{i=1}^n\E[Y_i^2]=1$.
Let $Y=\sum_{i=1}^nY_i$ and 
$\Phi(x)=\frac{1}{\sqrt{2\pi}}\int_{-\infty}^x\mathrm{e}^{-y^2/2}\mathrm{d}y$ (the cumulative distribution function of the standard normal distribution).
Then
\begin{align*}
\sup_{x\in \mathbbm{R}}\bigl|\Pr\left[Y\leq x\right]-\Phi(x)\bigr|
&\leq 5.6 \sum_{i=1}^n\E[|Y_i|^3].
\end{align*}
\end{lemma}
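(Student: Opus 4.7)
The statement above is the classical Berry--Esseen theorem in its Lyapunov (non-identically-distributed) form, and the explicit numerical constant $5.6$ is taken directly from Shevtsova~\cite{Shevtsova10}. Because this appears as an external tool rather than as a contribution of the paper, my proposal is essentially to cite the reference and not reprove it. For intuition, I sketch below the classical characteristic-function derivation from which all such bounds originate.

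The plan has two ingredients. The first is Esseen's smoothing inequality, which converts a Kolmogorov distance into a characteristic-function integral: for any distribution function $F$ with characteristic function $\phi_F$, any $T>0$, and an absolute constant $c_0$,
\begin{align*}
\sup_{x\in\mathbb{R}}\bigl|F(x)-\Phi(x)\bigr| \leq \frac{1}{\pi}\int_{-T}^{T}\left|\frac{\phi_F(t)-\mathrm{e}^{-t^2/2}}{t}\right|\mathrm{d}t + \frac{c_0}{T}.
\end{align*}
Applying this to the distribution of $Y$, the task reduces to estimating $\phi_Y(t)-\mathrm{e}^{-t^2/2}$ uniformly on a suitable range of $t$.

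The second ingredient is a third-moment Taylor expansion of each factor of $\phi_Y(t)=\prod_{i=1}^n \phi_{Y_i}(t)$. Writing $\sigma_i^2=\E[Y_i^2]$ so that $\sum_i\sigma_i^2=1$, the standard bound is
\begin{align*}
\phi_{Y_i}(t) = 1 - \tfrac{1}{2}\sigma_i^2 t^2 + r_i(t),\qquad |r_i(t)| \leq \tfrac{1}{6}|t|^3\E[|Y_i|^3].
\end{align*}
Taking logarithms and summing gives $\log \phi_Y(t) = -t^2/2 + R(t)$ with $|R(t)|=O(|t|^3\rho_n)$, where $\rho_n\defeq \sum_i\E[|Y_i|^3]$, on the range $|t|\leq c\rho_n^{-1/3}$. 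Exponentiating yields a pointwise bound $|\phi_Y(t)-\mathrm{e}^{-t^2/2}|\leq C_1|t|^3\rho_n\mathrm{e}^{-t^2/4}$ on that range. Inserting this into Esseen's inequality and choosing $T$ of order $\rho_n^{-1}$ balances the two terms and produces a bound of the form $C_2\rho_n$, i.e.~the qualitative shape of the claimed estimate.

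The main obstacle, and the reason the paper cites Shevtsova rather than reproving the lemma, is the explicit numerical constant. The crude Taylor-exponentiate-integrate argument above gives a perfectly serviceable asymptotic bound but with a constant much larger than $5.6$. Obtaining the sharp value requires a careful partition of the integration range, refined bounds on the truncated contributions of each $Y_i$ to the characteristic function, and delicate algebraic optimisations of the type carried out in~\cite{Shevtsova10}. I would not attempt to redo that optimisation here; every application in the paper only needs the constant to be absolute, so the cited value $5.6$ is used as a black box.
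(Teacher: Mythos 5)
Your proposal matches the paper exactly: the Berry--Esseen theorem is stated in the appendix as an external tool with the constant $5.6$ taken from Shevtsova, and the paper offers no proof of its own, so citing the reference is precisely what is done. The characteristic-function sketch you add is a correct account of the classical route and harmless, but not needed.
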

\begin{corollary} \label{cor:BEbound_cor2}
Let $Y_1, Y_2,\ldots, Y_n$ be independent random variables, $c=(c_1,\ldots,c_n)\in\mathbb{R}^n$ be a vector, and $Y=\sum_{i=1}^nY_i$.
Suppose that, for all $i\in [n]$, $|Y_i-\E[Y_i]|\leq c_i<\infty$ and $\Var[Y]> 0$.
Let $\Phi(x)=\frac{1}{\sqrt{2\pi}}\int_{-\infty}^x\mathrm{e}^{-y^2/2}\mathrm{d}y$. 
Then, for any positive $x\in \mathbb{R}$,
\begin{align*}
\Pr\left[|Y|\leq x\sqrt{\Var[Y]}\right]
&\leq \Phi(x)+\frac{5.6\|c\|_3^3}{\Var[Y]^{3/2}}.
\end{align*}
\end{corollary}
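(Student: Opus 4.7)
}
The plan is to reduce to the Berry-Esseen theorem (\cref{lem:Berry-Esseen}) by an appropriate normalization. Let $\sigma^2 \defeq \Var[Y] > 0$ and define the centered and scaled variables $Z_i \defeq (Y_i - \E[Y_i])/\sigma$ for $i\in [n]$. Then the $Z_i$ are independent, $\E[Z_i]=0$, and $\sum_{i=1}^n \Var[Z_i] = 1$. The random variable $Z \defeq \sum_{i=1}^n Z_i = (Y-\E[Y])/\sigma$ therefore satisfies the hypotheses of \cref{lem:Berry-Esseen}. Since $|Y_i - \E[Y_i]|\leq c_i$ by assumption, we have $|Z_i|\leq c_i/\sigma$, hence $\E[|Z_i|^3] \leq c_i^3/\sigma^3$, and summing gives $\sum_{i=1}^n \E[|Z_i|^3] \leq \|c\|_3^3/\sigma^3$. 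Writing $\varepsilon \defeq 5.6\|c\|_3^3/\sigma^3$, \cref{lem:Berry-Esseen} then yields $|\Pr[Z\leq y] - \Phi(y)| \leq \varepsilon$ for every $y\in\mathbb{R}$.

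Next, the event $\{|Y|\leq x\sigma\}$ is contained in $\{Y\leq x\sigma\}$, so I would bound
\begin{align*}
\Pr\left[|Y|\leq x\sigma\right] \leq \Pr\left[Y\leq x\sigma\right] = \Pr\left[Z \leq x - \tau\right],
\end{align*}
where $\tau \defeq \E[Y]/\sigma$. The Berry-Esseen bound gives $\Pr[Z\leq x-\tau] \leq \Phi(x-\tau) + \varepsilon$. To convert this into $\Phi(x)+\varepsilon$, I would invoke a standard symmetrization trick: apply the argument to $-Y = \sum_i (-Y_i)$ in place of $Y$ (note that $|-Y_i - \E[-Y_i]|=|Y_i-\E[Y_i]| \leq c_i$ and $\Var[-Y]=\Var[Y]$, so the hypotheses are preserved), which replaces $\tau$ by $-\tau$. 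Thus without loss of generality we may assume $\tau \geq 0$, and then $\Phi(x-\tau)\leq \Phi(x)$ by monotonicity, completing the proof.

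There is no genuine obstacle here; the only slightly delicate point is handling the possibly non-zero mean of $Y$, which is why the one-sided bound combined with the sign-flip $Y \mapsto -Y$ is crucial. A naive two-sided argument yields only $2\Phi(x)-1+2\varepsilon$, which is weaker than the claimed $\Phi(x)+\varepsilon$; sticking to the one-sided inequality $\{|Y|\leq x\sigma\}\subseteq \{Y\leq x\sigma\}$ is what gives the clean single-$\varepsilon$ constant.
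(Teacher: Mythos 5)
Your proof is correct and follows essentially the same route as the paper: normalize to $Z=(Y-\E[Y])/\sqrt{\Var[Y]}$, bound the third absolute moments by $c_i^3/\Var[Y]^{3/2}$, apply \cref{lem:Berry-Esseen}, and dispose of the absolute value by exploiting the sign of $\E[Y]$ (the paper lower-bounds $\Pr[|Y|\geq x\sqrt{\Var[Y]}]$ with a case split on the sign of $\E[Y]$, which is just the complementary view of your one-sided inclusion plus sign flip). The only detail you gloss over is that \cref{lem:Berry-Esseen} as stated requires $\E[Z_i^2]>0$ for every $i$, so any deterministic summands must first be discarded from the sum defining $Z$ — the paper does this explicitly, and it changes nothing else in the argument.
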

\begin{proof}
For each $i\in [n]$, let
\begin{align*}
Z_i\defeq \frac{Y_i-\E[Y_i]}{\sqrt{\Var[Y]}}, \hspace{1em}
Z\defeq \sum_{\substack{i\in [n]: \E[Z_i^2]>0}}Z_i
=\sum_{i\in [n]}Z_i.
\end{align*}
Note that 
$\E[Z_i^2]=0
\iff \sum_{z}z^2\Pr[Z_i=z]=0
\iff \Pr[Z_i=0]=1$.
For all $i\in \{j\in [n]: \E[Z_j^2]> 0\}$, 
it is easy to check that
$\E[Z_i]=0$, 
$\E[Z_i^2]>0$, and 
$\E[|Z_i|^3]\leq \frac{c_i^3}{\Var[Y]^{3/2}}<\infty$.
Furthermore,  
\begin{align*}
\sum_{\substack{i\in [n]:\E[Z_i^2]>0}}\E[Z_i^2]
&=\sum_{i\in [n]}\E[Z_i^2]
=\frac{\sum_{i\in [n]}\E[(Y_i-\E[Y_i])^2]}{\Var[Y]}=1.
\end{align*}
Thus, we can apply \cref{lem:Berry-Esseen} to $Z$ and it holds that
\begin{align}
\left|\Pr\left[\frac{Y-\E[Y]}{\sqrt{\Var[Y]}} \leq x\right]-\Phi(x)\right|
&=\left|\Pr\left[\sum_{i=1}^n Z_i \leq x\right]-\Phi(x)\right| 
=\left|\Pr\left[Z \leq x\right]-\Phi(x)\right| \nonumber\\
&\leq 5.6 \sum_{i\in[n]:\E[Z_i^2]>0}\E[|Z_i|^3] \nonumber\\
&\leq 5.6 \sum_{i=1}^n\frac{c_i^3}{\Var[Y]^{3/2}}
=\frac{5.6\|c\|_3^3}{\Var[Y]^{3/2}}.
\label{eq:BEbound_cor}
\end{align}
Next we observe that
\begin{align}
\Pr\left[|Y|\geq x\sqrt{\Var[Y]}\right]
&=\Pr\left[Y\geq x\sqrt{\Var[Y]}\right]+\Pr\left[Y\leq -x\sqrt{\Var[Y]}\right]
\label{eq:corBE1}
\end{align}
holds. If $\E[Y]\geq 0$, we have
\begin{align*}
\Pr\left[|Y|\geq x\sqrt{\Var[Y]}\right]
&\geq \Pr\left[Y\geq x\sqrt{\Var[Y]}+\E[Y]\right] \\
&\geq 1-\Pr\left[Y-\E[Y]\leq x\sqrt{\Var[Y]}\right]
\geq 1-\Phi(x)-\frac{5.6\|c\|_3^3}{\Var[Y]^{3/2}} 
\end{align*}
from \cref{eq:BEbound_cor}. 
Similarly, if $\E[Y]\leq 0$, \cref{eq:BEbound_cor} yields
\begin{align*}
\Pr\left[|Y|\geq x\sqrt{\Var[Y]}\right]
&\geq 
\Pr\left[Y\leq -x\sqrt{\Var[Y]}+\E[Y]\right]\nonumber \\
&= \Pr\left[Y-\E[Y]\leq -x\sqrt{\Var[Y]}\right]
\geq \Phi(-x)-\frac{5.6\|c\|_3^3}{\Var[Y]^{3/2}}. 
\end{align*}
Thus, the claim holds for both cases. 
Note that $\Phi(-x)=1-\Phi(x)$ holds.
\end{proof}
\end{document}